\newcommand{\C}{\mathbb{C}}
\newcommand{\Z}{\mathbb{Z}}
\newcommand{\N}{\mathbb{N}}
\newcommand{\Q}{\mathbb{Q}}
\newcommand{\K}{\mathbb{K}}
\newcommand{\Aut}[1]{\text{Aut}(#1)}
\newcommand{\id}[1]{\operatorname{id}_{#1}}
\newcommand{\obj}[1]{\operatorname{obj}(#1)}
\newcommand{\EV}[1]{\operatorname{EV}(#1)}
\newcommand{\cE}{\mathcal{E}}
\newcommand{\cG}{\mathcal{G}}
\newcommand{\Eig}[2]{\text{Eig}(#1,#2)}
\newcommand{\Endo}[1]{\text{End}(#1)}
\newcommand{\Vfin}{\mathcal{V}^{{\rm fin}}_{\C}}
\newcommand{\Viso}{\mathcal{V}^{{\rm iso}}_{\C}}
\newcommand{\BUpl}{\mathcal{B}U_{\oplus}}
\newcommand{\BUte}[1]{\mathcal{B}U_{\otimes}^{#1}}
\newcommand{\Zloc}[1]{\mathbb{Z}\!\left[\tfrac{1}{#1}\right]}
\newcommand{\BUloc}[1]{BU_{\otimes}\!\left[\tfrac{1}{#1}\right]}
\newcommand{\funeq}{\sim_f}
\newcommand{\moneq}{\sim_{\otimes}}
\newcommand{\I}{\mathcal{I}}
\newcommand{\Bte}[1]{\mathcal{B}^{#1}_{\otimes}}
\newcommand{\Proj}[2]{\mathcal{P}_{#1}(#2)}
\newcommand*\circled[1]{\tikz[baseline=(char.base)]{
            \node[shape=circle,draw,inner sep=0.7pt] (char) {#1};}}
\newtheorem{theorem}{Theorem}[section]
\newtheorem{lemma}[theorem]{Lemma}
\newtheorem{corollary}[theorem]{Corollary}
\theoremstyle{definition}
\newtheorem{definition}[theorem]{Definition}
\newtheorem{remark}[theorem]{Remark}
\DeclareMathOperator{\hocolim}{hocolim}
\DeclareMathOperator{\ch}{ch}
\newcommand{\extp}{\@ifnextchar^\@extp{\@extp^{\,}}}
\def\@extp^#1{\mathop{\bigwedge\nolimits^{\!#1}}}
\begin{document}
\title{Exponential Functors, $R$-matrices and twists}
\author{Ulrich Pennig}
\begin{abstract}
	In this paper we show that each polynomial exponential functor on complex finite-dimensional inner product spaces is defined up to equivalence of monoidal functors by an involutive solution to the Yang-Baxter equation (an involutive $R$-matrix), which determines an extremal character on $S_{\infty}$. These characters are classified by Thoma parameters, and Thoma parameters resulting from polynomial exponential functors are of a special kind. Moreover, we show that each $R$-matrix with Thoma parameters of this kind yield a corresponding polynomial exponential functor.
	
	In the second part of the paper we use these functors to construct a higher twist over $SU(n)$ for a localisation of $K$-theory that generalises the one classified by the basic gerbe. We compute the indecomposable part of the rational characteristic classes of these twists in terms of the Thoma parameters of their $R$-matrices.
\end{abstract}

\maketitle

\section{Introduction}
This paper consists of two parts: In the first part we study exponential functors on the category $\Vfin$ of complex finite-dimensional inner product spaces. Such a functor is defined to be monoidal with respect to the direct sum on the domain and the tensor product on the codomain, i.e.\ it comes with a natural isomorphism 
\[	
	\tau_{V,W} \colon F(V \oplus W) \to F(V) \otimes F(W)
\]
and a corresponding unit isomorphism $\iota \colon F(0) \to \C$, such that associativity and unitality conditions hold (see Def.~\ref{def:exp_functor}). A classical example of an exponential functor $F \colon \Vfin \to \Vfin$ is the full exterior algebra $V \mapsto \extp^*V$, which is also a polynomial functor in the sense of \cite[App.~A]{book:Macdonald}. If the domain is restricted to the subgroupoid $\Viso \subset \Vfin$, then the top exterior power $V \mapsto \extp^{\rm top}V$ provides another example. 

We employ the theory of polynomial functors developed by Macdonald in \cite[App.~A]{book:Macdonald} to show that isomorphism classes of polynomial exponential functors $\Vfin \to \Vfin$ correspond to equivalence classes of involutive solutions to the Yang-Baxter equation. On $W \in \obj{\Vfin}$ this equation asks for linear transformations $R \colon W^{\otimes 2} \to W^{\otimes 2}$  that satisfy
\[
	(R \otimes \id{W})(\id{W} \otimes R)(R \otimes \id{W}) = (\id{W} \otimes R)(R \otimes \id{W})(\id{W} \otimes R)
\]
on $W^{\otimes 3}$. Solutions to this equation are called $R$-matrices. If they are involutive (i.e.\ if they additionally satisfy $R^2 = \id{W}$), then they give rise to representations of all symmetric groups $S_n$ and to extremal characters of the infinite symmetric group $S_{\infty}$. The group $S_{\infty}$ has infinite conjugacy classes and is therefore not type~I. Nevertheless, its extremal characters were fully classified by Thoma in \cite{paper:Thoma} (see also \cite[Sec.~2.3]{paper:Okounkov}) by the Thoma parameters $((\alpha_i)_{i \in \N},(\beta_j)_{j \in \N})$. These are two sequences of positive real numbers lying in the Thoma simplex $\mathbb{T}$ (see Sec.~\ref{sec:Thoma_par}). 

The characters and their Thoma parameters corresponding to $R$-matrices were identified in \cite{preprint:LechnerPennigWood} using subfactor theory. In the current paper we show that each polynomial exponential functor $F$ gives rise to an $R$-matrix acting on the linearisation of $F$, which characterises it up to isomorphism. We also show that the tensor product of exponential functors is linked to the box sum of $R$-matrices defined in \cite[Def.~4.1]{preprint:LechnerPennigWood}. It turns out to be more convenient to work with Thoma parameters rescaled by the dimension of the representation as in \cite[(4.13)]{preprint:LechnerPennigWood}. We prove:
\begin{theorem}
	Let $F \colon \Vfin \to \Vfin$ be a polynomial exponential functor. Let $(F_n)_{n \in \N}$ be its homogeneous summands. Let $W = F_1(\C)$. Then there is an involutive solution $R \colon W^{\otimes 2} \to W^{\otimes 2}$ to the Yang-Baxter equation 	associated to $F$ called the $R$-matrix of $F$. It has the following properties: 
	\begin{enumerate}[a)]
		\item The $R$-matrix determines $F$ up to natural monoidal equivalence (Def.~\ref{def:iso_and_str_iso}) of exponential functors (Lem.~\ref{lem:F_R_moneq}).
		\item The rescaled Thoma parameters of $R$ are of the form $(0, b)$ for a finite sequence of non-negative integers $b = (b_1, \dots, b_m)$ (Lemma~\ref{lem:Thoma_parameters}).
		\item For each involutive $R$-matrix $R \colon W^{\otimes 2} \to W^{\otimes 2}$ with rescaled Thoma parameters $(0,(b_1, \dots, b_m))$ for non-negative integers $b_i$, there exists a polynomial exponential functor $F^R$ such that its $R$-matrix is $R$ (Thm.~\ref{thm:classification_part_2}).
		\item The rescaled Thoma parameters determine $F$ up to natural equivalence of functors (i.e.\ neglecting the monoidal transformation, Def.~\ref{def:iso_and_str_iso}) and for any choice of parameters as in b) there exists an exponential functor with these parameters (Lem.~\ref{lem:F_R_equivalences}, \cite[Thm.~4.8]{preprint:LechnerPennigWood} and Thm.~\ref{thm:classification_part_1}). 
		\item The $R$-matrix of the tensor product $F \otimes G$ of two exponential functors $F$ and $G$ with associated $R$-matrices $R$ and $S$, respectively, is given by the box-sum $R \boxplus S$ (Thm.~\ref{thm:box_sum}). 
	\end{enumerate}
\end{theorem}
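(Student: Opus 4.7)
Since the statement collects results proved across the paper, my plan would be to handle the five parts in turn. For part (a), extracting $R$: by Macdonald's theory the polynomial functor $F$ splits into homogeneous summands $F_n$, and the monoidal isomorphism $\tau_{V,W}$ should respect the grading, producing natural maps $F_n(V \oplus W) \cong \bigoplus_{p+q=n} F_p(V)\otimes F_q(W)$. Taking $V=W=\C$ and picking out the $(p,q)=(1,1)$ component identifies a canonical copy of $W\otimes W$ inside $F_2(\C^2)$; applying $F$ to the coordinate flip of $\C\oplus\C$ and conjugating by $\tau$ yields an endomorphism $R$ of $W\otimes W$. Expanding the coherence diagram for $\tau$ on $F(\C^{\oplus 3})$, the three flips that generate the symmetric group on three letters should translate into the Yang--Baxter equation for $R$, while involutivity of each flip gives $R^2 = \id{W\otimes W}$. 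Uniqueness of $F$ up to natural monoidal equivalence is then proved by reconstructing each $F_n$ as the image of the $R$-symmetrisers on $W^{\otimes n}$, so that no data beyond $R$ is needed to rebuild $F$.

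For parts (b), (c) and (d), the strategy is to pass through the extremal character of $S_{\infty}$ associated to $R$ via \cite{preprint:LechnerPennigWood}. The $R$-braid group representations factor through $S_n$-actions on $W^{\otimes n}$, and the normalised traces assemble to an extremal character whose Thoma parameters classify the isotypic decomposition. For (b) I would combine the resulting dimension formulas with the polynomial growth of $\dim F_n(\C)$ forced by $F$ being polynomial: this should rule out nonzero $\alpha_i$ and restrict the $\beta_j$ to a finite sequence of non-negative integers. For (c), given such an $R$, I would define $F^R_n(V) := (W^{\otimes n}\otimes V^{\otimes n})^{S_n}$, with $S_n$ acting on the first factor via the $R$-braiding (which makes sense because $R^2=\id{W^{\otimes 2}}$) and diagonally on the second via the flip; the direct sum $F^R = \bigoplus_n F^R_n$ is then polynomial, the monoidal structure comes from the canonical factorisation of tensor powers across a direct sum, and by construction its $R$-matrix is $R$. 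Part (d) then follows since Thoma's theorem determines the character, hence the $S_n$-module $F_n(\C)$, and polynomial functor theory recovers $F_n$ up to natural (non-monoidal) equivalence from this data.

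For part (e), I would first compute $(F\otimes G)_1(\C) = F_1(\C) \oplus G_1(\C) = W \oplus W'$ and then unpack the monoidal structure $\tau^{F\otimes G}$ as the composite of $\tau^F\otimes\tau^G$ with the evident braiding of the middle tensor factors. Extracting the induced map on $(W\oplus W')^{\otimes 2}$ by the same procedure as in part (a) should yield exactly the box-sum $R\boxplus S$ from \cite[Def.~4.1]{preprint:LechnerPennigWood}.

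The main obstacle I expect is part (b): translating polynomiality of $F$ into the precise constraints on the Thoma parameters requires delicate control over how the degree filtration interacts with the asymptotics of the extremal character, and pinning down why the $\alpha$-sequence must vanish identically and why the $\beta$-sequence must consist of finitely many integers, rather than arbitrary non-negative reals, is the most subtle point of the whole theorem.
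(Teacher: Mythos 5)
Your overall architecture matches the paper's: extract $R$ from the $S_2$-action on the linearisation of $F_2$, pass through the $S_\infty$-character to Thoma parameters, reconstruct $F$ from invariants of $S_n$ acting on $(W\otimes V)^{\otimes n}$, and read off the box-sum for tensor products from the degree-$(1,1)$ piece of $(F\otimes G)_2$. That is exactly what the paper does in Lemmas~\ref{lem:linearization}, \ref{lem:R_rep_of_Sn}, and Theorems~\ref{thm:poly_func_classification}, \ref{thm:box_sum}, \ref{thm:classification_part_2}.

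There is, however, a genuine gap in part (b), and it is precisely at the point you flagged as the subtle one. The mechanism is not a growth estimate. The key fact is much cruder but sharper: since $F(\C)$ is a finite-dimensional object of $\Vfin$, the homogeneous pieces satisfy $F_m(\C)=0$ for all $m\geq N$. By Theorem~\ref{thm:poly_func_classification}, $\dim F_m(\C)$ equals the multiplicity $\langle \iota_\C,\rho^{(m)}_R\rangle$ of the trivial $S_m$-representation in $\rho^{(m)}_R$, so this multiplicity vanishes for $m\geq N$. The paper then invokes \cite[Prop.~5.7]{preprint:LechnerPennigWood}: $\langle\lambda,\rho^{(m)}_R\rangle=0$ if and only if the Young diagram of $\lambda$ contains a rectangle of height $\ell(\alpha)+1$ and width $\ell(\beta)+1$. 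The trivial representation's diagram is a single row of height $1$, which can only contain such a rectangle if $\ell(\alpha)+1\leq 1$, i.e.\ $\ell(\alpha)=0$. No asymptotic reasoning about ``polynomial growth'' enters, and I do not see how a growth bound alone would deliver $\alpha\equiv 0$; you need the combinatorial vanishing criterion. A secondary omission is in part (c): you assert that the monoidal structure on $F^R$ ``comes from the canonical factorisation of tensor powers across a direct sum,'' but the averaged coset-sum map $\bigoplus_{i+j=n}F^R_i(V_1)\otimes F^R_j(V_2)\to F^R_n(V_1\oplus V_2)$ is only obviously injective; its surjectivity (Lemma~\ref{lem:tau_surjective}) requires the binomial-coefficient dimension count, which is again exactly where the hypothesis $\alpha=0$ and integrality of the $b_i$ from part~(b) is consumed. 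As written, your sketch of (c) does not use these hypotheses and so cannot be complete.
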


Our motivation for viewing exponential functors from this new angle will become clear in the second part of the paper, where they yield the input datum for a construction of higher twists over $SU(n)$ in a localisation of $K$-theory generalising the twist given by the basic gerbe \cite{paper:Meinrenken, paper:MurrayStevenson}. 

Twisted $K$-theory first appeared in \cite{paper:DonovanKaroubi} and is the $K$-theoretic analogue of ordinary cohomology with local coefficients. It associates $K$-groups to a pair consisting of a topological space $X$ and an extra structure over $X$, called a twist, which plays the role of a coefficient bundle. Up to isomorphism the twists in \cite{paper:DonovanKaroubi} were classified by $H^1(X,\Z/2\Z) \times {\rm Tor}(H^3(X,\Z))$. This was generalised in \cite{paper:AtiyahSegal2004, paper:AtiyahSegal2006} to include non-torsion elements in $H^3(X,\Z)$. These twists have several geometric interpretations, e.g.\ in terms of bundles of compact operators as in \cite{paper:DonovanKaroubi, paper:AtiyahSegal2004, paper:AtiyahSegal2006} or in terms of (bundle) gerbes \cite{paper:BCMMS, paper:MurrayStevenson, paper:Meinrenken, paper:Waldorf}, which can be viewed as generalisations of line bundles.

Twists over Lie groups gained increasing importance in the subject: By results of Freed, Hopkins and Teleman the equivariant twisted $K$-theory of the group with respect to multiples of the basic gerbe is related to the representation theory of the corresponding free loop group \cite{paper:FreedHopkinsTelemanI, paper:FreedHopkinsTelemanII, paper:FreedHopkinsTelemanIII}. This result is also closely connected to the modular invariants of rational conformal field theories associated to the loop groups \cite{paper:EvansGannon}.

Looking at the case of non-equivariant twists from a homotopy theoretic viewpoint the appearance of $H^1(X,\Z/2\Z) \times H^3(X,\Z)$ is not a surprise. In fact, any $E_{\infty}$-ring spectrum $R$ has a spectrum of units $gl_1(R)$ and the first delooping $BGL_1(R)$ of its underlying infinite loop space classifies the twists of $R$-theory \cite{paper:AndoBlumbergGepnerHopkinsRezk}. For the complex $K$-theory spectrum $KU$ we obtain an equivalence of spaces
\[
	BGL_1(KU) \simeq K(\Z/2\Z,1) \times BBU_{\otimes}\ ,
\]
where the second factor is the delooping of the classifying space of virtual one-dimensional vector bundles with respect to the tensor product. The twists described above are the ones that factor through the natural map from $K(\Z/2\Z,1) \times BBU(1)$ to $BGL_1(KU)$. For example, the basic gerbe represents a generator of $H^3(SU(n),\Z) \cong [SU(n), BBU(1)] \cong \Z$. 

The goal in the second part of this paper is to study a generalisation of this twist. The input datum is an exponential functor $F \colon \Viso \to \Viso$ on the groupoid $\Viso \subset \Vfin$ containing only the unitary isomorphisms as morphisms. The functor $F$ induces an $A_{\infty}$-map $BU_{\oplus} \to \BUloc{d}$ for $d = \dim(F(\C))$, which deloops to a map $BBU_{\oplus} \to B\BUloc{d}$. This result will be proven via monoidal categories in Sec.~\ref{sec:twists_via_mon_cats}. Another model for $B\BUloc{d}$ using $\I$-spaces is given in an appendix. The twists $\tau^n_F$ over $SU(n)$ are then defined as the following compositions
\[
	\tau^n_F \colon SU(n) \to SU \simeq BBU_{\oplus} \to B\BUloc{d}\ .
\]
where $SU(n) \to SU$ is the colimit inclusion and $SU \simeq BBU_{\oplus}$ is a consequence of Bott periodicity. The case of the basic gerbe corresponds to the choice $F = \extp^{\rm top}$. For this functor $d = \dim(F(\C)) = 1$ and we will see in Thm.~\ref{thm:class_of_classical_twist} that it represents a generator of $H^3(SU(n),\Z) \subset H^3(SU(n),\Q)$. This should be compared with the construction of the basic gerbe over $U(n)$ given in \cite{paper:MurrayStevenson}. A similar approach to twists of $K_G(X)[[t]]$ based on exponential maps has been used in \cite[(3.12)]{paper:Teleman}. Instead of an extension by a formal variable $t$ the price to be paid in our approach is that the result will be a twist of the localisation $KU\!\left[\tfrac{1}{d}\right]$. 

Even though the generalised cohomology theory defined by the infinite loop space $BBU_{\otimes}$ has some subtle features over the integers, its rationalisation $\left(BBU_{\otimes}\right)_{\Q}$ is well-understood. The logarithm composed with the first delooping of the Chern character induces an  equivalence
\[
	\left(BBU_{\otimes}\right)_{\Q} \simeq \prod_{k \in \N} K(\Q,2k+1)\ .
\]
If we apply the above equivalence to the composition of $\tau^n_F$ with the canonical map $B\BUloc{d} \to \left(BBU_{\otimes}\right)_{\Q}$ we obtain odd degree rational characteristic classes
\[
	\delta^{F,n} = \delta^{F,n}_3 + \delta^{F,n}_5 + \dots \quad \in H^{\text{odd}}(SU(n),\Q) \cong \Lambda_{\Q}^*[a_3,a_5,\dots, a_{2n-1}]\ .
\]
Suppose that $\delta^{F,n} = \sum_{i=1}^{n-1} \kappa_i a_{2i+1} + r$, where $r$ denotes the remainder consisting of all decomposable terms. If $F \colon \Vfin \to \Vfin$ is a polynomial exponential functor, then $\kappa_i$ can be expressed in terms of the Thoma parameters $(b_1, \dots, b_m)$ associated to the $R$-matrix of $F$. More precisely, it is shown in Thm.~\ref{thm:char_classes} that $\kappa_i$ is the $i$th coefficient in the Taylor expansion of the function
\[
	\kappa(x) = \sum_{j=1}^m \log \left( \frac{1+b_j e^x}{1+b_j} \right)\ .
\]
In particular, it follows that $\tau^n_F$ in general does not factor through the classifying space of classical twists and therefore is indeed a higher twists. 

The twisted $K$-groups for $\tau^n_F \colon SU(n) \to B\BUloc{d}$ can now be obtained by applying \cite[Def.~2.27]{paper:AndoBlumbergGepnerHopkinsRezkII} to $R = KU\left[\tfrac{1}{d}\right]$. In joint work with Dadarlat the author also developed an operator algebraic model for twists of this type in \cite{paper:DadarlatPennigI, paper:DadarlatPennigII, paper:Pennig}. To explain how the current paper fits together with these ideas and how exponential functors come into the game we will end with an outlook on how to represent the twists $\tau^n_F$ via Fell bundles. This approach is a generalisation of the construction in \cite[Rem.~4.4]{paper:Pennig} used to describe twists over suspensions. Fell bundles originated in the theory of $C^*$-dynamical systems and can be seen as generalised bundle gerbes with $C^*$-correspondences replacing lines. A full description of this construction can be found in \cite{paper:EvansPennigFellBundles}. One of its appealing features is that it is equivariant with respect to the adjoint action of $SU(n)$ on itself and therefore provides an equivariant higher twist over $SU(n)$ generalising the one used in \cite{paper:FreedHopkinsTelemanI, paper:FreedHopkinsTelemanII, paper:FreedHopkinsTelemanIII}. 

\vspace{2mm}
\paragraph{\bf Acknowledgements} The author would like to thank David Evans for several enlightening discussions about twisted $K$-theory of compact Lie groups. Part of this work was completed while the author was staying at the Newton Institute for the programme ``Operator algebras: subfactors and their applications''. The author would like to thank the organisers of the programme for the invitation and the Newton Institute for its hospitality. His research was supported in part by the EPSRC grant EP/K032208/1. 

\section{Exponential Functors on $\Vfin$}
Let $\Vfin$ be the category of finite-dimensional complex inner product vector spaces and linear maps as morphisms. This is a monoidal category in two ways: The direct sum $\oplus$ with the zero vector space as its unit object provides the first structure, the tensor product $\otimes$  and $\C$ as its unit object the second. Note in particular that both, the direct sum and the tensor product, can be equipped with inner products in a canonical way. Both monoidal structures can be extended to symmetric monoidal ones using the canonical isomorphisms $V \oplus W \to W \oplus V$ and $V \otimes W \to W \otimes V$ respectively. However, we will ignore these for now. 
\begin{definition} \label{def:exp_functor}
	A triple $(F, \tau, \iota)$ consisting of a functor $F \colon \Vfin \to \Vfin$ and natural unitary isomorphisms $\tau_{V,W} \colon F(V \oplus W) \to F(V) \otimes F(W)$ and $\iota \colon F(0) \to \C$ is called a \emph{(unitary) exponential functor} if the following conditions hold:
	\begin{enumerate}[a)]
		\item $F$ preserves adjoints, i.e.\ for all $f \colon V \to W$ we have $F(f^*) = F(f)^*$.
		\item The transformation $\tau$ is associative in the sense that the following diagram commutes:
		\begin{center}
		\begin{tikzcd}[column sep=2.4cm]
			F(V \oplus W \oplus X) \ar[r,"\tau_{V, W \oplus X}"] \ar[d,"\tau_{V \oplus W,X}"] & F(V) \otimes F(W \oplus X) \ar[d,"\id{F(V)} \otimes \tau_{W,X}"] \\
			F(V \oplus W) \otimes F(X) \ar[r,"\tau_{V,W} \otimes \id{F(X)}"] & F(V) \otimes F(W) \otimes F(X)
		\end{tikzcd}
		\end{center}
		where we neglected the associators in $(\Vfin,\oplus)$ and $(\Vfin,\otimes)$ in the notation.
		\item The transformation $\iota$ makes the following diagrams commute
		\begin{center}
		\begin{tikzcd}[column sep=1cm]
			F(V \oplus 0) \ar[d] \ar[r,"\tau_{V,0}"] & F(V) \otimes F(0) \ar[d,"\id{F(V)} \otimes \iota"] \\ 
			F(V) & \ar[l] F(V) \otimes \C
		\end{tikzcd}
		\begin{tikzcd}[column sep=1cm]
			F(0 \oplus V) \ar[d] \ar[r,"\tau_{0,V}"] & F(0) \otimes F(V) \ar[d," \iota \otimes \id{F(V)}"] \\ 
			F(V) & \ar[l] \C \otimes F(V)
		\end{tikzcd}
		\end{center}
		in which the unlabelled horizontal arrows are the canonical isomorphisms and the unlabelled vertical isomorphisms are the ones induced by the canonical maps $V \oplus 0 \to V$ and $0 \oplus V \to V$.
	\end{enumerate}
	Equivalently, $F$ is a unitary monoidal functor 
	\(
		F \colon (\Vfin,\oplus) \to (\Vfin,\otimes).
	\)
\end{definition}

The next definition introduces two ways of comparing exponential functors, both of which will become important in the following. 

\begin{definition} \label{def:iso_and_str_iso}
Let $F, G \colon \Vfin \to \Vfin$ be exponential functors. We say that $F$ and $G$ are \emph{naturally (unitarily) isomorphic as functors} if there exists a natural equivalence given by a unitary isomorphism $v(X) \colon F(X) \to G(X)$. This will be denoted by $F \funeq G$

We will say that $F$ and $G$ are \emph{monoidally isomorphic} or \emph{naturally isomorphic as monoidal functors} if there exists a natural equivalence $v(X)$ as above which additionally makes the following diagram commute:
	\[
		\begin{tikzcd}[column sep=2cm]
			F(V \oplus W) \ar[r,"v(V \oplus W)"] \ar[d,"\tau^F_{V,W}" left] & G(V \oplus W) \ar[d,"\tau^G_{V,W}"]\\
			F(V) \otimes F(W) \ar[r,"v(V) \otimes v(W)" below] & G(V) \otimes G(W) 
		\end{tikzcd}
	\]
	We will denote this by $F \moneq G$.
\end{definition}

The following definition is taken from \cite[Appendix~A, Def.~1.1]{book:Macdonald}.

\begin{definition}
	A functor $F \colon \Vfin \to \Vfin$ is called \emph{polynomial} if for each pair of vector spaces $V,W$ and $n$-tuple of linear maps $f_1, \dots, f_n \colon V \to W$ the linear transformation
	\[
		F(\lambda_1f_1 + \dots + \lambda_nf_n) \colon F(V) \to F(W)
	\]
	is a polynomial in the variables $\lambda_1, \dots, \lambda_n \in \C$ with coefficients in the vector space of linear maps $\hom(F(V),F(W))$ (depending on $f_1, \dots, f_n$). If $F(\lambda_1f_1 + \dots + \lambda_nf_n)$ is homogeneous of degree $n$ for all choices of $f_1, \dots, f_n$, then $F$ is called \emph{homogeneous} of degree $n$ as well.
\end{definition}

It is shown in \cite[Appendix~A, Sec.~2]{book:Macdonald} that any polynomial functor decomposes into a direct sum of homogeneous functors. We briefly recall how this is done: By assumption the image of the multiplication operator $\lambda\,\id{V} \colon V \to V$ under $F$ can be decomposed as follows
\[
	F(\lambda\,\id{V}) = \sum_{n \in \N_0} u_n(V) \lambda^n
\]
with only finitely many nonzero coefficients $u_n(V)$. The functor properties imply that the linear maps $u_n(V) \in \Endo{F(V)}$ are idempotents, which add up to $\id{F(V)}$ and satisfy $u_n(V) u_m(V) = 0$ for $n \neq m$. Let $F_n(V)$ be the image of $u_n(V)$. Then we have a direct sum decomposition 
\[
	F(V) = \bigoplus_{n \in \N_0} F_n(V)\ .
\]
Let $f \colon V \to W$ be a linear transformation. Linearity implies $f \circ (\lambda\, \id{V}) = (\lambda\,\id{W}) \circ f$, which yields $F(f)u_n(V) = u_n(W)F(f)$. Thus, the association $V \mapsto F(V)$ restricts to a collection of functors $V \mapsto F_n(V)$, in which $F_n$ is homogeneous of degree $n$, and this collection satisfies
\[
	F \cong \bigoplus_{n \in \N_0} F_n\ ,
\]
where the natural isomorphism is induced by the inclusion and projection maps $F_n(V) \to F(V)$ and $F(V) \to F_n(V)$.

If $F$ preserves adjoints, then the decomposition of $F$ into its homogeneous components $F_n$ is orthogonal, since $F(f^*) = F(f)^*$ implies 
\[
	F(\lambda\,\id{V}) = F(\overline{\lambda}\,\id{V})^* = \sum_{n \in \N_0} \left(u_n(V) \overline{\lambda}^n\right)^* = \sum_{n \in \N_0} u_n(V)^* \lambda^n\ ,
\]
which yields that $u_n(V) = u_n(V)^* = u_n(V)^2$ are orthogonal projections for all $n \in \N_0$ and $V \in \obj{\Vfin}$.

\begin{lemma} \label{lem:exp-homogeneous}
	Let $F \colon \Vfin \to \Vfin$ be a polynomial unitary exponential functor and let $(F_n)_{n \in \N_0}$ be its decomposition into homogeneous components as explained above. Then the isomorphisms $\tau_{V,W}$ restrict to natural unitary isomorphisms 
	\[
		\tau_{V,W}^n \colon F_n(V \oplus W) \to \bigoplus_{i+j = n} F_i(V) \otimes F_j(W)\ .
	\]
\end{lemma}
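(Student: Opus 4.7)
The plan is to compare the two sides of the naturality square for $\tau$ applied to the scalar endomorphism $\lambda\,\id{V \oplus W} = \lambda\,\id{V} \oplus \lambda\,\id{W}$, and to read off the statement coefficient by coefficient in $\lambda$. Concretely, naturality of $\tau_{V,W}$ gives
\[
	\tau_{V,W} \circ F(\lambda\,\id{V \oplus W}) = \bigl(F(\lambda\,\id{V}) \otimes F(\lambda\,\id{W})\bigr) \circ \tau_{V,W}.
\]
Expanding both sides as polynomials in $\lambda$ using the decomposition $F(\lambda\,\id{X}) = \sum_n u_n(X) \lambda^n$, and exploiting bilinearity of $\otimes$ on the right, the equation becomes
\[
	\tau_{V,W} \circ \sum_{n} u_n(V \oplus W)\,\lambda^n
	= \sum_{n} \Bigl(\sum_{i+j = n} u_i(V) \otimes u_j(W)\Bigr) \circ \tau_{V,W} \cdot \lambda^n.
\]
Comparing coefficients of $\lambda^n$ yields the intertwining relation
\[
	\tau_{V,W} \circ u_n(V \oplus W) = \Bigl(\sum_{i+j = n} u_i(V) \otimes u_j(W)\Bigr) \circ \tau_{V,W}.
\]

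The second step is to extract the lemma from this identity. Since $F$ preserves adjoints, each $u_n(V \oplus W)$ is an orthogonal projection with image $F_n(V \oplus W)$, and similarly $\sum_{i+j = n} u_i(V) \otimes u_j(W)$ is the orthogonal projection onto $\bigoplus_{i+j = n} F_i(V) \otimes F_j(W)$. The intertwining relation then says precisely that $\tau_{V,W}$ carries $F_n(V \oplus W)$ into this target summand; define $\tau^n_{V,W}$ as the resulting restriction. Bijectivity of each $\tau^n_{V,W}$ follows because $\tau_{V,W}$ is itself a bijection and the collection of intertwining relations over all $n$ show that it respects the full orthogonal direct sum decompositions on source and target. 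Unitarity of $\tau^n_{V,W}$ is inherited from $\tau_{V,W}$ since $\tau_{V,W}$ is unitary and both decompositions are orthogonal.

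Finally, naturality of $\tau^n$ in $V$ and $W$ is automatic: the projections $u_n$ commute with $F(f)$ for any morphism $f$ (by the same scalar argument applied to $f \circ (\lambda\,\id{V}) = (\lambda\,\id{W}) \circ f$), so the restrictions of the naturality squares for $\tau$ produce naturality squares for $\tau^n$. There is no real obstacle here beyond keeping track of the indexing; the only conceptual input is the interaction between the polynomial structure on $F(V \oplus W)$ given by the diagonal $\C^*$-action and the product polynomial structure on $F(V) \otimes F(W)$, which is exactly what the naturality of $\tau$ with respect to scalar multiplication encodes.
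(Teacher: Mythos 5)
Your proof is correct and follows essentially the same route as the paper: apply naturality of $\tau$ to the scalar endomorphism $\lambda\,\id{V\oplus W}$, compare coefficients of $\lambda^n$ to get the intertwining relation $\tau_{V,W}\circ u_n(V\oplus W)=\sum_{i+j=n}(u_i(V)\otimes u_j(W))\circ\tau_{V,W}$, then conclude that $\tau_{V,W}$ respects the two orthogonal decompositions and restricts to a unitary isomorphism on each homogeneous piece. Your explicit appeal to $u_n$ being orthogonal projections (from $F(f^*)=F(f)^*$) and your remark about naturality of $\tau^n$ are minor elaborations of points the paper leaves implicit, not a different argument.
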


\begin{proof}
Note that $\lambda\,\id{V \oplus W} = \lambda\,\id{V} \oplus \lambda\,\id{W}$. Since $\tau_{V,W}$ is natural in $V$ and $W$, the following diagram commutes:
\begin{center}
	\begin{tikzcd}
		F(V \oplus W) \ar[d,"F(\lambda\,\id{V} \oplus \lambda\,\id{W})" left] \ar[r,"\tau_{V,W}"] & F(V) \otimes F(W) \ar[d,"F(\lambda\,\id{V}) \otimes F(\lambda\,\id{V})"] \\
		F(V \oplus W) \ar[r,"\tau_{V,W}"] & F(V) \otimes F(W)	
	\end{tikzcd}
\end{center}
Comparing the coefficients of the resulting polynomials we obtain 
\[
	\tau_{V,W} \circ u_n(V \oplus W) = \sum_{i+j = n} (u_i(V) \otimes u_j(W)) \circ  \tau_{V,W}
\] where $(u_k(X))_{k \in \N_0}$ are the idempotents inducing the direct sum decomposition of $F(X)$ as above. Observe that $(u_{i_1}(V) \otimes u_{j_1}(W))(u_{i_2}(V) \otimes u_{j_2}(W)) = 0$ unless $i_1 = i_2$ and $j_1 = j_2$. Since $F_k(X)$ was defined to be the image of $u_k(X)$, this implies that $\tau_{V,W}^n$ restricts to a well-defined homomorphism $\tau_{V,W}^n \colon F_n(V \oplus W) \to \bigoplus_{i+j = n} F_i(V) \otimes F_j(W)$. A similar argument shows that $\tau_{V,W}^{-1}$ restricts to an inverse of $\tau_{V,W}^n$ proving that the latter is an isomorphism. Since $\tau_{V,W}^n$ is a restriction of a unitary map, it preserves inner products and hence is a unitary isomorphism itself. 
\end{proof}

Recall the definition of the linearization from \cite[Appendix~A, Sec.~3]{book:Macdonald}. Let $F_n \colon \Vfin \to \Vfin$ be a homogeneous polynomial functor of degree $n$. Let $V = V_1 \oplus \dots \oplus V_n$ and $M_{\lambda_1, \dots, \lambda_n} \colon V \to V$ be the linear map that multiplies by $\lambda_i$ on $V_i$. Using a similar argument as above, we obtain a direct sum decomposition
\[
	F_n(V_1 \oplus \dots \oplus V_n) = \bigoplus_{i_1 + \dots + i_n = n} F_{i_1, \dots, i_n}(V_1, \dots, V_n)\ ,
\]
where $F_{i_1, \dots, i_n}(V_1, \dots, V_n)$ is fixed by the property that $F_n(M_{\lambda_1, \dots, \lambda_n})$ acts on it by multiplication by $\lambda_1^{i_1}\cdots \lambda_n^{i_n}$. In particular, the linearization of $F_n$ is given by the functor $L_{F_n}(V_1, \dots, V_n) = F_{1, \dots, 1}(V_1, \dots, V_n)$, i.e.\ it is the natural direct summand of $F_n(V_1 \oplus \dots \oplus V_n)$, on which $F_n(M_{\lambda_1, \dots, \lambda_n})$ acts via multiplication by $\lambda_1 \cdots \lambda_n$. 

\begin{lemma} \label{lem:linearization}
	Let $F \colon \Vfin \to \Vfin$ be a polynomial exponential functor and let $(F_n)_{n \in \N_0}$ be its decomposition into homogeneous components. For $n \geq 1$ the linearization of $F_n$ is naturally isomorphic to the functor 
	\[
		L_n(V_1, \dots, V_n) = F_1(V_1) \otimes \dots \otimes F_1(V_n)\ .
	\]
\end{lemma}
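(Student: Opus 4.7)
The plan is to iterate the isomorphism $\tau$ and then restrict to the homogeneous component of degree $n$, after which the summand picked out by the linearization is forced by a degree count.

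First, I would iterate $\tau$ to obtain a natural unitary isomorphism
\[
\tau^{(n)}_{V_1,\dots,V_n} \colon F(V_1 \oplus \dots \oplus V_n) \to F(V_1) \otimes \dots \otimes F(V_n),
\]
which is well-defined (independent of the order of applications of $\tau$) because of the associativity axiom in Def.~\ref{def:exp_functor}(b). Naturality of $\tau^{(n)}$ in each variable is immediate from naturality of $\tau$.

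Next, I would restrict this isomorphism to the component $F_n$. By an iteration of Lemma~\ref{lem:exp-homogeneous}, $\tau^{(n)}$ sends $F_n(V_1 \oplus \dots \oplus V_n)$ isomorphically onto
\[
\bigoplus_{i_1 + \dots + i_n = n} F_{i_1}(V_1) \otimes \dots \otimes F_{i_n}(V_n),
\]
the sum ranging over non-negative integers $i_k$ with $\sum i_k = n$.

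The final step is to identify which summand is the linearization. Recall that $M_{\lambda_1,\dots,\lambda_n} = \lambda_1\,\id{V_1} \oplus \dots \oplus \lambda_n\,\id{V_n}$ under the identification $V = V_1 \oplus \dots \oplus V_n$. By naturality of $\tau^{(n)}$, the map $F_n(M_{\lambda_1,\dots,\lambda_n})$ is conjugate to the tensor product $F_{i_1}(\lambda_1\,\id{V_1}) \otimes \dots \otimes F_{i_n}(\lambda_n\,\id{V_n})$ on the summand $F_{i_1}(V_1) \otimes \dots \otimes F_{i_n}(V_n)$. Since $F_{i_k}$ is homogeneous of degree $i_k$, this factor acts by multiplication by $\lambda_1^{i_1} \cdots \lambda_n^{i_n}$, so the summand is precisely $F_{i_1,\dots,i_n}(V_1,\dots,V_n)$ in the notation of \cite[App.~A]{book:Macdonald}. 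The linearization $L_{F_n}$ corresponds to the multi-index $(1,\dots,1)$ and is therefore naturally isomorphic to $F_1(V_1) \otimes \dots \otimes F_1(V_n)$.

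There is no real obstacle here; the only care needed is to check that the restriction of $\tau^{(n)}$ to the linearization component is genuinely natural in each $V_k$, but this follows from naturality of $\tau^{(n)}$ together with the fact that the projection onto $F_{1,\dots,1}$ is, by construction, natural (being the image of the idempotent picked out by the degree-one part of $F_n(M_{\lambda_1,\dots,\lambda_n})$ in each variable).
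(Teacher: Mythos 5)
Your proof is correct and follows essentially the same route as the paper's: iterate $\tau$ (equivalently, apply Lemma~\ref{lem:exp-homogeneous} repeatedly) to decompose $F_n(V_1 \oplus \dots \oplus V_n)$ into $\bigoplus_{i_1+\dots+i_n=n} F_{i_1}(V_1)\otimes\dots\otimes F_{i_n}(V_n)$, then use naturality to see that $F_n(M_{\lambda_1,\dots,\lambda_n})$ scales each summand by $\lambda_1^{i_1}\cdots\lambda_n^{i_n}$, identifying the multi-index $(1,\dots,1)$ summand with the linearization.
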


\begin{proof}
	By Lemma~\ref{lem:exp-homogeneous} we can decompose $F_n(V_1 \oplus \dots \oplus V_n)$ using the natural isomorphisms $\tau^{k_1}_{V_1, V_2 \oplus \dots \oplus V_n}, \tau^{k_2}_{V_2, V_3 \oplus \dots \oplus V_n}, \dots \tau^{k_{n-1}}_{V_{n-1}, V_n}$ as follows
	\begin{align*}
		F_n(V_1 \oplus \dots \oplus V_n) 
		& \cong \bigoplus_{i_1 + \dots + i_n = n}  F_{i_1}(V_1) \otimes \dots \otimes F_{i_n}(V_n)\ .
	\end{align*}
	Naturality of the above decomposition implies that $F_n(M_{\lambda_1, \dots, \lambda_n})$ acts via multiplication by $\lambda_1^{i_1}\cdots \lambda_n^{i_n}$ on the summand $F_{i_1}(V_1) \otimes \dots \otimes F_{i_n}(V_n)$. Therefore the natural isomorphism used to obtain the decomposition identifies the linearization of $F_n$ with the functor $(V_1, \dots, V_n) \mapsto F_1(V_1) \otimes \dots \otimes F_1(V_n)$.
\end{proof}

\begin{remark} \label{rem:constant_term}
	By \cite[p.~150, Rem.~2]{book:Macdonald} we have that for any polynomial functor $F$ with homogeneous components $(F_n)_{n \in \N_0}$ all objects $F_0(V)$ (i.e.\ the ``constant terms'') are canonically isomorphic to $F_0(0)$. For an exponential polynomial functor the unit transformation $\iota$ from Def.~\ref{def:exp_functor} identifies $F_0(0)$ with $F(0) \cong \C$. Thus, the above theorem is still true for $n = 0$ if we define $L_0$ to be the functor from the trivial category on one object to $\Vfin$ mapping the object to $\C$.
\end{remark}

\subsection{The $R$-matrix of an exponential functor} \label{sec:R-matrix}
Consider a polynomial exponential functor $F \colon \Vfin \to \Vfin$ with homogeneous components $(F_n)_{n \in \N_0}$. Note that the permutation group $S_n$ acts unitarily on $V^n = V \oplus \dots \oplus V$ by permuting the summands. Therefore it also acts on $F_n(V \oplus \dots \oplus V)$ by unitary transformations. We will denote this representation by $\eta$. If $M_{\lambda_1, \dots, \lambda_n}$ denotes the multiplication operator defined above and $\sigma \in S_n$, then the following diagram commutes
\[
\begin{tikzcd}[column sep=3cm]
	F_n(V \oplus \dots \oplus V) \ar[r,"F_n(M_{\lambda_1, \dots, \lambda_n})"] \ar[d,"\eta(\sigma)" left] & F_n(V \oplus \dots \oplus V) \ar[d,"\eta(\sigma)"] \\
	F_n(V \oplus \dots \oplus V) \ar[r,"F_n(M_{\lambda_{\sigma(1)}, \dots, \lambda_{\sigma(n)}})" below] & F_n(V \oplus \dots \oplus V)
\end{tikzcd} 
\]
In particular, the linear map $\eta(\sigma)$ restricts to a unitary automorphism of the linearization $L_{F_n}(V, \dots, V)$. Using Lemma~\ref{lem:linearization} we obtain a unitary representation $\rho^{(n)}$ of $S_n$ on $L_n(\C, \dots, \C)$. Let $W = F_1(\C)$ and consider the unitary automorphism $R \colon W \otimes W \to W \otimes W$ representing the nontrivial element $\tau \in S_2$ on $L_2(\C,\C)$. Denote by $R_i \colon W^{\otimes k} \to W^{\otimes k}$ for $1 \leq i \leq k-1$ the linear map 
\[
	R_i = \id{W^{\otimes (i -1)}} \otimes R \otimes \id{W^{\otimes (k-i-1)}}
\]
i.e.\ it coincides with $R$ on the $i$th and $(i+1)$st tensor factor of $W^{\otimes k}$.

\begin{lemma} \label{lem:R_rep_of_Sn}
	Let $n \in \N$ and let $\tau_i \in S_n$ for $i \in \{1, \dots, n-1\}$ be the transposition of the $i$th and $(i+1)$st element. Let $W = F_1(\C)$ and let $\rho^{(n)} \colon S_n \to U(W^{\otimes n})$ be the representation defined above. Then 
	\[
		\rho^{(n)}(\tau_i) = R_i\ .
	\]
	In particular, the linear map $R$ satisfies $R^2 = \id{W \otimes W}$ and the Yang-Baxter equation 
	\[
		R_1R_2R_1 = R_2R_1R_2\ .
	\]
\end{lemma}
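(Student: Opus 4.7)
The plan is to reduce the claim $\rho^{(n)}(\tau_i) = R_i$ to a statement about a single copy of $F_2$ by exploiting naturality of $\tau$ in both variables together with the nested decomposition of Lemma~\ref{lem:linearization}. The two stated consequences, $R^2 = \id{W \otimes W}$ and the Yang-Baxter equation, will then follow automatically from the fact that $\rho^{(n)}$ is a group homomorphism and from the involution and braid relations satisfied by the transpositions $\tau_i$ in $S_n$.

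First I would fix $V = \C$ and view the transposition $\tau_i \in S_n$ acting on $\C^n$ as the linear map $\id{\C^{i-1}} \oplus \sigma \oplus \id{\C^{n-i-1}}$ under the identification $\C^n \cong \C^{i-1} \oplus \C^2 \oplus \C^{n-i-1}$, where $\sigma \colon \C^2 \to \C^2$ swaps the two coordinates. Applying $\tau^n_{\C^{i-1},\C^2 \oplus \C^{n-i-1}}$ to peel off the first block, then $\tau^b_{\C^2,\C^{n-i-1}}$ on the remaining factor to peel off the last block, and finally iterating $\tau$ within $\C^{i-1}$ and within $\C^{n-i-1}$, Lemma~\ref{lem:exp-homogeneous} together with Lemma~\ref{lem:linearization} produces the direct sum decomposition
\[
F_n(\C^n) \;\cong\; \bigoplus_{i_1 + \dots + i_n = n} F_{i_1}(\C) \otimes \dots \otimes F_{i_n}(\C).
\]
By naturality of $\tau$ in each of its two arguments, the endomorphism $F_n(\id{\C^{i-1}} \oplus \sigma \oplus \id{\C^{n-i-1}})$ is carried under this decomposition to an endomorphism that is the identity on the first $i-1$ and last $n-i-1$ tensor positions and acts as $F_2(\sigma)$ on the middle $F_{b}(\C^2)$ summand.

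Restricting to the linearization (the summand with all $i_k = 1$) forces the middle index to be $b = 2$, and on $L_{F_2}(\C,\C) \cong W \otimes W$ the operator $F_2(\sigma)$ is by the very definition of $R$ equal to $R$ itself. Hence $\rho^{(n)}(\tau_i)$ acts on $W^{\otimes n}$ as $\id{W}^{\otimes (i-1)} \otimes R \otimes \id{W}^{\otimes (n-i-1)} = R_i$. Because $\rho^{(n)}$ is a homomorphism and $\tau_i^2 = e$ in $S_n$, this yields $R_i^2 = \id{W^{\otimes n}}$ and in particular $R^2 = \id{W \otimes W}$; applying $\rho^{(3)}$ to the braid relation $\tau_1\tau_2\tau_1 = \tau_2\tau_1\tau_2$ then delivers the Yang-Baxter equation $R_1 R_2 R_1 = R_2 R_1 R_2$ on $W^{\otimes 3}$.

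The main obstacle I expect is the bookkeeping in the second step: verifying rigorously that the iterated $\tau$-isomorphism of Lemma~\ref{lem:linearization} is genuinely natural in each of its $n$ input slots, so that one may legitimately treat $\id{\C^{i-1}} \oplus \sigma \oplus \id{\C^{n-i-1}}$ position by position. This requires combining the associator coherence diagram of Definition~\ref{def:exp_functor}(b) with naturality of $\tau_{V,W}$ in its individual variables; once that compatibility is established, the rest reduces to the definition of $R$ and to the fact that the adjacent transpositions $\tau_i$ generate $S_n$ subject only to the usual Coxeter relations.
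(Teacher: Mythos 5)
Your proposal is correct and follows essentially the same route as the paper's own proof: identify $\tau_i$ with the linear map that swaps the $i$th and $(i+1)$st copies of $\C$, use naturality of the homogeneous decomposition (Lemmas~\ref{lem:exp-homogeneous} and \ref{lem:linearization}) to show that this endomorphism acts trivially on all tensor slots except the middle $F_2(\C^2)$ block, restrict to the linearization $W^{\otimes n}$ to identify that middle action with $R$, and then deduce $R^2 = \id{W \otimes W}$ and the Yang--Baxter relation from the fact that $\rho^{(n)}$ is a group homomorphism and that the $\tau_i$ satisfy the Coxeter relations. The bookkeeping concern you raise at the end is exactly the point the paper handles by invoking naturality of the projection $F_n \to L_{F_n}$ and inclusion $L_{F_n} \to F_n$, so your instinct about where the real work lies is accurate.
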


\begin{proof}
	Let $\eta_n$ be the representation of $S_n$ on $F_n(\C \oplus \dots \oplus \C)$ as defined above. The transposition $\tau_i$ interchanges the $i$th and the $(i+1)$st summand and acts as the identity on all other summands. Since the projection $F_n(\C, \dots, \C) \to L_n(\C, \dots, \C)$ and inclusion $L_n(\C, \dots, \C) \to F_n(\C, \dots, \C)$ are natural transformations, it follows that $\rho^{(n)}(\tau_i)$ acts trivially on all tensor factors except for the $i$th and $(i+1)$st. To describe the action on these factors observe that 
		\[ 
			F_1(\C)^{\otimes (i-1)} \otimes F_2(\C \oplus \C) \otimes F_1(\C)^{\otimes (n-i-1)}
		\]
		is isomorphic to a natural direct summand of $F_n(\C \oplus \dots \oplus \C)$ using the same method applied in the proof of Lemma~\ref{lem:linearization}. Restricted to this summand $\eta_n(\tau_i)$ agrees with $\id{F_1(\C)^{\otimes (i-1)}} \otimes \eta_2(\tau) \otimes \id{F_1(\C)^{\otimes (n-i-1)}}$, where $\tau \in S_2$ denotes the nontrivial element. Thus, if we restrict further to $F_1(\C)^{\otimes n}$ we see that $\rho^{(n)}(\tau_i)$ agrees with $R_i$. This also implies that the stated relations hold, since they are satisfied by the generators $\tau_i \in S_n$.  	
\end{proof}

The last lemma shows that polynomial exponential functors naturally give rise to unitary involutive $R$-matrices. These were studied in \cite{preprint:LechnerPennigWood} up to the first of the following two equivalence relations:
\begin{definition} \label{def:equiv_R_matrix}
Two unitary involutive $R$-matrices $R \colon W_R^{\otimes 2} \to W_R^{\otimes 2}$ and $S\colon W_S^{\otimes 2} \to W_S^{\otimes 2}$ are called \emph{(unitarily) equivalent} if the associated representations $\rho^{(n)}_R \colon S_n \to U(W_R^{\otimes n})$ and $\rho^{(n)}_S \colon S_n \to U(W_S^{\otimes n})$ are unitarily equivalent for all $n \in \N$. We will denote this by $R \sim_u S$.

Two $R$-matrices as above will be called \emph{strongly equivalent} if they are equivalent in the sense of the last definition and the unitary intertwiner $v_n \colon W_R^{\otimes n} \to W_S^{\otimes n}$ between $\rho_R^{(n)}$ and $\rho_S^{(n)}$ can be chosen to be of the form 
	\[
		v_n = v_1 \otimes \dots \otimes v_1
	\]
	for a unitary isomorphism $v_1 \colon W_R \to W_S$. Since this relation will be seen to be closely related to monoidal equivalence of exponential functors, we will denote it by $R \moneq S$. 
\end{definition}

The following was proven in \cite[Appendix~A, Thm.~5.3]{book:Macdonald} and provides a complete classification of homogeneous polynomial functors in terms of representations of permutation groups.
\begin{theorem} \label{thm:poly_func_classification}
	Let $F_n$ be a homogeneous polynomial functor of degree $n$ with linearization $L_{F_n}$. Then there exists a natural isomorphism of functors 
	\[
		F_n(V) \cong (L_{F_n}(\C, \dots, \C) \otimes V^{\otimes n})^{S_n} \ ,
	\]
	where $S_n$ acts on $L_{F_n}(\C, \dots, \C)$ via the representation induced by permuting the summands of $\C \oplus \dots \oplus \C$ as described above and on $V^{\otimes n}$ by permuting the tensor factors.
\end{theorem}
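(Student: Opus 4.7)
The plan is to construct mutually inverse natural maps between $F_n(V)$ and $(L_{F_n}(\C, \ldots, \C) \otimes V^{\otimes n})^{S_n}$ using the diagonal embedding $\Delta \colon V \to V^{\oplus n}$ and the codiagonal sum $\Sigma \colon V^{\oplus n} \to V$. The key point is that $\Sigma \circ \Delta = n \cdot \id{V}$, so $F_n(\Sigma) \circ F_n(\Delta) = n^n \cdot \id{F_n(V)}$, and restricting to the linearization summand of $F_n(V^{\oplus n})$ will introduce an $n!$ factor from the multinomial expansion which precisely accounts for the averaging that identifies $S_n$-invariants in characteristic zero.

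First, I would establish that the linearization is multilinear in each of its arguments. Since $L_{F_n}(V_1, \ldots, V_n)$ was defined as the eigenspace where $F_n(M_{\lambda_1, \ldots, \lambda_n})$ acts via $\lambda_1 \cdots \lambda_n$, an argument parallel to the proof of Lemma~\ref{lem:linearization} (separating each slot using the polynomial property and the inclusion/projection idempotents) yields a natural isomorphism
\[
	L_{F_n}(V_1, \ldots, V_n) \;\cong\; L_{F_n}(\C, \ldots, \C) \otimes V_1 \otimes \cdots \otimes V_n.
\]
Specialising to $V_j = V$ for all $j$, the $S_n$-action on $L_{F_n}(V, \ldots, V)$ arising from permuting the summands of $V^{\oplus n}$ corresponds under this isomorphism to the diagonal $S_n$-action on the tensor product — permuting the tensor factors of $V^{\otimes n}$ while acting on $L_{F_n}(\C, \ldots, \C)$ via the representation from the theorem statement.

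Next, I would define $\Phi_V \colon L_{F_n}(V, \ldots, V) \to F_n(V)$ as the restriction of $F_n(\Sigma)$ to the linearization summand of $F_n(V^{\oplus n})$. Because $\Sigma$ is invariant under permuting the summands of $V^{\oplus n}$, the map $\Phi_V$ is $S_n$-equivariant for the trivial action on the target, hence factors through the invariants to give
\[
	\bar\Phi_V \colon (L_{F_n}(\C, \ldots, \C) \otimes V^{\otimes n})^{S_n} \to F_n(V).
\]
In the reverse direction, writing $\Delta = i_1 + \cdots + i_n$ as a sum of the coordinate inclusions $i_k \colon V \to V^{\oplus n}$, the polynomial expansion of $F_n(\lambda_1 i_1 + \cdots + \lambda_n i_n)$ isolates, as its coefficient of $\lambda_1 \cdots \lambda_n$, a natural map $F_n(V) \to L_{F_n}(V, \ldots, V)$ whose image lies in the $S_n$-invariants; after dividing by $n!$ this defines $\Psi_V$.

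To finish, one verifies $\bar\Phi_V \circ \Psi_V = \id{F_n(V)}$ using $F_n(\Sigma \circ \Delta) = n^n \id{F_n(V)}$ together with the fact that on the linearization eigenspace only the $n!$ permutations of $(1, \ldots, 1)$ contribute, while the composite $\Psi_V \circ \bar\Phi_V$ is checked to act as the $S_n$-averaging projector, which is the identity on invariants in characteristic zero. The main obstacle is the careful combinatorial bookkeeping: separating the $F_{1,\ldots,1}$-component from the other eigencomponents of $F_n(\Delta)$ and confirming that the normalisation constants conspire correctly. Since the statement is recorded as \cite[App.~A, Thm.~5.3]{book:Macdonald}, we are effectively recalling Macdonald's argument in the notation of the present paper.
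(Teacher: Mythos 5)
Your reconstruction is correct in substance. It is worth noting first that the paper does \emph{not} supply its own proof of Thm.~\ref{thm:poly_func_classification}: it simply cites Macdonald, \cite[App.~A, Thm.~5.3]{book:Macdonald}. What you have written is a polarization/restitution argument, i.e.\ the pair of natural maps coming from $\Sigma \colon V^{\oplus n}\to V$ (restitution) and the coefficient of $\lambda_1\cdots\lambda_n$ in $F_n(\lambda_1 i_1+\dots+\lambda_n i_n)$ (full polarization), and this is indeed the content of Macdonald's proof. The arithmetic checks out: $F_n(\Sigma M_{\lambda}\Delta)=(\lambda_1+\dots+\lambda_n)^n\,\id{F_n(V)}$ gives $\Phi_V\circ\psi_V=n!\,\id{F_n(V)}$ by the multinomial theorem, while for the other composite one expands $M_\lambda\Delta\Sigma=\sum_{j,k}\lambda_j\,i_jp_k$; on the linearization summand only the cross-effects indexed by bijections $j\mapsto k$ survive (the others are killed by comparing $F_n(M_\mu)$-eigenvalues), and $i_j p_{\sigma(j)} = i_jp_j\circ\sigma^{-1}$ together with $F_n[i_1p_1\mid\dots\mid i_np_n]|_{L_{F_n}}=\id{}$ gives $\psi_V\circ\Phi_V=\sum_{\sigma\in S_n}F_n(\sigma)$, the unnormalised averaging operator. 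Both normalised composites are therefore the identity on the invariants, exactly as you claim.

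One small imprecision: you invoke ``an argument parallel to the proof of Lemma~\ref{lem:linearization}'' for the natural isomorphism $L_{F_n}(V_1,\dots,V_n)\cong L_{F_n}(\C,\dots,\C)\otimes V_1\otimes\dots\otimes V_n$. That lemma's proof is specific to \emph{exponential} functors, since it uses the monoidal structure map $\tau$, whereas Thm.~\ref{thm:poly_func_classification} is stated for an arbitrary homogeneous polynomial functor. The correct justification is the multilinearity of $L_{F_n}$ together with the general fact that a degree-one homogeneous polynomial functor $G$ satisfies $G(V)\cong G(\C)\otimes V$, i.e.\ \cite[App.~A, (5.1)]{book:Macdonald}, iterated slot by slot. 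This is exactly what your parenthetical ``separating each slot using the polynomial property and the inclusion/projection idempotents'' describes, so the substance is right; only the pointer to Lemma~\ref{lem:linearization} is slightly misleading.
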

We will use the above theorem to identify isomorphism classes of polynomial exponential functors with equivalence classes of $R$-matrices. The first step is to show the compatibility of the various equivalence relations. We will start by comparing $\funeq$ and $\sim_u$ in the next lemma and defer the comparison of the monoidal equivalences to the end of the next section.

\begin{lemma} \label{lem:F_R_equivalences}
Let $F,G \colon \Vfin \to \Vfin$ be polynomial exponential functors. Let $R,S$ be the $R$-matrices associated to $F$ and $G$ respectively. Then 
\begin{align*}
	F \funeq G \quad & \Leftrightarrow \quad R \sim_u S \ .
\end{align*}
\end{lemma}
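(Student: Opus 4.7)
The plan is to exploit Macdonald's classification (Thm.~\ref{thm:poly_func_classification}), which says that homogeneous polynomial functors are determined up to natural isomorphism by the $S_n$-representation on their linearization. Since the linearizations are identified with $W^{\otimes n}$ carrying the representation $\rho^{(n)}$ (Lem.~\ref{lem:linearization} and Lem.~\ref{lem:R_rep_of_Sn}), natural equivalence of $F$ and $G$ should correspond exactly to unitary equivalence of all the $\rho^{(n)}$'s.

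For $(\Rightarrow)$, suppose we have a natural unitary $v(X) \colon F(X) \to G(X)$. Naturality applied to $\lambda\,\id{V}$ gives $v(V) F(\lambda\,\id{V}) = G(\lambda\,\id{V}) v(V)$; comparing coefficients in the polynomial expansion $F(\lambda\,\id{V}) = \sum_n u^F_n(V)\lambda^n$ yields $v(V)u^F_n(V) = u^G_n(V)v(V)$, so $v$ restricts to a natural unitary $v_n \colon F_n \to G_n$. Repeating the argument with the multiplication operator $M_{\lambda_1,\dots,\lambda_n}$ on $V_1 \oplus \cdots \oplus V_n$ shows that $v_n$ preserves the finer decomposition of $F_n(V_1 \oplus \cdots \oplus V_n)$ into the summands $F_{i_1,\dots,i_n}(V_1,\dots,V_n)$, and hence restricts to a natural unitary between the linearizations $L_{F_n} \to L_{G_n}$. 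Evaluating at $V_1 = \cdots = V_n = \C$ and using Lem.~\ref{lem:linearization}, this produces a unitary $W_R^{\otimes n} \to W_S^{\otimes n}$. The $S_n$-actions $\rho^{(n)}_R, \rho^{(n)}_S$ are defined via functoriality applied to permutations of summands, so naturality of $v_n$ forces this map to intertwine $\rho^{(n)}_R$ and $\rho^{(n)}_S$. Hence $R \sim_u S$.

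For $(\Leftarrow)$, suppose we have unitary intertwiners $v_n \colon W_R^{\otimes n} \to W_S^{\otimes n}$ for every $n$. Via the natural identifications of Lem.~\ref{lem:linearization} we view these as $S_n$-equivariant unitaries $L_{F_n}(\C,\dots,\C) \to L_{G_n}(\C,\dots,\C)$. Tensoring with $\id{V^{\otimes n}}$ gives $S_n$-equivariant unitaries $L_{F_n}(\C,\dots,\C)\otimes V^{\otimes n} \to L_{G_n}(\C,\dots,\C)\otimes V^{\otimes n}$, which restrict to unitaries on the orthogonal direct summands of $S_n$-invariants. Via Thm.~\ref{thm:poly_func_classification} this yields natural unitary isomorphisms $F_n \cong G_n$ for each $n \geq 1$; for $n = 0$, both $F_0$ and $G_0$ are canonically identified with $\C$ by Rem.~\ref{rem:constant_term}. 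Taking the direct sum over $n$ produces the desired natural unitary isomorphism $F \funeq G$.

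The main technical point to verify carefully is that the isomorphism supplied by Thm.~\ref{thm:poly_func_classification} is sufficiently natural for the intertwiner $v_n$ to descend to an $S_n$-equivariant unitary on the invariants in a way that is functorial in $V$; the remaining work (coefficient comparison, handling the $n = 0$ summand, checking that the pieces unitarily assemble) is routine once the setup is in place.
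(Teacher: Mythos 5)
Your proof follows exactly the same route as the paper's: for $(\Rightarrow)$, restrict the natural unitary to homogeneous summands via the coefficient comparison at $\lambda\,\id{V}$, then to linearizations via $M_{\lambda_1,\dots,\lambda_n}$, and invoke functoriality of the permutation action to obtain intertwiners; for $(\Leftarrow)$, feed the intertwiners $v_n$ through the natural isomorphism $F_n(V) \cong (W_R^{\otimes n}\otimes V^{\otimes n})^{S_n}$ from Thm.~\ref{thm:poly_func_classification} and assemble over $n$. The argument is correct, and you even handle the $n=0$ summand explicitly where the paper glosses over it.
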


\begin{proof}
	First suppose that $F \funeq G$ and let $v(X) \colon F(X) \to G(X)$ be a  unitary isomorphism between $F$ and $G$. Let $u^F_n(X), u^G_n(X)$ be the orthogonal projections appearing as coefficients of $\lambda^n$ in $F(\lambda\,\id{X})$, $G(\lambda\,\id{X})$, respectively. By naturality the following diagram commutes
	\[
		\begin{tikzcd}
			F(X) \ar[r,"v(X)"] \ar[d,"F(\lambda\,\id{X})" left] & G(X) \ar[d,"G(\lambda\,\id{X})"]\\
			F(X) \ar[r,"v(X)" below] & G(X)
		\end{tikzcd}
	\] 
	and a comparison of coefficients implies that $v(X)\,u^F_n(X) = u^G_n(X)\,v(X)$. Thus, $v(X)$ restricts to natural unitary isomorphisms 
	\[
		v_n(X) \colon F_n(X) \to G_n(X)
	\]
	of the homogeneous summands of degree $n$. Now fix $n \in \N$ and consider the operator $M_{\lambda_1,\dots, \lambda_n} \colon V \oplus \dots \oplus V \to V \oplus \dots \oplus V$ as above. Naturality implies that the following diagram commutes as well:
	\[
		\begin{tikzcd}[column sep=2.4cm]
			F_n(V \oplus \dots \oplus V) \ar[r,"v_n(V\oplus \dots \oplus V)"] \ar[d,"F_n(M_{\lambda_1, \dots, \lambda_n})" left] & G_n(V \oplus \dots \oplus V) \ar[d,"G_n(M_{\lambda_1, \dots, \lambda_n})"]\\
			F_n(V \oplus \dots \oplus V) \ar[r,"v_n(V\oplus \dots \oplus V)" below] & G_n(V \oplus \dots \oplus V)
		\end{tikzcd}
	\]
	and in particular $v_n(\C \oplus \dots \oplus \C)$ restricts to a unitary isomorphism $v^{L}_n$ of the linearizations of $F$ and $G$, i.e.
	\[
		v^{L}_n \colon L_{F_n}(\C, \dots, \C) \to L_{G_n}(\C, \dots, \C)\ .
	\]
	Let $W_R = F_1(\C)$ and $W_S = G_1(\C)$. By Lemma~\ref{lem:linearization} the linearizations can be identified with $L_n^F(\C, \dots, \C) = W_R^{\otimes n}$ and $L_n^G(\C, \dots, \C) = W_S^{\otimes n}$, respectively. Let $v^{R,S}_n \colon W_R^{\otimes n} \to W_S^{\otimes n}$ be the unitary corresponding to $v^L_n$ under this identification.	A similar argument as above shows that $v^{R,S}_n$ intertwines the two representations $\rho_R^{(n)}\colon S_n \to U(W_R^{\otimes n})$ and $\rho_S^{(n)}\colon S_n \to U(W_S^{\otimes n})$ for each $n \in \N$, since these are induced by the isomorphism permuting the summands of $V \oplus \dots \oplus V$. Hence, $R \sim_{u} S$.
	
	To prove the other direction let $W_R = F_1(\C)$, $W_S = G_1(\C)$ and assume that $R \sim_u S$. Let $v_n \colon W_R^{\otimes n} \to W_S^{\otimes n}$ be the isometric intertwiner between the representations $\rho_R^{(n)}$ and $\rho_S^{(n)}$. By Thm.~\ref{thm:poly_func_classification} there are natural isomorphisms
	\begin{align}
		F_n(V) \cong (W_R^{\otimes n}\otimes V^{\otimes n})^{S_n} \ , \notag \\
		G_n(V) \cong (W_S^{\otimes n}\otimes V^{\otimes n})^{S_n} \ , \label{eqn:lin-iso}
	\end{align}
	where $S_n$ acts on $W_R^{\otimes n}$, $W_S^{\otimes n}$ via $\rho_R^{(n)}$, $\rho_S^{(n)}$, respectively. Therefore $v_n$ induces a natural unitary isomorphism 
	\(
		v_n(V) \colon F_n(V) \to G_n(V)
	\). 
	Since $F(V) \cong \bigoplus_{n \in \N_0} F_n(V)$ and similarly for $G$, this isomorphism extends to a natural unitary isomorphism between $F$ and $G$. Thus, $F \funeq G$, which finishes the proof of the statement.
\end{proof}

\subsubsection{Thoma parameters} \label{sec:Thoma_par}
Denote by $S_{\infty}$ the infinite symmetric group, i.e.\ the union over all $S_n$ with respect to the inclusions $S_n \to S_{n+1}$ induced by permuting the first $n$ elements of $\{1, \dots, n+1\}$. The elements of $S_{\infty}$ are precisely all permutations of $\N$ with finite support. Let $\tau_{n} \colon M_d(\C)^{\otimes n} \to \C$ be the normalised trace. Let $\chi_R^{(n)} \colon S_n \to \C$ be defined by $\chi_R^{(n)} = \tau_n \circ \rho_R^{(n)}$. Then $\chi_R^{(n+1)}$ restricts to $\chi_R^{(n)}$ on $S_n$. Thus, the sequence $(\chi_R^{(n)})_{n \in \N}$ gives rise to a character $\chi_R \colon S_{\infty} \to \C$. It turns out that $\chi_R$ belongs to the class of extremal characters on $S_{\infty}$. These were classified by Thoma \cite{paper:Thoma}: Let $\mathbb{T}$ denote the collection of all sequences $\{\alpha_i\}_{i \in \N}$, $\{\beta_i\}_{i \in \N}$ of real numbers with the following properties
\begin{enumerate}[i)]
	\item $\alpha_i \geq 0$ and $\beta_i \geq 0$,
	\item $\alpha_i \geq \alpha_{i+1}$ and $\beta_i \geq \beta_{i+1}$,
	\item $\sum_i \alpha_i + \sum_j \beta_j \leq 1$.
\end{enumerate}
Then the extremal characters $\chi$ are in $1:1$-correspondence with the elements of $\mathbb{T}$. On an $n$-cycle $c_n \in S_{\infty}$ for $n \geq 2$ the character $\chi$ corresponding to $((\alpha_i)_{i \in \N}, (\beta_i)_{i \in \N})$ takes the value
\[
	\chi(c_n) = \sum_{i} \alpha_i^n + (-1)^{n+1} \sum_{i} \beta_i^n\ .
\]
We will call the pair $((\alpha_i)_{i \in \N}, (\beta_i)_{i \in \N})$ the \emph{Thoma parameters} of $\chi$. The two characters $\chi_R$ and $\chi_S$ associated to the two $R$-matrices $R$ and $S$ agree if and only if $\dim(W_R) = \dim(W_S)$ and $R \sim_u S$ (see Def.~\ref{def:equiv_R_matrix}). 
\pagebreak[10]

The main result of~\cite{preprint:LechnerPennigWood} states that the characters $\chi_R$ are parametrised by Thoma parameters that lie in the following subspace
\begin{enumerate}[i)]
	\item only finitely many $\alpha_i$, $\beta_i$ are non-zero,
	\item all $\alpha_i$ and $\beta_i$ are rational,
	\item $\sum_i \alpha_i + \sum_j \beta_j = 1$.
\end{enumerate}
Since we consider exponential functors $F \colon \Vfin \to \Vfin$, finite-dimensionality imposes a constraint that has consequences for the possible Thoma parameters of $\chi_R$ for the $R$-matrix associated to $F$.

\begin{lemma} \label{lem:Thoma_parameters}
Let $F \colon \Vfin \to \Vfin$ be a polynomial exponential functor, let $W = F_1(\C)$ and let $R \colon W \otimes W \to W \otimes W$ be the $R$-matrix associated to $F$. Then the Thoma parameters of the extremal character $\chi_R$ associated to $R$ satisfy the following conditions:
\begin{enumerate}[a)]
	\item all $\alpha_i = 0$,
	\item only finitely many $\beta_j$ are non-zero, rational and add up to $1$.
\end{enumerate}  
\end{lemma}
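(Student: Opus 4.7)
The plan is to compute the generating function
\[
P(y) := \sum_{n \geq 0} \dim F_n(\C)\, y^n = \operatorname{tr}(F(y\,\id{\C}))
\]
in two independent ways and to exploit that $P$ must be a polynomial (since $F(\C)$ is finite-dimensional, only finitely many $F_n(\C)$ are non-zero). For the first computation, take a diagonalisable $M \colon V \to V$ with eigenvalues $x_1,\dots,x_k$. Naturality of $\tau$ and $\iota$ applied to the decomposition $M \cong \bigoplus_l (x_l\,\id{\C})$ yields $F(M) \cong \bigotimes_l F(x_l\,\id{\C})$, and the identity $F(\lambda\,\id{\C}) = \sum_n \lambda^n u_n(\C)$ gives $\operatorname{tr}(F(\lambda\,\id{\C})) = P(\lambda)$. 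Multiplicativity of trace on tensor products then yields $\operatorname{tr}(F(M)) = \prod_l P(x_l)$.

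The second computation uses Thm.~\ref{thm:poly_func_classification}: with $W = F_1(\C)$ of dimension $d$, the natural isomorphism $F_n(V) \cong (W^{\otimes n} \otimes V^{\otimes n})^{S_n}$ intertwines $F_n(M)$ with $\id{W^{\otimes n}} \otimes M^{\otimes n}$ on the invariants, and $M^{\otimes n}$ commutes with the permutation action. The character formula for invariants then gives
\[
\operatorname{tr}(F_n(M)) \;=\; \frac{d^n}{n!}\sum_{\sigma \in S_n}\chi_R(\sigma)\prod_{c\in\operatorname{cyc}(\sigma)}p_{|c|}(x_1,\dots,x_k).
\]
Summing over $n$, grouping by cycle type, and using multiplicativity of the extremal character $\chi_R$ on disjoint cycles together with Thoma's formula $\chi_R(c_k) = \sum_i \alpha_i^k + (-1)^{k+1}\sum_j \beta_j^k$ (which extends to $k=1$ because $\sum_i \alpha_i + \sum_j \beta_j = 1$ by \cite{preprint:LechnerPennigWood}), the classical exponential formula on cycle types converts the sum into $\exp\!\bigl(\sum_{k\geq 1} d^k \chi_R(c_k)\, p_k(x)/k\bigr)$. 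Substituting Thoma and invoking $\sum_{k \geq 1} y^k p_k(x)/k = -\log\prod_l(1 - y x_l)$ together with its alternating counterpart reduces this to $\prod_l \bigl[\prod_i (1 - d\alpha_i x_l)^{-1}\prod_j (1 + d\beta_j x_l)\bigr]$. Comparing with $\prod_l P(x_l)$, setting $x_1 = y$ and $x_2 = \dots = x_k = 0$, and using $P(0) = 1$ from Rem.~\ref{rem:constant_term}, I obtain the identity
\[
P(y) \;=\; \prod_i \frac{1}{1 - d\alpha_i\, y}\;\prod_j (1 + d\beta_j\, y).
\]

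Since each $\alpha_i,\beta_j \geq 0$ by the definition of $\mathbb{T}$, both $\prod_i (1 - d\alpha_i y)^{-1}$ and $\prod_j (1 + d\beta_j y)$ have non-negative Taylor coefficients, so no cancellation is possible in their product. By \cite{preprint:LechnerPennigWood} only finitely many Thoma parameters are non-zero, so the right-hand side is a rational function; any factor $(1 - d\alpha_i y)^{-1}$ with $\alpha_i > 0$ would contribute an infinite geometric series of strictly positive coefficients that survives multiplication by the polynomial numerator. This contradicts the polynomiality of $P$, forcing $\alpha_i = 0$ for all $i$ and proving (a). Part (b) follows: $P(y) = \prod_j (1 + d\beta_j y)$ has finite degree, so only finitely many $\beta_j$ are non-zero; their rationality and the identity $\sum_j \beta_j = 1$ are then inherited from the general $R$-matrix Thoma-parameter results of \cite{preprint:LechnerPennigWood}.

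The main obstacle is the symmetric-function bookkeeping in the second computation — correctly assembling the character-side sum over cycle types, applying the exponential formula, and matching the result to the exponential-functor side to produce the closed-form product for $P(y)$. Once that identity is in hand, the polynomiality argument eliminating the $\alpha_i$ is elementary.
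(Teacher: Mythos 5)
Your proof is correct but proceeds quite differently from the paper's. The paper specialises Macdonald's Thm.~\ref{thm:poly_func_classification} to $V=\C$ to identify $\dim F_m(\C)$ with the multiplicity $\langle \iota_\C, \rho_R^{(m)}\rangle$ of the trivial representation, and then invokes the vanishing criterion from \cite[Prop.~5.7]{preprint:LechnerPennigWood}, phrased in terms of the Young diagram of $\iota_\C$ containing an $(\ell(\alpha)+1)\times(\ell(\beta)+1)$ rectangle. Since a one-row diagram cannot contain a rectangle of height $\geq 2$, this immediately forces $\ell(\alpha)=0$. Your argument instead evaluates the Hilbert series $P(y)=\sum_n \dim F_n(\C)\,y^n$ directly: the trace of the $S_n$-averaging projector on $W^{\otimes n}\otimes\C^{\otimes n}$, multiplicativity of the extremal character on disjoint cycles, and the cycle-index (exponential) formula produce the closed form
\[
P(y)=\prod_i\frac{1}{1-d\alpha_i y}\prod_j(1+d\beta_j y),
\]
after which the absence of poles in a polynomial kills the $\alpha$-factors. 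Your steps all check out: the trace-of-invariants formula $\frac{1}{n!}\sum_\sigma\operatorname{tr}(\rho_R^{(n)}(\sigma))\operatorname{tr}_V(\sigma M^{\otimes n})$, the replacement of $\operatorname{tr}(\rho_R^{(n)}(\sigma))$ by $d^n\chi_R(\sigma)$, and the extension of Thoma's formula to $1$-cycles via $\sum\alpha_i+\sum\beta_j=1$ (which does need to be invoked from \cite{preprint:LechnerPennigWood}) are all correct. What each buys: the paper's route is short but opaque without unpacking the LPW rectangle criterion; your route is longer but essentially self-contained modulo Thoma's theorem and the equality $\sum\alpha_i+\sum\beta_j=1$, and it produces a useful byproduct, namely the explicit Hilbert series $P(y)=\prod_j(1+b_j y)$ in terms of the rescaled Thoma parameters $b_j = d\beta_j$—exactly the function whose logarithm reappears in the characteristic class computation of Thm.~\ref{thm:char_classes}. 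One small remark: the initial multivariable detour through a general diagonalisable $M$ is not needed; working with $V=\C$ and $M=y\,\id{\C}$ from the start gives $P(y)$ directly on both sides.
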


\begin{proof}
Part b) of the statement is clear from the classification of extremal characters arising from unitary involutive $R$-matrices. Thus, it remains to prove part a). Since $F(\C)$ is finite-dimensional, only finitely many summands in its homogeneous decomposition are nonzero. Let $N \in \N$ have the property that $F_k(\C) = 0$ for all $k \geq N$. By Thm.~\ref{thm:poly_func_classification} the dimension of the space $F_m(\C)$ agrees with the multiplicity of the trivial representation $\iota_{\C} \colon S_m \to U(1)$ in $\rho^{(m)}_R$. Denote by $\langle \lambda, \rho \rangle$ the multiplicity of the irreducible representation $\lambda$ of $S_n$ in the representation $\rho$. By the considerations above we conclude that 
	\[
		\langle \iota_{\C}, \rho_R^{(m)} \rangle  = 0
	\]
	for all $m \geq N$. It was shown in \cite[Prop.~5.7]{preprint:LechnerPennigWood} that this condition holds if and only if the Young diagram of $\iota$ contains a rectangle of height $\ell(\alpha) + 1$ and width $\ell(\beta) +1$, where $\ell(\alpha)$ and $\ell(\beta)$ are the numbers of nonzero $\alpha_i$ and $\beta_i$, respectively. The Young diagram of the trivial representation $\iota$ of $S_m$ is a rectangle of height $1$ and width $m$. In particular, we must have $\ell(\alpha) = 0$.
\end{proof}

The Thoma parameters do not take into account the dimension $d_R = \dim(W)$ of the vector space the $R$-matrix acts on. Thus, it is often more convenient to work with the \emph{rescaled Thoma parameters} 
\[
	a_i = d_R\alpha_i \qquad, \qquad b_i = d_R\beta_i\ .
\]
By \cite[Thm.~3.6]{preprint:LechnerPennigWood} these are natural numbers with the property that $R \sim_u S$ if and only if the rescaled Thoma parameters of $R$ and $S$ agree \cite[Thm.~4.8]{preprint:LechnerPennigWood}.

\subsection{Classification of exponential functors}
In the following we will give examples of polynomial exponential functors $F^W$ (depending on a vector space $W$), which have rescaled Thoma parameters $(0,b_1)$ for any $b_1 \in \N$. Afterwards we extend this to exhaust the list of all possible Thoma parameters discovered in Lemma~\ref{lem:Thoma_parameters}.

Observe that the exterior algebra functor $V \mapsto \extp^*(V)$ is polynomial and exponential. We will examine a modified version of it defined as follows: Fix a finite-dimensional inner product space $W$ and consider the functor 
\begin{equation} \label{eqn:mod_ext_power}
	F^{W}(V) = \bigoplus_{k=0}^{\infty}\,W^{\otimes k} \otimes \extp^k(V)\ ,
\end{equation}
where we define $W^{\otimes 0} = \C$. Since $\extp^k(V) = 0$ for $k > \dim(V)$, the sum is actually finite and the functor $F^{W} \colon \Vfin \to \Vfin$ is well-defined. It is also a polynomial functor with homogeneous components
\[
	F^W_k(V) = W^k \otimes \extp^k(V)\ .
\]
since $V \mapsto \extp^k(V)$ is homogeneous of degree $k$. Observe that there is a natural equivalence  
\begin{align*}
	F^W_k(V_1 \oplus V_2) &=\ W^{\otimes k} \otimes \extp^k(V_1 \oplus V_2) \\
	& \cong\ \bigoplus_{i+j=k} \,\left(W^{\otimes i} \otimes \extp^iV_1\right) \otimes \left(W^{\otimes j} \otimes \extp^jV_2\right) \\
	& = \bigoplus_{i+j=k} F^W_i(V_1) \otimes F^W_j(V_2)\ ,
\end{align*}
where the second line follows from the first by applying the isomorphism $\extp^k(V_1 \oplus V_2) \cong \bigoplus_{i+j=k} \extp^i V_1 \otimes \extp^j V_2$ and then interchanging $W^{\otimes j}$ with $\extp^i V_1$. Moreover, there is a natural isomorphism $F^W(0) \cong \C$. We can extend these identifications to natural unitary isomorphisms
\[
	\tau_{V_1,V_2} \colon F^W(V_1 \oplus V_2) \to F^W(V_1) \otimes F^W(V_2)
\]
and $\iota \colon F^W(0) \to \C$, which satisfy the associativity and unitality conditions in Def.~\ref{def:exp_functor} and therefore turn $F^W$ into a polynomial exponential functor. The linearization of $F^W$ can now be read off from Lemma~\ref{lem:linearization} and is naturally equivalent to 
\begin{align*}
	L_n(V_1, \dots, V_n) &\cong F^W_1(V_1) \otimes \dots \otimes F^W_1(V_n) \cong W \otimes V_1 \otimes \dots \otimes W \otimes V_n \\
	& \cong W^{\otimes n} \otimes V_1 \otimes \dots \otimes V_n\ .
\end{align*}
To understand the $R$-matrix $R \colon W^{\otimes 2} \to W^{\otimes 2}$ we have to look at the $S_2 = \Z/2\Z$-action on the space
\[
	F^W_2(\C \oplus \C) = W^{\otimes 2} \otimes \extp^2(\C \oplus \C)
\]
induced by interchanging the two summands of $\C\oplus \C$. On the top degree summand of $\extp^2(\C \oplus \C)$ this transposition corresponds to multiplication by~$(-1)$. Thus, the $R$-matrix is given by $-\id{W \otimes W}$. Using \cite[Thm.~4.8]{preprint:LechnerPennigWood} we compute the Thoma parameters of the $R$-matrix of $F^W$ to be $(0,\dim(W))$. We summarise these results in the next lemma:

\begin{lemma} \label{lem:exp_functor_FW}
	The functor $F^{W}$ defined in (\ref{eqn:mod_ext_power}) is polynomial and exponential with respect to the unitary natural isomorphisms $\tau$ and $\iota$ defined above. The linearization of $F^W$ is naturally equivalent to $L_n^W$ with 
	\[
		L_n^W(V_1, \dots, V_n) = W^{\otimes n} \otimes V_1 \otimes \dots \otimes V_n
	\]
	Let $d_W = \dim(W)$, then the rescaled Thoma parameters of the extremal character $\chi_R$ associated to the $R$-matrix of $F^{W}$ are $(0,d_W)$. 
\end{lemma}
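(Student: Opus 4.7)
The plan is to assemble the verifications already sketched in the paragraphs leading up to the statement into a clean proof that covers each of the four claims in the lemma: polynomiality with the stated homogeneous decomposition, a unitary monoidal structure satisfying the axioms of Def.~\ref{def:exp_functor}, the linearization, and the rescaled Thoma parameters. Since most of the ingredients are either classical (properties of $\extp^k$) or already proven earlier in the section, the task is really one of bookkeeping rather than genuine new work.

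First I would confirm that $F^W$ is polynomial with $F^W_k = W^{\otimes k} \otimes \extp^k$ as its degree-$k$ homogeneous summand. This rests on the classical fact that $\extp^k$ is a homogeneous polynomial functor of degree $k$ on $\Vfin$, combined with the observation that tensoring with the fixed vector space $W^{\otimes k}$ preserves homogeneity of degree $k$. Next, I would equip $F^W$ with its monoidal structure. The classical exponential law
\[
\extp^k(V_1 \oplus V_2) \cong \bigoplus_{i+j=k} \extp^i V_1 \otimes \extp^j V_2
\]
is natural and unitary with respect to the canonical inner products on exterior powers; tensoring with $W^{\otimes k}$ and swapping the middle pair of tensor factors gives the stated natural unitary isomorphism $\tau_{V_1,V_2}$. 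The unit $\iota \colon F^W(0) \to \C$ identifies the degree-zero summand $W^{\otimes 0} \otimes \extp^0(0) = \C \otimes \C$ with $\C$. Associativity and unitality in Def.~\ref{def:exp_functor} then reduce to the corresponding coherence for the exterior-algebra exponential law together with the obvious reshuffling of $W$-factors, and preservation of adjoints is inherited from $\extp^k$ and from tensoring with a fixed inner product space.

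For the linearization, I would simply apply Lemma~\ref{lem:linearization}, which gives
\[
L_n(V_1, \dots, V_n) \cong F^W_1(V_1) \otimes \cdots \otimes F^W_1(V_n) = (W \otimes V_1) \otimes \cdots \otimes (W \otimes V_n),
\]
and then permute tensor factors to obtain the stated $L_n^W$. For the $R$-matrix, I would invoke Lemma~\ref{lem:R_rep_of_Sn}: $R$ is the operator representing the nontrivial element of $S_2$ on the linearization inside $F^W_2(\C \oplus \C) = W^{\otimes 2} \otimes \extp^2(\C \oplus \C)$. The exterior square $\extp^2(\C \oplus \C)$ is one-dimensional and the transposition acts on it by multiplication by $-1$ (this is the defining feature of the exterior square), so under the identification of the linearization with $W^{\otimes 2}$ the $R$-matrix reads $R = -\id{W \otimes W}$. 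Finally, the rescaled Thoma parameters of this particular involutive $R$-matrix are extracted by a direct application of \cite[Thm.~4.8]{preprint:LechnerPennigWood} to $R = -\id{W \otimes W}$, which yields $(0, d_W)$.

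The proof is essentially a verification, so no deep obstacle is expected. The most error-prone step is the bookkeeping for the monoidal coherence axioms, where one must carefully track the reshuffling of $W^{\otimes i}$, $W^{\otimes j}$, $\extp^i V_1$, $\extp^j V_2$ to ensure compatibility with the associator; but since each elementary step is a canonical isomorphism of inner product spaces, this is routine once set up.
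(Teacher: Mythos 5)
Your proposal follows exactly the route of the paper's (inline) proof: identify the homogeneous summands $F^W_k = W^{\otimes k}\otimes\extp^k$, build $\tau$ and $\iota$ from the classical exterior-power exponential law, read off the linearization via Lemma~\ref{lem:linearization}, observe that the $S_2$-transposition acts by $-1$ on $\extp^2(\C\oplus\C)$ so $R=-\id{W\otimes W}$, and then cite \cite[Thm.~4.8]{preprint:LechnerPennigWood} for the rescaled Thoma parameters $(0,d_W)$. There is no substantive difference in approach, and all steps are correct.
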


Given two $R$-matrices $R \colon W_R^{\otimes 2} \to W_R^{\otimes 2}$ and $S \colon W_S^{\otimes 2} \to W_S^{\otimes 2}$ we can define another $R$-matrix denoted by $R \boxplus S \colon (W_R \oplus W_S)^{\otimes 2} \to (W_R \oplus W_S)^{\otimes 2}$ as follows: On the summand $W_R^{\otimes 2}$ it agrees with $R$, on $W_S^{\otimes 2}$ it agrees with $S$ and on $(W_R \otimes W_S) \oplus (W_S \otimes W_R)$ it is given by interchanging the two summands. This is \cite[Def.~4.1]{preprint:LechnerPennigWood} and the details of this construction can be found in \cite[Sec.~4.1]{preprint:LechnerPennigWood}. We will see that this operation corresponds to forming tensor products of exponential functors. 

Let $F$ and $G$ be polynomial exponential functors. Observe that $V \mapsto (F \otimes G)(V) = F(V) \otimes G(V)$ is again polynomial. It is also exponential with respect to the natural transformation $\tau^{F \otimes G}$ induced by 
\[
\tau^F \otimes \tau^G \colon F(V \oplus W) \otimes G(V \oplus W) \to F(V) \otimes F(W) \otimes G(V) \otimes G(W)
\]
and the isomorphism interchanging the two middle tensor factors. The unit transformation $\iota^{F \otimes G} \colon F(0) \otimes G(0) \to \C$ is given by $\iota^F \otimes \iota^G$ followed by the canonical isomorphism $\C \otimes \C \cong \C$. 

Let $F(\lambda\,\id{V}) = \sum_{n \in \N_0} u^F_n(V)\lambda^n$ and $G(\lambda\,\id{V}) = \sum_{n \in \N_0} u^G_n(V)\lambda^n$ be the decompositions of the multiplication operator. The coefficients are the projections onto the homogeneous components of $F$ and $G$, respectively. Then we have 
\[
	(F \otimes G)(\lambda\,\id{V}) = F(\lambda\,\id{V}) \otimes G(\lambda\,\id{V}) = \sum_{i,j} u^F_i(V) \otimes u^G_j(V) \lambda^{i+j}\ .
\]
This implies that the degree $n$ homogeneous component $(F \otimes G)_n$ of $F \otimes G$ is given by 
\[
	(F \otimes G)_n(V) = \bigoplus_{i+j = n} F_i(V) \otimes G_j(V)\ .
\]
In particular, we obtain $(F \otimes G)_1(V) \cong F_0(V) \otimes G_1(V) \oplus F_1(V) \otimes G_0(V)$. Thus, combining Lemma~\ref{lem:linearization} and Remark~\ref{rem:constant_term} we see that the linearization of $F\otimes G$ is naturally isomorphic to 
\[
	L^{F \otimes G}_n(V_1, \dots, V_n) = (F_1(V_1) \oplus G_1(V_1)) \otimes \dots \otimes (F_1(V_n) \oplus G_1(V_n))\ . 
\]
To identify the permutation part of $R \boxplus S$ we need the following Lemma.

\begin{lemma} \label{lem:action_on_F1}
	Let $F \colon \Vfin \to \Vfin$ be a polynomial exponential functor with homogeneous decomposition $(F_n)_{n \in \N_0}$. Let $\tau_{\C,\C}^1 \colon F_1(\C \oplus \C) \to F_1(\C) \oplus F_1(\C)$ be the natural isomorphism induced by $\tau$ as in Lemma~\ref{lem:exp-homogeneous}, where we identify $F_0(\C) \cong \C$ (see Rem.~\ref{rem:constant_term}). Let $f_V \colon V\oplus V \to V \oplus V$ be the map interchanging the two summands. Then the following diagram commutes
	\[
		\begin{tikzcd}[column sep=1.7cm]
			F_1(\C^2) \ar[r,"F_1(f_{\C})"] \ar[d,"\tau^1_{\C,\C}" left] & F_1(\C^2) \ar[d,"\tau^1_{\C,\C}"] \\
			F_1(\C) \oplus F_1(\C) \ar[r,"f_{F_1(\C)}" below] & F_1(\C) \oplus F_1(\C)
		\end{tikzcd}  
	\]
\end{lemma}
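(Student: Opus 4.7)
The plan is to decompose the swap $f_\C$ into rank-one morphisms that are individually amenable to the naturality of $\tau$, and then transport the decomposition through $F_1$ using the fact that $F_1$ is \emph{additive} on morphisms. Additivity of $F_1$ on morphisms is a direct consequence of homogeneity of degree $1$: writing $F_1(\lambda A + \mu B)$ as a polynomial in $(\lambda,\mu)$, the degree-one condition forces it to equal $\lambda F_1(A) + \mu F_1(B)$.

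Write $\iota_1, \iota_2 \colon \C \to \C \oplus \C$ for the canonical inclusions and $\pi_1, \pi_2 \colon \C \oplus \C \to \C$ for the canonical projections. As linear maps one has $f_\C = \iota_1 \pi_2 + \iota_2 \pi_1$, so additivity and functoriality of $F_1$ give
\[
F_1(f_\C) = F_1(\iota_1) \circ F_1(\pi_2) + F_1(\iota_2) \circ F_1(\pi_1).
\]
It then suffices to show that under $\tau^1_{\C,\C}$ the maps $F_1(\iota_i)$ correspond to the canonical inclusions $\kappa_i \colon F_1(\C) \hookrightarrow F_1(\C) \oplus F_1(\C)$ and $F_1(\pi_j)$ to the canonical projections $p_j$; the conclusion
\[
\tau^1_{\C,\C} \circ F_1(f_\C) \circ (\tau^1_{\C,\C})^{-1} = \kappa_1 \circ p_2 + \kappa_2 \circ p_1 = f_{F_1(\C)}
\]
then follows immediately.

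To identify $F_1(\iota_1)$, I would factor it through the right unitor as $\iota_1 = (\id{\C} \oplus z) \circ r^{-1}$, where $r \colon \C \oplus 0 \to \C$ is the canonical isomorphism and $z \colon 0 \to \C$ is the unique morphism. Joint naturality of $\tau$ applied to the pair $(\id{\C}, z)$ yields $\tau_{\C,\C} \circ F(\id{\C} \oplus z) = (\id{F(\C)} \otimes F(z)) \circ \tau_{\C, 0}$, while the right-hand unit diagram of Def.~\ref{def:exp_functor}(c) expresses $\tau_{\C, 0} \circ F(r^{-1})$ in terms of $\iota^{-1}$. Combining the two, $\tau_{\C,\C} \circ F(\iota_1)$ equals
\[
F(\C) \xrightarrow{\cong} F(\C) \otimes \C \xrightarrow{\id{F(\C)} \otimes \iota^{-1}} F(\C) \otimes F(0) \xrightarrow{\id{F(\C)} \otimes F(z)} F(\C) \otimes F(\C).
\]
Restricting to the degree-one component and using $F(0) = F_0(0)$ (because $F_n(0) = 0$ for $n \geq 1$) together with Remark~\ref{rem:constant_term}, this composite identifies $F_1(\C)$ with the summand $F_1(\C) \otimes F_0(\C)$ via the identity, which is precisely $\kappa_1$. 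The analogous computation for $\iota_2$ uses the left-hand unit diagram, and symmetric factorisations handle the projections $F_1(\pi_j)$.

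The only real subtlety is this last identification step, where one has to verify that the two unit axioms in Def.~\ref{def:exp_functor}(c) enter symmetrically, so that the two summands $F_1(\C) \otimes F_0(\C)$ and $F_0(\C) \otimes F_1(\C)$ get identified with $F_1(\C)$ in compatible ways; everything else is a routine diagram chase combining functoriality, naturality of $\tau$, and additivity of $F_1$ on morphisms.
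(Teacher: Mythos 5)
Your proof is correct and follows essentially the same route as the paper's: decompose $f_\C = \iota_1\pi_2 + \iota_2\pi_1$, invoke additivity of $F_1$ (from homogeneity of degree $1$), and identify $F_1(\iota_j)$, $F_1(\pi_j)$ with the canonical inclusions and projections on $F_1(\C)\oplus F_1(\C)$ by chaining naturality of $\tau$ with the unitality diagrams of Definition~\ref{def:exp_functor}(c). The only cosmetic difference is that you factor through the unitor $\C\oplus 0 \to \C$ and the zero morphism $z\colon 0 \to \C$ explicitly, whereas the paper obtains the same identifications by applying naturality directly to $\id{\C}\oplus 0 \colon \C\oplus\C \to \C\oplus 0$; both yield the identities $p_j^{F_1(\C)}\circ\tau^1_{\C,\C} = F_1(p_j^\C)$ and $\tau^1_{\C,\C}\circ F_1(\iota_j^\C) = \iota_j^{F_1(\C)}$ that make the final computation go through.
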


\begin{proof}
First note $F_1(0) = 0$ and the map $F_1(\C \oplus 0) \to F_1(\C)$ induced by $\tau^1_{\C,0} \colon F_1(\C \oplus 0) \to F_1(\C) \oplus 0 \cong F_1(\C)$ coincides with the one induced by the canonical isomorphism $\C \oplus 0 \to \C$ as a consequence of the unitality condition in Def.~\ref{def:exp_functor} c). Let $p^V_i \colon V \oplus V \to V$ for $i \in \{1,2\}$ be the projection onto the $i$th summand. Thus, by naturality the following diagram commutes
	\[
		\begin{tikzcd}[column sep=2cm]
			F_1(\C \oplus \C) \ar[r,"F_1(\id{\C} \oplus 0)"] \ar[d,"\tau^1_{\C,\C}" left] & F_1(\C \oplus 0) \ar[d] \\
			F_1(\C) \oplus F_1(\C) \ar[r,"p^{F_1(\C)}_1" below] & F_1(\C)
		\end{tikzcd}
	\]
	where the unlabelled arrow on the right is induced by $\C \oplus 0 \to \C$. In particular, we obtain 
	\(
		p_1^{F_1(\C)} \circ \tau^1_{\C,\C} = F_1(p_1^{\C})\ 
	\)
	and similarly with $p_1$ exchanged by $p_2$. Let $\iota^V_j \colon V \to V \oplus V$ be the inclusion onto the $j$th summand. A similar argument shows that 
	\[
		\tau^1_{\C,\C} \circ F_1(\iota_j^{\C}) = \iota_j^{F_1(\C)}
	\]
	Since $F_1$ is homogeneous of degree $1$, it is additive. Note that $f_V$ can be expressed as follows: $f_V = \iota_1^V \circ p_2^V + \iota_2^V \circ p_1^V$. Thus,
	\begin{align*}
		   & \tau^1_{\C,\C} \circ F_1(f_{\C})
		= \tau_{\C,\C}^1 \circ F_1(\iota_1^{\C} \circ p_2^{\C} + \iota_2^{\C} \circ p_1^{\C}) \\
		=\ & \tau_{\C,\C}^1 \circ F_1(\iota_1^{\C}) \circ F_1(p_2^{\C}) + \tau_{\C,\C}^1 \circ F_1(\iota_2^{\C}) \circ F_1(p_1^{\C}) \\
		=\ & \iota_1^{F_1(\C)} \circ p_2^{F_1(\C)} \circ \tau^1_{\C,\C} + \iota_2^{F_1(\C)} \circ p_1^{F_1(\C)} \circ \tau^1_{\C,\C} \\
		=\ & f_{F_1(\C)} \circ \tau^1_{\C,\C} \qedhere
	\end{align*}
\end{proof}

Now we are able to prove our previous guess about the $R$-matrix of tensor products:
\begin{theorem} \label{thm:box_sum}
	Let $F, G$ be polynomial exponential functors with homogeneous decompositions $(F_n)_{n \in \N_0}$ and $(G_n)_{n \in \N_0}$, respectively. Let $W_R = F_1(\C)$ and $W_S = G_1(\C)$ and denote by $R \colon W_R^{\otimes 2} \to W_R^{\otimes 2}$ and $S \colon W_S^{\otimes 2} \to W_S^{\otimes 2}$ the $R$-matrices associated to $F$ and $G$, respectively. Then the $R$-matrix associated to $F \otimes G$ is $R \boxplus S$.
\end{theorem}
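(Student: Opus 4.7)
My plan is to compute the action of the transposition $\tau \in S_2$ on the linearization $L_2^{F \otimes G}(\C, \C)$ of $F \otimes G$ summand by summand, and to match the result with the definition of $R \boxplus S$ on $(W_R \oplus W_S)^{\otimes 2}$. The linearization has already been identified in the discussion preceding Lemma~\ref{lem:action_on_F1} as
\[
L_2^{F \otimes G}(\C, \C) \cong (W_R \oplus W_S)^{\otimes 2} \cong W_R^{\otimes 2} \oplus (W_R \otimes W_S) \oplus (W_S \otimes W_R) \oplus W_S^{\otimes 2},
\]
so the underlying vector space matches that of $R \boxplus S$.

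First I would trace where each of the four summands sits inside $(F \otimes G)_2(\C \oplus \C) = \bigoplus_{i+j=2} F_i(\C \oplus \C) \otimes G_j(\C \oplus \C)$. The summand $W_R^{\otimes 2}$ arises from the linearization of $F_2$ inside $F_2(\C \oplus \C) \otimes G_0(\C \oplus \C)$, the summand $W_S^{\otimes 2}$ from the linearization of $G_2$ inside $F_0 \otimes G_2$, while the two cross summands both originate in $F_1(\C \oplus \C) \otimes G_1(\C \oplus \C)$ after applying $\tau^F_{\C,\C} \otimes \tau^G_{\C,\C}$ followed by the middle-factor swap built into $\tau^{F \otimes G}$.

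The core step is to compute the action of $\tau$ on each summand. On $W_R^{\otimes 2}$ the $S_2$-action factors through $F_2(\C \oplus \C)$, with $G_0 \cong \C$ acted on trivially, so by Lemma~\ref{lem:R_rep_of_Sn} it is exactly $R$; symmetrically, on $W_S^{\otimes 2}$ it is $S$. For the cross summands I would apply Lemma~\ref{lem:action_on_F1} simultaneously to $F$ and $G$: the swap acts on $F_1(\C^2) \otimes G_1(\C^2)$ as $F_1(s) \otimes G_1(s)$ and corresponds, under $\tau^1_{\C,\C} \otimes \tau^1_{\C,\C}$, to the product of coordinate flips $f_{W_R} \otimes f_{W_S}$ on $(W_R \oplus W_R) \otimes (W_S \oplus W_S)$. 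Combining this with the middle swap, a short calculation on pure tensors shows that the summand $F_1(V_1) \otimes G_1(V_2) \cong W_R \otimes W_S$ is carried to $G_1(V_1) \otimes F_1(V_2) \cong W_S \otimes W_R$ by the canonical flip $v \otimes w \mapsto w \otimes v$, and vice versa. Reassembling the four pieces yields precisely $R \boxplus S$ as defined in \cite[Def.~4.1]{preprint:LechnerPennigWood}.

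The main obstacle lies in the cross-term calculation: the combination of the coordinate flips provided by Lemma~\ref{lem:action_on_F1} with the middle tensor swap inside $\tau^{F \otimes G}$ must be tracked at the level of pure tensors to confirm that no extra twist is introduced and that the resulting map really is the bare canonical flip. A possible pitfall is forgetting the unitality isomorphisms $F_0 \cong \C$ and $G_0 \cong \C$ from Remark~\ref{rem:constant_term}, which enter the identification of $F_k(V) \otimes G_0(V)$ with $F_k(V)$ (and similarly on the other side) and are needed to see that the action on the diagonal summands is genuinely $R$ and $S$ without any tensor factor from the complementary functor. Once these bookkeeping points are settled, the comparison with the box sum is immediate.
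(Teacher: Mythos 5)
Your proposal follows essentially the same approach as the paper: decompose $(F \otimes G)_2(\C^2)$ into the summands $F_i(\C^2) \otimes G_j(\C^2)$, observe that $(F \otimes G)_2$ of the swap acts diagonally as $F_i(\tau)\otimes G_j(\tau)$, read off $R$ and $S$ on the two extreme summands after identifying $F_0, G_0 \cong \C$, and use Lemma~\ref{lem:action_on_F1} (applied simultaneously to $F$ and $G$) to see the flip on the cross terms. Your extra bookkeeping with the middle-factor swap built into $\tau^{F\otimes G}$ is harmless (since the swapped factors are $F_0 \cong \C$ and $G_0 \cong \C$) and matches what the paper suppresses; the argument is correct.
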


\begin{proof}
	Let $\tau \in S_2 \cong \Z/2\Z$ be the non-trivial element, which we will identify with the isomorphism interchanging the two summands of $\C^2$. The $R$-matrix of $F \otimes G$ is the restriction of $(F\otimes G)_2(\tau)$ on $(F\otimes G)_2(\C^2)$ to  
	\[
		L = (F \otimes G)_1(\C) \otimes (F \otimes G)_1(\C) \subset (F\otimes G)_2(\C^2)\ .
	\]
	By our observation about tensor products of polynomial functors the 2-homogeneous summand $(F \otimes G)_2(\C^2)$ is naturally isomorphic to
	\begin{equation} \label{eqn:sum_decomposition}
		F_2(\C^2) \otimes G_0(\C^2) \quad \oplus \quad F_1(\C^2) \otimes G_1(\C^2) \quad \oplus \quad F_0(\C^2) \otimes G_2(\C^2)\ .
	\end{equation}
	Observe that $(F \otimes G)_2(\tau)$ acts via $F_i(\tau) \otimes G_j(\tau)$ on $F_i(\C^2) \otimes G_j(\C^2)$ in the above decomposition. The space $L$ is obtained from (\ref{eqn:sum_decomposition}) in the following way: The first summand of (\ref{eqn:sum_decomposition}) contains $F_1(\C) \otimes F_1(\C) \otimes G_0(\C) \otimes G_0(\C)$, on which $(F \otimes G)_2(\tau)$ acts like $R$ after identifying $G_0(\C)$ with $\C$. The last summand contains $F_0(\C) \otimes F_0(\C) \otimes G_1(\C) \otimes G_1(\C)$, on which $(F \otimes G)_2(\tau)$ restricts to $S$ after identifying $F_0(\C) \cong \C$. The second summand contains
	\begin{equation} \label{eqn:middle_summand}
		F_1(\C) \otimes F_0(\C) \otimes G_0(\C) \otimes G_1(\C) \ \oplus \ F_0(\C) \otimes F_1(\C) \otimes G_1(\C) \otimes G_0(\C)\ .
	\end{equation}
	Since $(F \otimes G)_2(\tau)$ acts like $F_1(\tau) \otimes G_1(\tau)$ on the second summand of (\ref{eqn:sum_decomposition}), Lemma~\ref{lem:action_on_F1} implies that $(F \otimes G)_2(\tau)$ restricts to the action that interchanges the two summands of (\ref{eqn:middle_summand}) after identifying $F_0(\C)$ and $G_0(\C)$ with $\C$. These are all summands of $L$. Thus, we see that the $R$-matrix of $F \otimes G$ turns out to be $R \boxplus S$.
\end{proof}

Let $R$, $S$ be two unitary involutive $R$-matrices with rescaled Thoma parameter sets $(a, b) = (\{a_1, \dots, a_m\}, \{b_1, \dots, b_n\})$ and $(a', b')$, respectively. Then the parameter set of $R \boxplus S$ is given by $(a \cup a', b \cup b')$ \cite[Prop.~4.4 ii) and (4.14)]{preprint:LechnerPennigWood}.
\begin{theorem} \label{thm:classification_part_1}
	For any polynomial exponential functor $F \colon \Vfin \to \Vfin$ there is a sequence of finite-dim.\ inner product spaces $W_1, \dots, W_n \in \obj{\Vfin}$ with the property that 
	\[
		F \funeq F^{W_1} \otimes \dots \otimes F^{W_n}\ .
	\]
\end{theorem}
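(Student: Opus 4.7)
The plan is to reduce the theorem to the classification of $R$-matrices by rescaled Thoma parameters and apply Lemma~\ref{lem:F_R_equivalences}. By Lemma~\ref{lem:Thoma_parameters}, the $R$-matrix $R$ associated to $F$ has rescaled Thoma parameters of the form $(0,(b_1,\dots,b_n))$ for non-negative integers $b_i$. The idea is to assemble a tensor product of functors of the form $F^{W_i}$ whose $R$-matrix realises exactly these parameters, and then transfer the resulting unitary equivalence of $R$-matrices back to a natural equivalence of functors.

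Concretely, I would proceed as follows. For each index $i \in \{1,\dots,n\}$, choose $W_i \in \obj{\Vfin}$ with $\dim(W_i) = b_i$ (any such inner product space will do). By Lemma~\ref{lem:exp_functor_FW}, each $F^{W_i}$ is a polynomial exponential functor and its $R$-matrix has rescaled Thoma parameters $(0,b_i)$. Iterating Theorem~\ref{thm:box_sum}, the $R$-matrix associated to $F^{W_1} \otimes \dots \otimes F^{W_n}$ is the iterated box-sum of the individual $R$-matrices; by the additivity of rescaled Thoma parameters under $\boxplus$ recalled just before the statement (\cite[Prop.~4.4~ii) and (4.14)]{preprint:LechnerPennigWood}), its rescaled Thoma parameters are precisely $(0,(b_1,\dots,b_n))$, which agree with those of $R$.

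The final step is to invoke \cite[Thm.~4.8]{preprint:LechnerPennigWood}, which states that two involutive unitary $R$-matrices are unitarily equivalent (in the sense of $\sim_u$) if and only if their rescaled Thoma parameters coincide. This yields $R \sim_u R^{\boxplus}$, where $R^{\boxplus}$ is the $R$-matrix of the tensor product. Applying Lemma~\ref{lem:F_R_equivalences} in the direction $R \sim_u S \Rightarrow F \funeq G$ then gives the desired natural equivalence $F \funeq F^{W_1} \otimes \dots \otimes F^{W_n}$.

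I do not expect any genuine obstacle, since every ingredient has already been established: the main content of the argument is the correct bookkeeping of rescaled Thoma parameters under $\boxplus$ and the dictionary between $R$-matrices and functors. The one point worth handling carefully is the iteration of Theorem~\ref{thm:box_sum}, which requires invoking associativity of $\otimes$ on exponential functors together with the associativity of $\boxplus$ up to unitary equivalence; both are routine, and in any case only the $\sim_u$-class of the resulting $R$-matrix matters for the conclusion. Note that the theorem is deliberately stated only up to $\funeq$ and not up to $\moneq$, which is exactly what allows us to bypass any question of monoidal coherence.
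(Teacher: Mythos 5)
Your proof is correct and follows essentially the same route as the paper: extract the rescaled Thoma parameters $(0,(b_1,\dots,b_n))$ via Lemma~\ref{lem:Thoma_parameters}, realise each $b_i$ by $F^{W_i}$ with $\dim W_i=b_i$ (Lemma~\ref{lem:exp_functor_FW}), combine via Theorem~\ref{thm:box_sum} and the additivity of parameters under $\boxplus$, and transfer $\sim_u$ to $\funeq$ by Lemma~\ref{lem:F_R_equivalences}. The only cosmetic difference is that the paper drops the $b_i=0$ terms and takes $W_i=\C^{b_i}$ explicitly, but this does not affect the argument.
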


\begin{proof}
	Let $(F_n)_{n \in \N_0}$ be the homogeneous components of $F$, let $W = F_1(\C)$, $d_W = \dim(W)$ and let $R \colon W^{\otimes 2} \to W^{\otimes 2}$ be the $R$-matrix associated to~$F$.	By Lemma~\ref{lem:Thoma_parameters} the rescaled Thoma parameters of $R$ have the form 
	\[
		(a,b) = (0, (b_1, \dots, b_n))
	\] 
	for positive integers $b_i$ that satisfy $\sum_i b_i = d_W$. Let $W_i = \C^{b_i}$. Let $R_i$ be the $R$-matrix of the polynomial exponential functor $F^{W_i}$. Its rescaled parameters are $(0,b_i)$ by Lemma~\ref{lem:exp_functor_FW}. By Thm.~\ref{thm:box_sum} the $R$-matrix of $F^{W_1} \otimes \dots \otimes F^{W_n}$ is $R_1 \boxplus \dots \boxplus R_n$, the parameters of which are $(0,(b_1, \dots, b_n))$ by the observation in the preceding paragraph. In particular, the $R$-matrices of $F$ and $F^{W_1} \otimes \dots \otimes F^{W_n}$ are equivalent in the sense of Def.~\ref{def:equiv_R_matrix} and the statement follows from Lemma~\ref{lem:F_R_equivalences}.
\end{proof}

\subsection{From $R$-matrices to exponential functors}
Theorem~\ref{thm:classification_part_1} gives a full classification of polynomial exponential functors up to the first equivalence relation $\funeq$. In this section we study how monoidal equivalence of exponential functors is related to strong equivalence of their respective $R$-matrices. We start by constructing a polynomial exponential functor from a given $R$-matrix with the right Thoma parameters. For a given $V \in \obj{\Vfin}$ let $T \colon V^{\otimes 2} \to V^{\otimes 2}$ be the isomorphism interchanging the tensor factors. Let $R \boxtimes T \colon (W \otimes V)^{\otimes 2} \to (W \otimes V)^{\otimes 2}$ be the tensor product of the two $R$-matrices \cite[Eq.~(4.15)]{preprint:LechnerPennigWood}. Define
\begin{equation} \label{eqn:def_of_FR}
	F^R(V) = \bigoplus_{n \in \N_0} ((W \otimes V)^{\otimes n}))^{S_n}
\end{equation}
where $S_n$ acts on $(W \otimes V)^{\otimes n}$ via $\rho^{(n)}_{R \boxtimes T}$, $S_0$ and $S_1$ are trivial groups and we set $(W \otimes V)^{\otimes 0} = \C$. A priori this is a polynomial functor from $\Vfin$ to the category $\mathcal{V}_{\C}$ of (not necessarily finite-dimensional) inner product spaces. Its homogeneous components $(F^R_n)_{n \in \N_0}$ are given by 
\[
	F^R_n(V) = ((W \otimes V)^{\otimes n})^{S_n}\ .
\]
To see that $F^R$ is in fact an exponential functor, we have to analyse its behaviour with respect to direct sums. Let $i,j \in \N_0$ with $i + j = n$ and consider the embedding
\[
	\iota_{i,j} \colon (W \otimes V_1)^{\otimes i} \otimes (W \otimes V_2)^{\otimes j} \to (W \otimes (V_1 \oplus V_2))^{\otimes n}
\]
induced by the two inclusions $W \otimes V_k \to W \otimes (V_1 \oplus V_2)$ for $k \in \{1,2\}$. Let $T_k$ be the $R$-matrix interchanging the tensor factors of $V_k^{\otimes 2}$. The analogous $R$-matrix for $V_1 \oplus V_2$ is $T_1 \boxplus T_2$. In particular, 
\[
	F^R_n(V_1 \oplus V_2) = ((W \otimes (V_1 \oplus V_2))^{\otimes n})^{S_n}\ ,
\]
where $S_n$ acts via $\rho_{R \boxtimes (T_1 \boxplus T_2)}^{(n)}$. Let $X_{i,j} = S_{n}/(S_i \times S_j)$, $N_{i,j} = \binom{n}{i}$ and choose representatives $\sigma_1 = e, \sigma_2, \dots, \sigma_{N_{i,j}} \in S_n$ for each element of $X_{i,j}$. Define the homomorphism 
\[
	\varphi_{i,j} \colon (W \otimes V_1)^{\otimes i} \otimes (W \otimes V_2)^{\otimes j} \to (W \otimes (V_1 \oplus V_2))^{\otimes n}
\]
by $\iota_{i,j}$ followed by averaging over the action of the representatives of $X_{i,j}$, i.e.\ by
\begin{equation} \label{eqn:phi_ij}
	\varphi_{i,j}(x) = \frac{1}{\sqrt{N_{i,j}}} \sum_{k=1}^{N_{i,j}} \sigma_k \cdot \iota_{i,j}(x)\ ,
\end{equation}
where $S_n$ again acts by $\rho^{(n)}_{R \boxtimes (T_1 \boxplus T_2)}$. Note that the definition of $\varphi_{i,j}$ depends on the choice of representatives $\sigma_1, \dots, \sigma_{N_{i,j}}$.

To understand the action of $S_n$ we employ the following description of $(V_1 \oplus V_2)^{\otimes n}$: Fix $i,j \in \N_0$ with $i + j = n$. Let
\[
	a_{i,j} \colon \{1, \dots, n\} \to \{1,2\} \qquad , \qquad x \mapsto 
	\begin{cases}
		1 & \text{for } 1 \leq x \leq i \\
		2 & \text{else}
	\end{cases}
\]
Note that $a_{i,j} \circ \tau = a_{i,j}$ for all $\tau \in S_i \times S_j$. We have an isomorphism
\begin{equation} \label{eqn:tensor_of_sum}
	(V_1 \oplus V_2)^{\otimes n} \cong \bigoplus_{i+j = n} \bigoplus_{[\sigma] \in X_{i,j}} V_{i,j, [\sigma]}
\end{equation}
with $V_{i,j,[\sigma]} = V_{a_{i,j}(\sigma(1))} \otimes \dots \otimes V_{a_{i,j}(\sigma(n))}$. An element $\tau \in S_n$ maps $V_{i,j,[\sigma]}$ to $V_{i,j,[\tau \cdot \sigma]}$. Thus, our choice of representatives implies that all summands in $\varphi_{i,j}(x)$ are orthogonal. In particular, $\varphi_{i,j}$ is injective. The factor $\tfrac{1}{\sqrt{N_{i,j}}}$ ensures that $\varphi_{i,j}$ also preserves inner products. Define
\[
	\widehat{\kappa}^n_{V_1,V_2} \colon \bigoplus_{i+j = n} (W \otimes V_1)^{\otimes i} \otimes (W \otimes V_2)^{\otimes j} \to (W \otimes (V_1 \oplus V_2))^{\otimes n}
\] 
as the sum over all $\varphi_{i,j}$. The decomposition (\ref{eqn:tensor_of_sum}) shows that each each summand of the domain is mapped to a different orthogonal summand of the codomain. Hence, $\widehat{\kappa}^n_{V_1,V_2}$ is still injective.

Given $k \in \{1,\dots,N_{i,j}\}$ and $\sigma \in S_n$, there is $\ell(k)$ and $\tau \in S_i \times S_j$ with the property that $\sigma \cdot \sigma_k = \sigma_{\ell(k)}\cdot \tau$ and for fixed $\sigma$ the map $k \mapsto \ell(k)$ is a bijection. Let $x \in ((W \otimes V_1)^{\otimes i})^{S_i} \otimes ((W \otimes V_2)^{\otimes j})^{S_j}$. Then we obtain
\begin{align*}
	\sigma \cdot \varphi_{i,j}(x) &= \frac{1}{\sqrt{N_{i,j}}} \sum_{k=1}^{N_{i,j}} (\sigma \cdot \sigma_k) \cdot \iota_{i,j}(x) = \frac{1}{\sqrt{N_{i,j}}} \sum_{k=1}^{N_{i,j}} \sigma_{\ell(k)} \cdot \tau \cdot \iota_{i,j}(x) \\
	&= \frac{1}{\sqrt{N_{i,j}}} \sum_{k=1}^{N_{i,j}} \sigma_{k} \cdot \iota_{i,j}(x)	= \varphi_{i,j}(x)
\end{align*}
where we used the equivariance of $\iota_{i,j}$ with respect to the action of $S_i \times S_j$ on both sides. This implies that $\widehat{\kappa}^n_{V_1,V_2}$ restricts to a natural isometry
\[
	\kappa^n_{V_1,V_2} \colon \bigoplus_{i+j = n} ((W \otimes V_1)^{\otimes i})^{S_i} \otimes ((W \otimes V_2)^{\otimes j})^{S_j} \to ((W \otimes (V_1 \oplus V_2))^{\otimes n})^{S_n}
\] 
Note that $\kappa^n_{V_1,V_2}$ no longer depends on our initial choice of coset representatives $\sigma_k$.

\begin{lemma} \label{lem:tau_surjective}
	Let $R \colon W^{\otimes 2} \to W^{\otimes 2}$ be an $R$-matrix with rescaled Thoma parameters $(0,(b_1, \dots, b_m))$. 	Let $F_n^R$ be the functor and $\kappa^n_{V_1,V_2}$ be the natural transformation constructed above. Then $\kappa^n_{V_1,V_2}$ is a unitary isomorphism for every $V_1,V_2 \in \obj{\Vfin}$. In particular, it induces a natural isomorphism 
	\[
		\tau_{V_1, V_2}^n \colon F^R_n(V_1 \oplus V_2) \to \bigoplus_{i+j = n} F^R_i(V_1) \otimes F^R_j(V_2)
	\]
	with $\tau^n_{V_1,V_2} = \left(\kappa^n_{V_1,V_2}\right)^{-1}$. Moreover, $F^R_n(V) = 0$ for sufficiently large $n$ (depending on $V$) and $F^R(V)$ as defined in (\ref{eqn:def_of_FR}) is finite-dimensional.
\end{lemma}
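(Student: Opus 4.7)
The preceding paragraphs already establish that $\widehat{\kappa}^n_{V_1,V_2}$ is an injective linear isometry and that its restriction $\kappa^n_{V_1,V_2}$ to the $S_i \times S_j$-invariants is well defined and independent of the choice of coset representatives $\sigma_k$. It therefore suffices to prove that $\kappa^n_{V_1,V_2}$ is surjective, together with the vanishing of $F^R_n(V)$ in large degree and the finite-dimensionality of $F^R(V)$. I will accomplish all three at once by computing $\dim F^R_n(V)$ explicitly via a character argument and reading off the conclusions from the resulting generating function.

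Under the canonical reshuffle $(W \otimes V)^{\otimes n} \cong W^{\otimes n} \otimes V^{\otimes n}$, the $R$-matrix $R \boxtimes T$ becomes $R \otimes T$ (conjugated by a permutation of tensor factors), so $\rho^{(n)}_{R \boxtimes T}(\sigma) = \rho^{(n)}_R(\sigma) \otimes \rho^{(n)}_T(\sigma)$ for every $\sigma \in S_n$. Since $\rho^{(n)}_T$ is the standard permutation action with $\operatorname{tr} \rho^{(n)}_T(\sigma) = (\dim V)^{c(\sigma)}$ (where $c(\sigma)$ is the number of cycles of $\sigma$) and $\operatorname{tr} \rho^{(n)}_R(\sigma) = d_W^n\, \chi_R(\sigma)$ with $d_W = \dim W$, averaging over $S_n$ yields
\[
	\dim F^R_n(V) = \frac{d_W^n}{n!} \sum_{\sigma \in S_n} \chi_R(\sigma) (\dim V)^{c(\sigma)}.
\]
By Thoma's theorem the extremal character $\chi_R$ factorizes over disjoint cycles, so the exponential formula for cycle-index sums applies:
\[
	\sum_{n \geq 0} \frac{s^n}{n!} \sum_{\sigma \in S_n} \chi_R(\sigma) (\dim V)^{c(\sigma)} = \exp\Bigl( \dim V \cdot \sum_{k \geq 1} \frac{\chi_R(c_k)\, s^k}{k} \Bigr).
\]
Substituting the Thoma values $\chi_R(c_k) = (-1)^{k+1} \sum_{j=1}^m (b_j/d_W)^k$ (valid also for $k = 1$ because $\sum_j b_j = d_W$) and using $\sum_{k \geq 1} (-1)^{k+1} x^k/k = \log(1+x)$ collapses the exponent, and after the substitution $s = d_W t$ one obtains
\[
	\sum_{n \geq 0} \dim F^R_n(V)\, t^n = \prod_{j=1}^m (1 + b_j t)^{\dim V}.
\]

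The right-hand side is a polynomial in $t$ of degree at most $m \cdot \dim V$, so $F^R_n(V) = 0$ for $n > m \dim V$ and $F^R(V)$ is a finite direct sum of finite-dimensional spaces, hence finite-dimensional. For $V = V_1 \oplus V_2$, the identity $\dim(V_1 \oplus V_2) = \dim V_1 + \dim V_2$ splits the generating function into a Cauchy product, so comparing coefficients of $t^n$ gives $\dim F^R_n(V_1 \oplus V_2) = \sum_{i+j=n} \dim F^R_i(V_1)\, \dim F^R_j(V_2)$. Thus $\kappa^n_{V_1,V_2}$ is an isometric embedding between spaces of equal finite dimension and hence a unitary isomorphism, and $\tau^n_{V_1,V_2}$ is defined as its inverse. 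I expect the main technical friction to lie in the generating-function step, specifically in cleanly combining Thoma multiplicativity with the factorization $\rho^{(n)}_{R \boxtimes T} = \rho^{(n)}_R \otimes \rho^{(n)}_T$ and in carefully tracking the rescaling between $\chi_R$ and the unnormalized trace on $W^{\otimes n}$.
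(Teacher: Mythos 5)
Your proof is correct, and it takes a genuinely different route from the paper to the same dimension count. Where the paper invokes \cite[Prop.~5.7]{preprint:LechnerPennigWood} to express $\langle \iota_\C, \rho^{(n)}_{R\boxtimes T}\rangle$ as an elementary symmetric polynomial evaluated at the rescaled Thoma parameters (each repeated $\dim V$ times), and then checks the required dimension identity by applying Vandermonde's convolution $\sum_{i+j=n}\binom{d_1}{i}\binom{d_2}{j} = \binom{d_1+d_2}{n}$ factor by factor, you compute $\dim F^R_n(V)$ directly by averaging the character of $\rho^{(n)}_{R\boxtimes T} \cong \rho^{(n)}_R \otimes \rho^{(n)}_T$, then use multiplicativity of the extremal character $\chi_R$ over disjoint cycles together with the cycle-index exponential formula to collapse everything into the generating function $\sum_n \dim F^R_n(V)\,t^n = \prod_{j=1}^m (1+b_j t)^{\dim V}$. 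This buys you both conclusions at once and in a cleaner form: finite-dimensionality is immediate because the right-hand side is a polynomial of degree $m\dim V$, and the surjectivity of $\kappa^n_{V_1,V_2}$ follows from the obvious factorisation $\prod_j(1+b_jt)^{d_1+d_2} = \prod_j(1+b_jt)^{d_1}\cdot\prod_j(1+b_jt)^{d_2}$, which replaces the paper's binomial-coefficient manipulation. The paper's argument is shorter once one has Prop.~5.7 of the reference to hand; yours is more self-contained and makes the structure of the answer (a Cauchy product) visible rather than verifying it term by term. One point you glossed correctly but should keep conscious of: the Thoma evaluation $\chi_R(c_k) = (-1)^{k+1}\sum_j (b_j/d_W)^k$ is only stated for $k\geq 2$, and the $k=1$ case is needed in the exponential formula; it holds because $\sum_j b_j = d_W$, which you noted.
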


\begin{proof}
	We have already seen that $\kappa^n_{V_1,V_2}$ is injective. Thus, it suffices to show that the dimensions of domain and codomain agree. This is obvious for $n \in \{0,1\}$. Fix $n \geq 2$ and let $U \in \obj{\Vfin}$. Consider the action of $S_n$ on $(W \otimes U)^{\otimes n}$ via $\rho_{R \boxtimes T}^{(n)}$. Let $d_U = \dim(U)$. The $R$-matrix $R \boxtimes T$ has rescaled Thoma parameters $(0,(b_1, \dots, b_1, b_2,\dots, b_2, \dots, b_m, \dots, b_m)$, where each $b_i$ is repeated $d_U$ times \cite[Lemma~4.9]{preprint:LechnerPennigWood}.
	
	The dimension of $((W \otimes U)^{\otimes n})^{S_n}$ agrees with the multiplicity $\langle \iota_{\C}, \rho_{R \boxtimes T}^{(n)}\rangle$ of the trivial representation $\iota_{\C}$ in $\rho^{(n)}_{R \boxtimes T}$. This multiplicity was computed in \cite[Prop.~5.7]{preprint:LechnerPennigWood} in terms of the Thoma parameters and evaluates to
	\begin{align*}
		&\langle \iota_{\C}, \rho_{R \boxtimes T}^{(n)}\rangle = \left[(1 \otimes \omega) \circ \Delta(s_n) \right]((0,\dots, 0), (b_1, \dots, b_1, \dots, b_m, \dots, b_m)) \\
		=\ & e_n(b_1, \dots, b_1, \dots, b_m, \dots, b_m) = \sum_{s_1 + \dots + s_m = n} \binom{d_U}{s_1} \cdots \binom{d_U}{s_m} b_1^{s_1} \cdots b_m^{s_m} 
	\end{align*}
	where we refer the reader to \cite[Sec.~5.2]{preprint:LechnerPennigWood} for the notation used here. For sufficiently large $n$ there will always be at least one summand $s_k$ that is larger than $d_U$. Hence, this expression vanishes for large $n$, which proves the second statement. 
	
	For $k \in \{1,2\}$ let $d_k = \dim(V_k)$ and let $T_k \colon V_k^{\otimes 2} \to V_k^{\otimes 2}$ be the map interchanging the tensor factors. Let $T_{12}$ be the corresponding operation on $(V_1 \oplus V_2)^{\otimes 2}$. Using the following identity for the binomial coefficients  
	\[
		\sum_{i + j = n} \binom{d_1}{i} \binom{d_2}{j} = \binom{d_1 + d_2}{n}
	\] 
	we can compare the dimensions of the domain and codomain of $\kappa^n_{V_1,V_2}$:
	\begin{align*}
		& \sum_{i +j = n} \langle \iota_{\C}, \rho_{R \boxtimes T_1}^{(i)}\rangle\langle \iota_{\C}, \rho_{R \boxtimes T_2}^{(j)}\rangle \\ 
		=\ & \sum_{i +j = n} \sum_{s_1 + \dots + s_m = i \atop t_1 + \dots + t_m = j} \binom{d_1}{s_1} \binom{d_2}{t_1} \cdots \binom{d_1}{s_m} \binom{d_2}{t_m} b_1^{s_1 + t_1} \cdots b_m^{s_m + t_m} \\		
		=\ & \sum_{x_1 + \dots + x_m = n}\!\! \left(\sum_{s_1 + t_1 = x_1}\!\! \binom{d_1}{s_1} \binom{d_2}{t_1}\!\right) \cdots \left(\sum_{s_m + t_m = x_m}\!\! \binom{d_1}{s_m} \binom{d_2}{t_m}\!\right) b_1^{x_1} \cdots b_m^{x_m} \\		
		=\ & \sum_{x_1 + \dots + x_m = n} \binom{d_1 + d_2}{x_1} \cdots \binom{d_1+d_2}{x_m} b_1^{x_1} \cdots b_m^{x_m} 	
		= \langle \iota_{\C}, \rho_{R \boxtimes T_{12}}^{(n)}\rangle		
	\end{align*}
	This proves that $\kappa^n_{V_1,V_2}$ is in fact an isomorphism. Thus, $\tau^n_{V_1,V_2}$ is well-defined. 
\end{proof}

The transformations $\tau_{V_1,V_2}^n$ yield natural isomorphisms 
\[
	\tau_{V_1,V_2} \colon F^R(V_1 \oplus V_2) \to F^R(V_1) \otimes F^R(V_2)\ .
\] 
Moreover, there is a canonical isomorphism $\iota \colon F^R(0) \to \C$. We are now finally ready to state the main result of this section, that rounds off the classification of polynomial exponential functors on $\Vfin$. 
\begin{theorem} \label{thm:classification_part_2}
	Let $W \in \obj{\Vfin}$ and let $R \colon W^{\otimes 2} \to W^{\otimes 2}$ be a unitary involutive $R$-matrix with rescaled Thoma parameters $(0,(b_1, \dots, b_m))$. 	Let $F^R \colon \Vfin \to \Vfin$ be the functor defined in (\ref{eqn:def_of_FR}) and let $\tau_{V_1,V_2}$ and $\iota$ be the natural isomorphisms defined above. Then $(F^R,\tau,\iota)$ is a well-defined polynomial exponential functor. The $R$-matrix associated to $F^R$ is $R$. 
\end{theorem}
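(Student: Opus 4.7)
The proof splits into three tasks: show that $F^R$ is polynomial (with the $F^R_n$ as its homogeneous decomposition, preserving adjoints), verify the exponential structure axioms for $(\tau,\iota)$, and identify the resulting $R$-matrix.

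For the first task, I would observe that $V \mapsto (W \otimes V)^{\otimes n}$ is a homogeneous polynomial functor of degree $n$ in the sense of Macdonald, and that the unitary $S_n$-representation $\rho^{(n)}_{R \boxtimes T}$ acts on it naturally in $V$. Hence the orthogonal projection onto the invariant subspace commutes with all morphisms in $\Vfin$ and produces a homogeneous polynomial functor $F^R_n$ of degree $n$ that preserves adjoints (since unitarity of the action makes the projection self-adjoint). Finite-dimensionality of $F^R(V)$ was already established in Lemma~\ref{lem:tau_surjective}, so $F^R \colon \Vfin \to \Vfin$ is well-defined and polynomial, with homogeneous summands $F^R_n$.

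For the exponential axioms, unitality reduces to the observation that $\varphi_{n,0}$ and $\varphi_{0,n}$ (each with only one coset representative) coincide with the isomorphisms induced by the canonical maps $V \oplus 0 \to V$ and $0 \oplus V \to V$, so the diagrams in Def.~\ref{def:exp_functor} c) commute on the nose after identifying $F^R(0) = F^R_0(0) \cong \C$. The main obstacle is associativity of $\tau$: I would argue that both compositions in the associativity pentagon unwind to the single natural averaging map
\[
	\bigoplus_{i+j+k=n} F^R_i(V_1) \otimes F^R_j(V_2) \otimes F^R_k(V_3) \to F^R_n(V_1 \oplus V_2 \oplus V_3)
\]
obtained from the triple inclusion into $(W \otimes (V_1 \oplus V_2 \oplus V_3))^{\otimes n}$ by summing over coset representatives of $S_i \times S_j \times S_k$ inside $S_n$. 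Independence of this averaging from the choice of representatives (proved as in the construction of $\varphi_{i,j}$), combined with the standard factorisations of the coset space $S_n/(S_i \times S_j \times S_k)$ through either $S_n/(S_{i+j}\times S_k)$ or $S_n/(S_i \times S_{j+k})$, will show that the two sides of the pentagon coincide.

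Finally, $F^R_1(\C) = (W \otimes \C)^{S_1} \cong W$ matches the claimed underlying vector space. By Lemma~\ref{lem:linearization}, the linearization of $F^R_2$ at $(\C,\C)$ sits inside $F^R_2(\C \oplus \C)$ as the image of $\varphi_{1,1} \colon W \otimes W \to F^R_2(\C \oplus \C)$. A direct calculation on $\iota_{1,1}(w_1 \otimes w_2) = (w_1 \otimes e_1) \otimes (w_2 \otimes e_2)$, passing through the identification $(W \otimes (\C \oplus \C))^{\otimes 2} \cong W^{\otimes 2} \otimes (\C \oplus \C)^{\otimes 2}$ under which $R \boxtimes T$ becomes $R \otimes T$, then shows that the summand-swap $f_\C$ induces on $\varphi_{1,1}(W \otimes W)$ precisely the operator $\varphi_{1,1} \circ R$. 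Involutivity $R^2 = \id{W^{\otimes 2}}$ enters crucially here to identify the two averaged terms on both sides of the equation, and thus the $R$-matrix associated to $F^R$ is $R$.
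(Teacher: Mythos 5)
Your proposal follows essentially the same strategy as the paper's proof: finite-dimensionality and hence polynomiality via Lemma~\ref{lem:tau_surjective}, unitality via the degenerate averaging maps $\varphi_{n,0}$ and $\varphi_{0,n}$, associativity by factoring coset representatives of $S_n/(S_i\times S_j\times S_k)$ through $S_n/(S_{i+j}\times S_k)$ or $S_n/(S_i\times S_{j+k})$ and invoking independence of $\kappa$ from the choice of representatives, and identification of the $R$-matrix by restricting the summand swap to the image of $\varphi_{1,1}$ under the identification $R\boxtimes T \leftrightarrow R\otimes T$. Your explicit observation that involutivity $R^2=\id{W^{\otimes 2}}$ is what closes the final computation (so that the intertwined operator is $R$ rather than $R^{-1}$) is correct and a touch more detailed than the paper's wording, but it is the same argument.
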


\begin{proof}
We have seen in Lemma~\ref{lem:tau_surjective} that $F^R(V)$ is finite-dimensional for every $V \in \obj{\Vfin}$. In particular, $F^R \colon \Vfin \to \Vfin$ is a well-defined polynomial functor with homogeneous components $(F^R_n)_{n \in \N_0}$.

It is straightforward to check that $\widehat{\kappa}^n_{V,0}$ coincides with the canonical homomorphism $(W \otimes V)^{\otimes n} \otimes \C \to (W \otimes (V \oplus 0))^{\otimes n}$ and likewise for $\widehat{\kappa}^n_{0,V}$. This implies that $\iota$ makes the diagram in Def.~\ref{def:exp_functor}~c) commute.

It remains to be shown that $\tau_{V_1,V_2}$ satisfies the associativity condition in Def.~\ref{def:exp_functor}~b). For this it suffices to see that 
\begin{equation} \label{eqn:kappa_associative}
	\varphi_{i,j+k}^{1,2\oplus 3} \circ \left(\id{(W \otimes V_1)^{\otimes i} } \otimes \varphi^{2,3}_{j,k}\right) = \varphi_{i+j,k}^{1\oplus 2,3} \circ \left(\varphi_{i,j}^{1,2} \otimes \id{(W \otimes V_3)^{\otimes k} } \right)
\end{equation}
on $((W \otimes V_1)^{\otimes i})^{S_i} \otimes ((W \otimes V_2)^{\otimes j})^{S_j} \otimes ((W \otimes V_3)^{\otimes k})^{S_k}$. The left hand side involves averaging over the coset space $X_{j,k}$ and then over $X_{i,j+k}$. Identify $X_{j,k}$ with the set of representatives $\sigma^{(2)}_s \in S_{j+k} \subset S_{i+j+k}$, where $S_{j,k}$ embeds into $S_{i+j+k}$ as the subgroup permuting the last $j+k$ elements. Likewise, let $\sigma^{(1)}_r \in S_{i+j+k}$ be the representatives for the elements of $X_{i,j+k}$. Let $X_{i,j,k} = S_{i+j+k}/(S_i \times S_j \times S_k)$ and note that 
	\begin{equation} \label{eqn:bijection}
		X_{i,j+k} \times X_{j,k} \to X_{i,j,k} \qquad , \qquad (\sigma^{(1)}_r, \sigma^{(2)}_s) \mapsto \left[\sigma^{(1)}_r \cdot \sigma^{(2)}_s\right]
	\end{equation}
	is a bijection. In particular, $(\sigma_r^{(1)}\,\sigma_s^{(2)})_{r,s}$ is a collection of representatives in $S_{i+j+k}$ for the elements in $X_{i,j,k}$. The right hand side of (\ref{eqn:kappa_associative}) involves averaging first over $X_{i,j}$, then over $X_{i+j,k}$. A similar argument as above yields a bijection $X_{i+j,k} \times X_{i,j} \to X_{i,j,k}$ proving that the product of the corresponding coset representatives yields another set of representatives for the elements in $X_{i,j,k}$. As was already observed above, the maps $\kappa^{p,q}_{U_1,U_2}$ are independent of the choice of representatives. Hence, the two procedures both yield the average over $X_{i,j,k}$ and therefore	 give the same result on $((W \otimes V_1)^{\otimes i})^{S_i} \otimes ((W \otimes V_2)^{\otimes j})^{S_j} \otimes ((W \otimes V_3)^{\otimes k})^{S_k}$.
	
	Let $\tau \in S_2$ be the non-trivial element. The $R$-matrix of $F^R$ is obtained by restricting the action of $S_2$ on $F^R_2(\C^2)$ given by interchanging the two summands to the direct summand $F^R_1(\C) \otimes F^R_1(\C)$, that is embedded into $F^R_2(\C^2)$ via $\varphi_{1,1}$. Observe that $F^R_1(\C) = W \otimes \C \cong W$ by definition and that 
	\(
		\varphi_{1,1} \colon W \otimes W \to (W \otimes \C^2)^{\otimes 2}
	\)
	is given by \[
		\varphi_{1,1}(w_1 \otimes w_2) = \frac{1}{\sqrt{2}} \left( w_1 \otimes e_1 \otimes w_2 \otimes e_2 + \rho_{R \boxtimes T}^{(2)}(\tau)(w_1 \otimes e_1 \otimes  w_2 \otimes e_2) \right)\ ,
	\] 
	where $\{e_1,e_2\} \subset \C^2$ is the standard basis of $\C^2$. The $R$-matrix $T$ interchanges the two factors of $(\C^2)^{\otimes 2}$, i.e.\ it switches $e_1$ and $e_2$ in the above expression. This implies that the $S_2$-action on $W \otimes \C^2$ that interchanges the summands restricts to  $\rho_R^{(2)}(\tau) = R$ on $W \otimes W$.
\end{proof}

\pagebreak
\begin{lemma} \label{lem:F_R_moneq}
Let $F,G \colon \Vfin \to \Vfin$ be polynomial exponential functors. Let $R,S$ be the $R$-matrices associated to $F$ and $G$ respectively. Then 
\begin{align*}
	F \moneq G \quad & \Leftrightarrow \quad R \moneq S\ .
\end{align*}
In particular, $F \moneq F^R$.
\end{lemma}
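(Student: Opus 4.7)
\textbf{Forward direction.} Suppose $v \colon F \to G$ is a monoidal natural unitary isomorphism. By the argument in the first paragraph of the proof of Lemma~\ref{lem:F_R_equivalences}, $v$ restricts to natural unitary isomorphisms $v_n(X) \colon F_n(X) \to G_n(X)$ of the homogeneous summands. Applying monoidality inductively with $V_1 = \dots = V_n = \C$ and restricting to the $n$-homogeneous components via Lemma~\ref{lem:exp-homogeneous} shows that under the inclusions
\[
	F_1(\C)^{\otimes n} \hookrightarrow F_n(\C \oplus \dots \oplus \C)\ , \qquad G_1(\C)^{\otimes n} \hookrightarrow G_n(\C \oplus \dots \oplus \C)
\]
induced by iterating $\tau^F$ and $\tau^G$ (as in the proof of Lemma~\ref{lem:linearization}), the restriction of $v_n(\C \oplus \dots \oplus \C)$ to the linearisation agrees with $v_1(\C)^{\otimes n}$. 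Since the linearisation carries the representation $\rho_R^{(n)}$ (resp.\ $\rho_S^{(n)}$) by the discussion in Section~\ref{sec:R-matrix}, the unitary $v_1 := v_1(\C) \colon W_R \to W_S$ satisfies $v_1^{\otimes n}\,\rho_R^{(n)}(\sigma) = \rho_S^{(n)}(\sigma)\,v_1^{\otimes n}$ for every $\sigma \in S_n$, which is precisely $R \moneq S$.

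\textbf{The key step of the converse is to establish $F \moneq F^R$.} For this I combine Thm.~\ref{thm:poly_func_classification} with the computation of the linearisation in Lemma~\ref{lem:linearization} to obtain, for each $n \in \N_0$, a natural unitary isomorphism
\[
	\psi_n(V) \colon F_n(V) \xrightarrow{\ \cong\ } (W^{\otimes n} \otimes V^{\otimes n})^{S_n}\ ,
\]
where $S_n$ acts on $W^{\otimes n}$ via $\rho_R^{(n)}$ and on $V^{\otimes n}$ by permutation of tensor factors. The canonical shuffle of tensor factors identifies the right-hand side with $((W \otimes V)^{\otimes n})^{S_n} = F^R_n(V)$, since the combined action corresponds exactly to $\rho_{R \boxtimes T}^{(n)}$. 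Summing over $n$ yields a natural unitary isomorphism $\psi(V) \colon F(V) \to F^R(V)$. To see that $\psi$ is monoidal, I would compare both sides of the $\tau$-square: on the $F$-side, iterating $\tau^F$ decomposes $F_n(V_1 \oplus V_2)$ as the orthogonal sum of images of $F_i(V_1) \otimes F_j(V_2)$ arising from summation over the cosets $X_{i,j} = S_n/(S_i \times S_j)$; on the $F^R$-side, $\tau^{F^R}$ is realised by the averaged inclusions $\varphi_{i,j}$ of (\ref{eqn:phi_ij}) over exactly the same cosets. Matching these two coset-averaging procedures shows $\psi$ intertwines $\tau^F$ and $\tau^{F^R}$, so $F \moneq F^R$.

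\textbf{Completing the reverse direction.} Given $R \moneq S$ via a unitary $v_1 \colon W_R \to W_S$, define $w_n(V) \colon F^R_n(V) \to F^S_n(V)$ as the restriction of $(v_1 \otimes \id{V})^{\otimes n}$ to the $S_n$-invariants, which is well-defined since $v_1^{\otimes n}$ intertwines $\rho_R^{(n)}$ with $\rho_S^{(n)}$ by assumption. The natural unitary isomorphism $w(V) = \bigoplus_n w_n(V) \colon F^R(V) \to F^S(V)$ is monoidal with respect to the $\tau$-transformations of Thm.~\ref{thm:classification_part_2}, as follows immediately from the explicit formula (\ref{eqn:phi_ij}): $v_1^{\otimes n}$ commutes tensor-factor-wise with each $\sigma_k \cdot \iota_{i,j}$ and therefore with the averaging. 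Applying the key step to both $F$ and $G$ and chaining the equivalences $F \moneq F^R \moneq F^S \moneq G$ gives $F \moneq G$. The main technical obstacle is verifying monoidality of $\psi$ in the key step: this requires the careful identification of the coset-summation model of $\tau^{F^R}$ built into the construction of $F^R$ with the iterated application of $\tau^F$, which is the only nontrivial bookkeeping in the argument.
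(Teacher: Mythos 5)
Your overall architecture matches the paper's: you reduce the equivalence $F \moneq G \Leftrightarrow R \moneq S$ to the two pieces $F \moneq F^R$ and $R \moneq S \Leftrightarrow F^R \moneq F^S$, with the forward implication extracted by restricting a monoidal equivalence to the linearisation (that part is essentially the paper's argument), and the direction $R \moneq S \Rightarrow F^R \moneq F^S$ handled by lifting $v_1$ to $\hat v_i(V) = (v_1 \otimes \id{V})^{\otimes i}$ and checking compatibility with the averaged inclusions $\varphi_{i,j}$ (also essentially the paper's argument). The problem is that the central step, monoidality of $\psi \colon F \to F^R$, is where you defer the work, and the mechanism you gesture at does not quite exist. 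You say that ``on the $F$-side, iterating $\tau^F$ decomposes $F_n(V_1 \oplus V_2)$ as the orthogonal sum \dots arising from summation over the cosets $X_{i,j}$,'' and that one only needs to ``match these two coset-averaging procedures.'' But $\tau^F_{V_1,V_2}$ is just the structural isomorphism of the exponential functor; nothing about it is defined as a coset average, and a priori it has no relation to the $\varphi_{i,j}$ of \eqref{eqn:phi_ij}. The whole difficulty is to relate the Macdonald isomorphism $\ell_n(V) \colon F_n(V) \to (W_R^{\otimes n} \otimes V^{\otimes n})^{S_n}$ of \eqref{eqn:lin-iso} to both $\tau^F$ and $\kappa^R_{i,j}$, and nothing in your sketch explains why they fit together.

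The missing idea is the explicit form of $\ell_n$: the paper observes $\ell_n(V) = q_n^V \circ F_n(\Delta_n^V)$, where $\Delta_n^V(x) = \tfrac{1}{\sqrt n}(x,\dots,x)$ and $q_n^V$ is the projection onto $L_{F_n}(V,\dots,V)^{S_n}$ followed by the linearisation identification. It is this factorisation that lets one build a ladder of squares: the upper square commutes by naturality once one introduces the auxiliary averaging map $\psi_{i,j} \colon V_1^{\oplus i} \oplus V_2^{\oplus j} \to (V_1 \oplus V_2)^{\oplus n}$ (acting on the variable $V$, not on $W \otimes V$) with constants chosen so that $\psi_{i,j}\circ(\Delta_i^{V_1}\oplus\Delta_j^{V_2}) = \Delta_n^{V_1\oplus V_2}$; and the lower square is shown to commute up to a scalar, which must be $1$ because all maps involved are isometries. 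Without the $\ell_n = q_n^V \circ F_n(\Delta_n^V)$ identity and the diagonal-plus-averaging trick there is no obvious way to ``match'' the two sides, so the key step as written remains a genuine gap rather than nontrivial bookkeeping; the rest of your proposal is sound and parallel to the paper's proof.
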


\begin{proof}
We will first show $F \moneq F^R$ and deduce the first statement from this. There is a natural isomorphism $F_1(\C) \otimes V \cong F_1(V)$ (cf.\ \cite[App.~A, (5.1)]{book:Macdonald}) and throughout this proof we will identify the two spaces. Let $\ell_n(V) \colon F_n(V) \to F^R_n(V)$ be the isomorphism in \eqref{eqn:lin-iso}. Denote by $\Delta^{V}_{k} \colon V \to V^{\oplus k}$ the isometric linear embedding with 
\[
	\Delta^{V}_{k}(x) = \frac{1}{\sqrt{k}}(x,\dots, x)\ .
\]
Let $q_n^V \colon F_n(V) \to (F_1(V)^{\otimes n})^{S_n} \cong F^R_n(V)$ be the self-adjoint projection onto $L_{F_n}(V, \dots, V)^{S_n}$ followed by the identification of $L_{F_n}(V, \dots, V)^{S_n}$ with $(F_1(V)^{\otimes n})^{S_n}$ as  in Lemma~\ref{lem:linearization}. Then we have $\ell_n(V) = q_n^V \circ F_n(\Delta_n^V)$. Consider the isomorphisms 
\begin{align*}
	\left(\tau_{V_1,V_2}^n\right)^{-1} &\colon \bigoplus_{i+j = n} F_i(V_1) \otimes F_j(V_2) \to F_n(V_1 \oplus V_2) \ , \\
	\left(\tau_{V_1,V_2}^{R,n}\right)^{-1} &\colon \bigoplus_{i+j = n} F^R_i(V_1) \otimes F^R_j(V_2) \to F^R_n(V_1 \oplus V_2) \ .
\end{align*}
For fixed $i,j \in \N$ with $i+j = n$ they restrict to isometric embeddings 
\begin{align*}
	\kappa_{i,j} &\colon F_i(V_1) \otimes F_j(V_2) \to F_n(V_1 \oplus V_2) \ ,\\
	\kappa^R_{i,j} &\colon F^R_i(V_1) \otimes F^R_j(V_2) \to F^R_n(V_1 \oplus V_2)\ .	
\end{align*}
To show $F \moneq F^R$ it suffices to prove that the following diagram commutes:
\[
\begin{tikzcd}[column sep=1.3cm, row sep=1cm]
	F_i(V_1) \otimes F_j(V_2) \ar[r, "\kappa_{i,j}"] \ar[d,"\ell_i(V_1) \otimes \ell_j(V_2)" left] & F_n(V_1 \oplus V_2) \ar[d,"\ell_n(V_1 \oplus V_2)"] \\
	F^R_i(V_1) \otimes F^R_j(V_2) \ar[r, "\kappa_{i,j}^{R}" below] & F^R_n(V_1 \oplus V_2)
\end{tikzcd}	
\]
Let $\iota_{i,j}^{\oplus} \colon V_1^{\oplus i} \oplus V_2^{\oplus j} \to (V_1 \oplus V_2)^{\oplus n}$ be the isometric embedding that adds zeroes in the last $(n-i)$, respectively first $(n-j)$, components. Let $N_{i,j} =  \binom{n}{i}$ and let $\sigma_k$ for $k \in \{1,\dots, N_{i,j}\}$ be coset representatives for the elements of $S_n/(S_i \times S_j)$. Analogous to the definition of $\varphi_{i,j}$ in \eqref{eqn:phi_ij} let $\psi_{i,j} \colon V_1^{\oplus i} \oplus V_2^{\oplus j} \to (V_1 \oplus V_2)^{\oplus n}$ be the linear map given by
\[
	\psi_{i,j}(x,y) = \sum_{k=1}^{N_{i,j}} \sigma_k \cdot \iota_{i,j}^{\oplus}(c_1\,x,c_2\,y) 
\]
for $x \in V_1^{\oplus i}$, $y \in V_2^{\oplus j}$, $c_i>0$ and where $S_n$ acts on $(V_1 \oplus V_2)^{\oplus n}$ by permuting the summands. Note that the constants $c_1,c_2$ can be chosen such that $\psi_{i,j} \circ (\Delta_i^{V_1} \oplus \Delta_j^{V_2}) = \Delta^{V_1 \oplus V_2}_n$. Let 
\[
	\kappa_{V_1^{\oplus i}, V_2^{\oplus j}} \colon F_i(V_1^{\oplus i}) \otimes F_j(V_2^{\oplus j}) \to F_n(V_1^{\oplus i} \oplus V_2^{\oplus j})
\]
be defined analogous to $\kappa_{i,j}$, but with $V_1^{\oplus i}$ in place of $V_1$ and $V_2^{\oplus j}$ instead of $V_2$. Let $\widehat{\kappa}_{i,j} = F_n(\psi_{i,j}) \circ \kappa_{V_1^{\oplus i}, V_2^{\oplus j}}$ and consider the diagram
\[
	\begin{tikzcd}
		F_i(V_1) \otimes F_j(V_2) \ar[r,"\kappa_{i,j}" above] \ar[d,"F_i(\Delta_i^{V_1}) \otimes F_j(\Delta_j^{V_2})" left] & F_n(V_1 \oplus V_2) \ar[d,"F_n(\Delta^{V_1 \oplus V_2}_n)"] \\
		F_i(V_1^{\oplus i}) \otimes F_j(V_2^{\oplus j}) \ar[d,"q_i^{V_1} \otimes q_j^{V_2}" left] \ar[r,"\widehat{\kappa}_{i,j}"] & F_n((V_1 \oplus V_2)^{\oplus n}) \ar[d,"q_n^{V_1 \oplus V_2}"] \\
		F^R_i(V_1) \otimes F^R_j(V_2) \ar[r,"\kappa^R_{i,j}" below] & F^R_n(V_1 \oplus V_2)
	\end{tikzcd}
\]
The naturality of $\kappa_{i,j}$ and our choice for $c_1,c_2$ implies that the upper square commutes. Restricted to  $(F_1(V_1 \oplus V_2)^{\otimes n})^{S_i \times S_j}$ the map $F_n(\sum_{k} \sigma_k)$ corresponds to a multiple of the orthogonal projection to $(F_1(V_1 \oplus V_2)^{\otimes n})^{S_n}$, while $F_n(\iota^{\oplus}_{i,j}(c_1 \cdot, c_2 \cdot))$ restricts to a scalar multiple of $\iota_{i,j}$ on $F_i^R(V_1) \otimes F_j^R(V_2)$. Hence, the lower square commutes up to a scalar. But since the outer square is a diagram of isometric embeddings, this scalar has to be $1$. 

Thus, to prove the first statement it suffices to see that $F^R \moneq F^S$ implies $R \moneq S$ and vice versa. As was shown in the proof of Lemma~\ref{lem:F_R_equivalences} the unitary isomorphism $v_n(
\C^{\oplus n}) \colon F^R_n(\C^{\oplus n}) \to F^S_n(\C^{\oplus n})$ restricts to a unitary intertwiner $v^{R,S}_n \colon W_R^{\otimes n} \to W_S^{\otimes n}$ between the representations $\rho_R^{(n)}$ and $\rho_S^{(n)}$, where $W_R = F^R_1(\C)$ and $W_S = F^S_1(\C)$. Let $\kappa^R_{i,j}, \kappa^S_{i,j}$ be the isometric embeddings for $R$ and $S$, respectively. Let $c_{i,j} \colon W_R^{\otimes i} \otimes W_R^{\otimes j} \to W_R^{\otimes n}$ be the canonical isomorphism. The associativity condition (Def.~\ref{def:exp_functor} b)) implies that the following diagram commutes:
\[
	\begin{tikzcd}
		F^R_i(\C^{\oplus i}) \otimes F^R_j(\C^{\oplus j}) \ar[r,"\kappa_{i,j}^R"] & F^R_n(\C^{\oplus n}) \\
		W_R^{\otimes i} \otimes W_R^{\otimes j} \ar[u] \ar[r,"c_{i,j}" below] & W_R^{\otimes n} \ar[u]
	\end{tikzcd}
\]
where the unlabelled vertical arrows identify $W_R^{\otimes k}$ with $L_{F^R_k}(\C,\dots, \C)$ and embed it into $F_k^R(\C^{\oplus k})$. Together with the naturality of $v$ this implies that $v_n^{R,S} = v_i^{R,S} \otimes v_j^{R,S}$ and thus $v_n^{R,S} = v_1^{R,S} \otimes \dots \otimes v_1^{R,S}$. Hence, $R \moneq S$.

In the other direction let $v_1 \colon W_R \to W_S$ be a unitary isomorphism  witnessing the equivalence $R \moneq S$. It induces a unitary isomorphism
\[
	\hat{v}_i(V) \colon (W_R \otimes V)^{\otimes i} \to (W_S \otimes V)^{\otimes i}
\]
by letting $v_1^{\otimes i}$ act on the tensor factor $W_R^{\otimes i}$. This makes the following diagram commute:
\[
	\begin{tikzcd}
		(W_R \otimes V_1)^{\otimes i} \otimes (W_R \otimes V_2)^{\otimes j} \ar[r,"\varphi^R_{i,j}"] \ar[d,"\hat{v}_i(V_1) \otimes \hat{v}_j(V_2)" left] & (W_R \otimes (V_1 \oplus V_2))^{\otimes n} \ar[d,"\hat{v}_1(V_1 \oplus V_2)"] \\
		(W_S \otimes V_1)^{\otimes i} \otimes (W_S \otimes V_2)^{\otimes j} \ar[r,"\varphi^S_{i,j}" below] & (W_S \otimes (V_1 \oplus V_2))^{\otimes n}
	\end{tikzcd}
\]
Here we use the fact that $\hat{v}_n(V_1 \oplus V_2)$ restricts to $\hat{v}_i(V_1) \otimes \hat{v}_j(V_2)$ on the direct summand $(W_R \otimes V_1)^{\otimes i} \otimes (W_R \otimes V_2)^{\otimes j}$. Taking the sum over the homogeneous components we see that the restriction $v_n(V)$ of $\hat{v}_n(V)$ to the $S_n$-invariant subspace $((W_R \otimes V)^{\otimes n})^{S_n} = F^R(V)$ induces a monoidal equivalence between $F^R$ and $F^S$. This proves the statement.  
\end{proof}

\section{Twists via Exponential Functors} 
In this section we will study a natural map $\tau_F \colon BBU_{\oplus} \to B\BUloc{d}$ induced by an exponential functor $F \colon \Viso \to \Viso$ with $d = \dim(F(\C))$. Combined with the equivalence $SU \simeq BBU_{\oplus}$  obtained from Bott periodicity this yields the twists 
\(
	\tau^n_F \colon SU(n) \to B\BUloc{d}
\)
alluded to in the introduction. 

We will then study how they behave in rationalised $K$-theory $KU_{\Q}$. Since the Chern character identifies $KU_{\Q}$ with ordinary cohomology in even degrees, the twists correspond to odd degree cohomology classes. To compute these for $\tau^n_F$ we will employ maps $S \C P^{n-1} \to SU(n)$ detecting the generators of $H^*(SU(n),\Z)$. 

\subsection{The twist induced by an exponential functor} \label{sec:twists_via_mon_cats}
Let $\Viso$ be the isomorphism subcategory of $\Vfin$ and note that $\Viso$ is essentially small. A skeleton is constructed as follows: Let $\BUpl$ be the subcategory of $\Viso$ with objects $\N_0$ (with $k \in \N_0$ corresponding to $\C^k$) and
\[
	\hom(m,n) = \begin{cases}
		\emptyset & \text{if } m \neq n \\
		U(n) & \text{otherwise}
	\end{cases}\ .
\]
The geometric realisation of the nerve of this category is clearly given by 
\[
	\lvert \BUpl \rvert = \coprod_{n \in \N_0} BU(n)\ .
\]
Moreover, $\BUpl$ has a monoidal structure given by the addition of natural numbers on objects and the block sum on morphisms. This reflects the monoid structure on isomorphism classes of vector bundles given by direct sum. On the geometric realisation it corresponds to maps 
\[
	BU(n) \times BU(m) \to BU(n + m)
\] 
induced by the group homomorphism $U(n) \times U(m) \to U(n+m)$ given by the block sum. By \cite[Cor.~11.7]{book:MayIteratedLoopSpaces} the space $\lvert \BUpl \rvert$ is a topological monoid and its group completion is 
\[
\Omega B\lvert \BUpl \rvert \simeq BU_{\oplus} \times \Z \ ,
\]
where $BU_{\oplus}$ is the classifying space of the colimit $U$ of the unitary groups $U(n)$ with respect to the inclusions $U(n) \to U(n+1)$. Note that the classifying space $BU$ has two natural $H$-space structures corresponding to the direct sum and the tensor product of vector bundles, respectively. We will decorate $BU$ accordingly, if we want to stress the $H$-space structure we have in mind.  

Given an exponential functor $F \colon \Viso \to \Viso$ let $W = F(\C)$ and let $\BUte{W}$ be the category with objects $\N_0$ and  
\[
	\hom(m,n) = \begin{cases}
		\emptyset & \text{if } m \neq n \\
		U(W^{\otimes n}) & \text{otherwise}
	\end{cases}\ ,
\]
where $U(W^{\otimes n})$ is the group of unitary automorphisms on the inner product space $W^{\otimes n}$. This is a monoidal category. The tensor product is given by the addition of natural numbers on the objects and by the tensor product of unitaries
\[
\begin{tikzcd}
	U(W^{\otimes m}) \times U(W^{\otimes n}) \ar[r,"\otimes"] & U(W^{\otimes (m+n)}) \quad , \quad (u_1,u_2) \mapsto u_1 \otimes u_2
\end{tikzcd}
\]
on the morphisms. Note that we understand 
\[
	W^{\otimes n} = (\dots (W \otimes W) \otimes W) \otimes \dots )\ ,
\] 
i.e.\ we fix a choice of brackets for the tensor powers of $W$. In particular, the associators of $\BUte{W}$ are identities. The maps $W^{\otimes m} \otimes W^{\otimes n} \to W^{\otimes n} \otimes W^{\otimes m}$, which interchange the tensor factors, turn $\BUte{W}$ into a permutative category via conjugation. 
The geometric realisation of this category is 
\[
	\lvert \BUte{W} \rvert = \coprod_{n \in \N_0} BU(W^{\otimes n})\ .
\]
Let $d = \dim(W)$ and denote by $1_d \in U(W)$ the identity matrix. The tensor product described above induces an embedding of $U(W^{\otimes n})$ into $U(W^{\otimes (n+1)})$ defined by  
\[
	\iota_n \colon U(W^{\otimes n}) \to U(W^{\otimes (n+1)}) \quad , \quad u \mapsto u \otimes 1_d\ ,
\]
which is a group homomorphism. 

Let $\Zloc{d} \subset \Q$ be the subring of the rationals given by all fractions $\tfrac{p}{q}$ with $q$ an arbitrary power of $d$. By \cite[Thm.~2.2]{paper:Sullivan} there is a topological space $\BUloc{d}$ and a continuous map $\ell \colon BU \to \BUloc{d}$, such that the pair is a localisation of the simply connected space $BU$ in the sense of \cite[Def.~2.1]{paper:Sullivan} with respect to the set of prime factors of $d$, i.e.\ with $\Z_{\ell} = \Zloc{d}$. For further details we refer the reader to \cite[Chap.~3]{book:Adams}. The following is a well-known result from homotopy theory. Since we could not find a proof in the literature, we give one here.

\begin{lemma} \label{lem:htpy_type_BUF}
	The group completion of $\lvert \BUte{W} \rvert$ satisfies
	\[
		\Omega B \lvert \BUte{W} \rvert \simeq \BUloc{d} \times \Z \ .
	\]
\end{lemma}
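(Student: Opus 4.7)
The plan is to apply the group completion theorem of McDuff--Segal to $|\BUte{W}|$ and then identify the resulting mapping telescope with $\BUloc{d}$ by exhibiting an explicit map of spaces that is a weak equivalence.

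First I would use that $|\BUte{W}|$ is a topological monoid (in fact an $E_\infty$-space from the permutative category structure) with $\pi_0 = \N_0$, freely generated by the class $[1]$ of $W$. The group completion theorem identifies $\Omega B|\BUte{W}|$ up to homology (and hence up to homotopy on each component, since everything in sight is simply connected in the $0$-component) with the telescope obtained by inverting the action of $[1]$. Multiplication by $[1]$ at level $n$ is the tensor product with a chosen basepoint $1_W \in U(W)$, which at the level of classifying spaces is exactly the homomorphism $\iota_n \colon BU(W^{\otimes n}) \to BU(W^{\otimes (n+1)})$ defined before the lemma. Splitting off the $\pi_0 = \Z$ factor, one obtains
\[
	\Omega B|\BUte{W}|_{0} \;\simeq\; \hocolim\bigl(BU(W^{\otimes 0}) \xrightarrow{\iota_0} BU(W^{\otimes 1}) \xrightarrow{\iota_1} BU(W^{\otimes 2}) \xrightarrow{\iota_2} \cdots \bigr).
\]

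Next I would construct a comparison map from this telescope to $\BUloc{d}$. For each $n$ let $g_n \colon BU(d^n) \to BU \xrightarrow{\ell} \BUloc{d}$ be the direct-sum stable inclusion followed by Sullivan's localisation. Since the homotopy groups of $\BUloc{d}$ are $\Z[\tfrac{1}{d}]$-modules, multiplication by $d$ is a self-equivalence of $\BUloc{d}$; let $[\mu_d^{-n}]$ denote its $n$-th inverse, and set $f_n = [\mu_d^{-n}] \circ g_n$. A direct computation in K-theory (or equivalently via the Chern character) shows that $g_{n+1} \circ \iota_n \simeq d \cdot g_n$, because a bundle $V$ of rank $d^n$ is sent by $\iota_n$ to $V \otimes \C^d$, whose stable class is $d \cdot [V]$. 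Hence $f_{n+1} \circ \iota_n \simeq f_n$, and the $f_n$ assemble to a map $f_\infty \colon \hocolim_n BU(d^n) \to \BUloc{d}$.

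To conclude, I would verify that $f_\infty$ is a weak equivalence. Both spaces are simply connected, so it suffices to check homotopy groups. For $m \geq 1$ and $d^n$ larger than $m+1$, $\pi_{2m}(BU(d^n)) = \Z$, and the induced map $\iota_{n*}$ is multiplication by $d$ by the same K-theoretic calculation; similarly $\pi_{2m+1}(BU(d^n)) = 0$. Therefore $\pi_*$ of the telescope is concentrated in even degrees $\geq 2$, where it equals the colimit $\colim(\Z \xrightarrow{d} \Z \xrightarrow{d} \cdots) = \Z[\tfrac{1}{d}]$, and $f_\infty$ realises the canonical identification of this colimit with $\pi_{2m}(\BUloc{d}) = \Z[\tfrac{1}{d}]$. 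Whitehead's theorem then finishes the argument.

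The main obstacle I anticipate is the compatibility $f_{n+1}\circ \iota_n \simeq f_n$: one needs a clean way to model the $H$-space multiplication-by-$d$ self-equivalence of $\BUloc{d}$ and to compare $g_{n+1}\circ \iota_n$ with it, since $\otimes \C^d$ and a scalar-$d$ self map agree only stably and after localisation. I would handle this either by representing both sides as maps of spectra (where the identification is automatic after smashing with the Moore spectrum $S\Z[\tfrac{1}{d}]$), or by carefully working with the tensor $H$-space structure $BU_\otimes[\tfrac{1}{d}]$ and the universal bundle over $BU(d^n)$ to write the homotopy between $g_{n+1}\circ \iota_n$ and $d \cdot g_n$ explicitly.
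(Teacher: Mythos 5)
Your plan is correct and follows the same overall route as the paper: group completion identifies $\Omega B|\BUte{W}|$ with the mapping telescope of the $\iota_n$ (up to $\pi_0 \cong \Z$), and then one identifies that telescope with $\BUloc{d}$. Where you differ is in the second step. The paper does not construct an explicit map into $\BUloc{d}$: it compares the telescope of the $BU(W^{\otimes n})$ with a parallel telescope of stabilised classifying spaces $BU_\infty(W^{\otimes n}) \simeq BU$ (linked by a levelwise ladder of weak equivalences), and then invokes \cite[Prop.~6.61]{book:MayPonto}, which constructs the localisation of a simply connected space directly as such a mapping telescope, to read off $\BUloc{d}$. You instead build the comparison map $f_\infty$ by hand from the stable inclusions $BU(d^n) \to BU$, Sullivan's localisation map $\ell$, and inverse powers of the degree-$d$ self-equivalence, and verify it is a $\pi_*$-isomorphism. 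Both are valid; the paper's citation route is shorter, while yours is more explicit. For the group-completion input the paper cites \cite[Cor.~1.2]{paper:Randal-Williams}, which gives $\Omega B M \simeq M_\infty^+$ for homotopy commutative topological monoids (and then uses simple connectivity to drop the plus), whereas you cite McDuff--Segal and upgrade the homology isomorphism via simplicity of all spaces in sight -- these are interchangeable here.

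One remark on the obstacle you flag at the end: the compatibility $f_{n+1}\circ \iota_n \simeq f_n$ is actually unproblematic and does not require passing to spectra. Reducing it to $g_{n+1}\circ\iota_n \simeq [\mu_d]\circ g_n$, observe that with $E \to BU(d^n)$ the universal bundle, $g_{n+1}\circ\iota_n$ classifies the localised stable class $[E\otimes\C^d]-d^{n+1}$, while $[\mu_d]\circ g_n$ (using the direct-sum $H$-space $d$-fold multiplication) classifies $d\cdot([E]-d^n)$; since $E\otimes\C^d$ is canonically isomorphic to $E^{\oplus d}$, these coincide in $[BU(d^n),\BUloc{d}]$. (There is no $\lim^1$ subtlety here since $K^0(BU(d^n))$ is a power series ring.) So your outline, once this point is filled in, closes up into a complete proof.
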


\begin{proof}
The homomorphism $\iota_n$ induces $B\iota_n \colon BU(W^{\otimes n}) \to BU(W^{\otimes (n+1)})$. The collection of all $B\iota_n$ gives a continuous map $B\iota_{\infty} \colon \lvert\BUte{W}\rvert \to \lvert\BUte{W}\rvert$.
Let $\lvert \BUte{W}\rvert_{\infty}$ be the mapping telescope of the sequence
\[
	\begin{tikzcd}
		\lvert \BUte{W}\rvert \ar[r,"B\iota_{\infty}"] & \lvert\BUte{W}\rvert \ar[r,"B\iota_{\infty}"] & \lvert\BUte{W}\rvert \ar[r,"B\iota_{\infty}"] & \dots
	\end{tikzcd}
\] 
Given a finite dimensional inner product space $V$ denote by $U_{\infty}(V)$ the colimit over $m$ of $U(\bigoplus_{k=1}^m V) \to U(\bigoplus_{k=1}^{m+1} V)$ defined by $w \mapsto w \oplus \id{V}$. The embedding into the upper left hand corner induces a continuous map $BU(V) \to BU_{\infty}(V)$. The following diagram commutes:
\[
	\begin{tikzcd}[column sep=1.9cm]
		BU(W^{\otimes n}) \ar[r,"B(u\, \mapsto u \otimes 1_d)" below] \ar[d] & BU(W^{\otimes (n+1)}) \ar[d]\\
		BU_{\infty}(W^{\otimes n}) \ar[r,"B(u\, \mapsto u \otimes 1_d)" below] & BU_{\infty}(W^{\otimes (n+1)}) 
	\end{tikzcd}
\]
The map $BU(W^{\otimes n}) \to BU(W^{\otimes (n+1)})$ induced by $u \mapsto u\otimes 1_d$ classifies the tensor product of the universal bundle with the trivial vector bundle with fibre $W$, which has rank $d$. It induces multiplication by $d$ on all homotopy groups $\pi_k$ with $k > 0$. The analogous statement is true for the bottom line of the diagram. Thus, the vertical maps induce a weak equivalence of the respective colimits. Each space $BU_{\infty}(W^{\otimes n})$ is homotopy equivalent to $BU$. By \cite[Prop.~6.61]{book:MayPonto} the colimit over the maps in the last row is equivalent to the localisation $\BUloc{d}$. This gives the homotopy type of the path components of $\lvert \BUte{W}\rvert$. Furthermore $\pi_0(\Omega B\lvert \BUte{W}\rvert) \cong \Z$. As a consequence we obtain
\[
	\lvert \BUte{W}\rvert_{\infty} \simeq \BUloc{d} \times \Z\ .
\]
Note in particular that each path component is simply connected. Since $\BUte{W}$ is permutative, the topological monoid $\lvert \BUte{W}\rvert$ is homotopy commutative. The statement now follows from \cite[Cor.~1.2]{paper:Randal-Williams} and the observation that  $\lvert \BUte{W}\rvert_{\infty}^+ \simeq \lvert \BUte{W}\rvert_{\infty}$.
\end{proof}

\begin{remark} \label{rem:Z-component}
	The path components of $\lvert \BUte{W} \rvert$ are labelled by natural numbers, where $k \in \N$ corresponds to the space $BU(W^{\otimes k})$ classifying vector bundles of rank $d^k$. Thus, we should think of the path component of $\Omega B \lvert \BUte{W} \rvert$ labelled by $k \in \Z$ as the one classifying ``localised'' virtual vector bundles of virtual dimension $d^k$ and hence of the factor $\Z$ as sitting inside $\Zloc{d}$ via the embedding $k \mapsto d^k$.   
\end{remark}

The exponential functor $F$ induces homomorphisms $ U(n) \to U(F(\C^n))$. Let $\tau_{m,n} = \tau_{\C^m, \C^n}$ be the natural transformation that is part of the structural data of $F$. By splitting off tensor factors via $\tau_{1,k}$ we obtain a unitary isomorphism 
\[
	\tau_n \colon F(\C^n) \to W^{\otimes n}\ .
\]
The associativity of $\tau_{\C^m,\C^n}$ implies that the following diagram commutes
\[
	\begin{tikzcd}
		F(\C^m \oplus \C^n) \ar[r,"\tau_{m,n}"] \ar[d,"\tau_{m+n}" left] & F(\C^m) \otimes F(\C^n) \ar[d,"\tau_m \otimes \tau_n"]\\
		W^{\otimes (m + n)} \arrow[r,equal] & W^{\otimes m} \otimes W^{\otimes n}
	\end{tikzcd}
\]
Moreover, we have 
\[
	\tau_{m,n} \circ F(u_1 \oplus u_2) = F(u_1) \otimes F(u_2) \circ \tau_{m,n}
\]
by naturality of $\tau_{m,n}$. Define $\kappa_n \colon U(n) \to U(W^{\otimes n})$ by $u \mapsto \tau_n \circ F(u) \circ \tau_n^{-1}$. For $u_1 \in U(m)$ and $u_2 \in U(n)$ we obtain
\begin{align}
	\kappa_{m+n}(u_1 \oplus u_2) &= (\tau_m \otimes \tau_n) \circ \tau_{m,n} \circ F(u_1 \oplus u_2) \circ \tau_{m,n}^{-1} \circ (\tau_m \otimes \tau_n)^{-1} \\
	&= (\tau_m \circ F(u_1) \circ \tau_m^{-1}) \otimes (\tau_n \circ F(u_2)\circ \tau_n^{-1}) \notag \\
	&= \kappa_m(u_1) \otimes \kappa_n(u_2)\ . \notag
\end{align}
This implies that the collection of all $\kappa_n$ combines to a (strict) monoidal functor $\kappa \colon \BUpl \to \BUte{W}$.
Thus, we obtain a continuous monoid homomorphism $\lvert \kappa \rvert \colon \lvert \BUpl \rvert \to \lvert \BUte{W} \rvert$, which induces a map $B\lvert \kappa \rvert \colon B\lvert \BUpl \rvert \to B\lvert \BUte{W} \rvert$ that restricts to 
\[
	\tau_F \colon B(BU_{\oplus} \times \{0\}) \to B(\BUloc{d} \times \{0\})\ .
\]
The codomain is equivalent to the $1$-connected cover of $BGL_1\!\left(KU\left[\tfrac{1}{d}\right]\right)$ classifying twists of localised complex $K$-theory. 

\begin{remark} \label{rem:ext_power}
In the construction it suffices to have an exponential functor $F \colon \Viso \to \Viso$ on the isomorphism groupoid. An important example of a functor of this type without an extension to $\Vfin$ is the top exterior power functor $\extp^{\rm top}$. The map $BU(n) \to BU(F(\C^n)) \simeq BU(1)$ induced by $\extp^{\rm top}$ agrees with the one induced by the determinant and the corresponding map $\tau_{\det} \colon BBU_{\oplus} \to BBU_{\otimes}$ factors as
\[
	\tau_{\det} \colon BBU_{\oplus} \to BBU(1) \to BBU_{\otimes}\ .
\] 
\end{remark}

\subsection{Detecting the cohomology of $SU(n)$ via $\C P^n$}
In this section we will construct maps $f_n \colon S \C P^{n-1} \to SU(n)$ detecting the generators in cohomology. This will be applied to compute the rational characteristic classes of the twists $\tau_F$ in Sec.~\ref{sec:twist_over_SUn}. Fix $n \in \N$, let $e \in M_n(\C)$ be the rank~$1$-projection onto the first component of $\C^n$ and let $\omega \colon I \to SU(n)$ be given by
\[
 	\omega(t) = (I_n - (1 - \exp(2\pi i t))e) \exp\left(-2\pi i \tfrac{t}{n}\right) \ ,
\] 
where $I_n \in M_n(\C)$ denotes the identity matrix. Now consider the map $\widehat{f}_n \colon I \times SU(n) \to SU(n)$ defined by
\[
	\widehat{f}_n(t,u) = u\,\omega(t)\,u^*
\]
Observe that $\widehat{f}_n(t,u)$ only depends on the class of $u$ in the homogeneous space $SU(n)/S(U(1) \times U(n-1))$, which is homeomorphic to $\C P^{n-1}$ via the map that sends $[v] \in SU(n)/S(U(1) \times U(n-1))$ to $[x_1: \dots:x_n] \in \C P^{n-1}$ with $x = ve_1$. Moreover, $\omega(0) = I_n$ and $\omega(1) = \exp\!\left(2 \pi i \tfrac{1}{n}\right)$. Hence, $\widehat{f}_n$ induces a well-defined map 
\begin{equation} \label{eqn:map_fn}
	f_n \colon S \C P^{n-1} \to SU(n)
\end{equation}
from the (unreduced) suspension of $\C P^{n-1}$ to $SU(n)$. 

The cohomology rings of these spaces are well-known: For $SU(n)$ we obtain an exterior algebra over $\Z$ on generators in odd degrees between $3$ and $2n-1$, i.e.\ 
\[
	H^*(SU(n),\Z) \cong \Lambda_{\Z}[a_3,a_5, \dots, a_{2n-1}]\ .
\] 
Up to a sign the generators $a_{2k+1} \in H^{*}(SU(n),\Z)$ are obtained inductively as follows: Let $p_{SU(n)} \colon SU(n) \to S^{2n-1}$ be defined by $p_{SU(n)}(v) = ve_n$. This is a fibration with fibre $SU(n-1)$ to which the Leray-Hirsch theorem applies. We obtain the isomorphism 
\[
	H^*(SU(n),\Z) \cong H^*(SU(n-1),\Z) \otimes H^*(S^{2n-1},\Z)
\]
For $n = 2$ the map $p_{SU(2)}$ is a homeomorphism and choosing a generator of $H^3(S^3,\Z)$ fixes $a_3 \in H^3(SU(2),\Z)$ as well. For $n > 2$, the generators $a_3, \dots, a_{2n-3} \in H^*(SU(n),\Z)$ are all fixed by the fact that the restriction to $SU(n-1)$ maps them to the corresponding generators there. The remaining element $a_{2n-1} \in H^{2n-1}(SU(n),\Z)$ is the image of a generator of $H^{2n-1}(S^{2n-1},\Z)$ under $p_{SU(n)}^* \colon H^{2n-1}(S^{2n-1},\Z) \to H^{2n-1}(SU(n),\Z)$.

We will also need the cohomology ring of $U(n)$, which is a similar exterior algebra, but with one more generator in degree~$1$:
\[
	H^*(U(n),\Z) \cong \Lambda_{\Z}[c_1,c_3, \dots, c_{2n-1}]\ .
\] 
The cohomology ring of $\C P^{n-1}$ is a truncated polynomial ring on one generator $t$ in degree $2$, i.e.\ 
\[
	H^*(\C P^{n-1}, \Z) \cong \Z[t]/(t^n)\ ,
\]
where the element $t \in H^2(\C P^{n-1},\Z)$ is the Chern class of the canonical line bundle over $\C P^{n-1}$. On cohomology the inclusion map $\iota \colon SU(n) \to U(n)$ sends $c_1$ to $0$ and identifies $c_{2k+1}$ with $a_{2k+1}$ for $k \in \{1, \dots, n-1\}$. Thus, to understand $f_n^* \colon H^*(SU(n),\Z) \to H^*(S\C P^{n-1},\Z)$ it suffices to consider
\[
	\iota \circ f_n \colon S\C P^{n-1} \to U(n)\ .
\]
However, this map is homotopic to 
\begin{equation} \label{eqn:def_of_hn}
	h_n \colon S \C P^{n-1} \to U(n) \quad , \quad h_n([t,u]) = I_n - (1 - \exp(2\pi i t))u e u^*
\end{equation}
The class $c_{2n-1} \in H^{2n-1}(U(n),\Z)$ is the image of a generator under the homomorphism induced by $p = p_{U(n)} \colon U(n) \to S^{2n-1}$ with $p(v) = ve_n$. We define $g_n = p \circ h_n \colon S \C P^{n-1} \to S^{2n-1}$. The inclusion $SU(n-1) \to SU(n)$ into the upper left hand corner induces an embedding $S\C P^{n-2} \to S\C P^{n-1}$. With respect to the homeomorphism $SU(n)/S(U(1) \times U(n-1)) \cong \C P^{n-1}$ described above this map corresponds to the suspension of $\C P^{n-2} \subset \C P^{n-1}$ which identifies $[x_1 : \dots : x_{n-1}]$ with $[x_1 : \dots : x_{n-1} : 0]$. The effect of $g_n$ on the top-dimensional cell of $S\C P^{n-1}$ was described in \cite[Lem.~6.1]{paper:Yokota} with the following result: 

\begin{lemma} \label{lem:gn_homeomorphism}
	The map $g_n$ induces a homeomorphism 
	\[
		\overline{g}_n \colon S \C P^{n-1}/S \C P^{n-2} \to S^{2n-1}\ .
	\]
\end{lemma}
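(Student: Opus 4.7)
The plan is to exhibit $\overline{g}_n$ as a continuous bijection from a compact space to a Hausdorff space, from which the homeomorphism statement follows.

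First I would verify that the map is well defined, i.e.\ that $g_n$ collapses $S\C P^{n-2}$ to $\{e_n\}$. For $u \in SU(n-1)$ embedded into $SU(n)$ as the upper left block, $u$ fixes $e_n$ and the rank one projection $ueu^*$ has range in the orthogonal complement of $e_n$, so $h_n([t,u])\,e_n = e_n$ for every $t \in I$. Next I would identify the domain via its CW structure: each $2k$-cell of $\C P^{n-1}$ yields a $(2k+1)$-cell in $S\C P^{n-1}$, so the relative CW pair $(S\C P^{n-1}, S\C P^{n-2})$ has a single cell of dimension $2n-1$, making the quotient $S\C P^{n-1}/S\C P^{n-2}$ homeomorphic to $S^{2n-1}$. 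In particular the domain is compact, so a continuous bijection $\overline{g}_n$ will automatically be a homeomorphism.

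The core of the argument is then to show bijectivity. Choosing a unit representative $x \in S^{2n-1} \subset \C^n$ for $[x] \in \C P^{n-1}$ and $u \in SU(n)$ with $ue_1 = x$, one has $ueu^* = xx^*$, so
\[
	g_n(t,[x]) = e_n - (1-e^{2\pi i t})\,\bar x_n\,x,
\]
which depends only on $[x]$ (being invariant under $x \mapsto e^{i\alpha}x$). For injectivity, if $g_n(t,[x]) = g_n(s,[y]) = v \neq e_n$, then both $(1-e^{2\pi i t})\bar x_n x$ and $(1-e^{2\pi i s})\bar y_n y$ equal the nonzero vector $e_n - v$; comparing directions forces $[x]=[y]$, and then equating the scalar coefficients forces $t=s$. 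For surjectivity, given $v \in S^{2n-1}\setminus\{e_n\}$ set $w = e_n-v$; the identity $\|v\|^2=1$ yields $\|w\|^2 = 2\operatorname{Re}(w_n)$, so in particular $w_n\neq 0$. With $x := w/\|w\|$, a short calculation using $2\operatorname{Re}(w_n) - \bar w_n = w_n$ shows that $\|w\|/\bar x_n = 2\operatorname{Re}(w_n)/\bar w_n$ lies on the unit circle centred at $1$. Since $t \mapsto 1-e^{2\pi i t}$ restricts to a bijection of $(0,1)$ onto this circle minus the origin, there is a unique $t \in (0,1)$ with $g_n(t,[x]) = v$.

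The principal obstacle I expect is the bookkeeping for surjectivity, namely the observation that $\{1 - e^{2\pi i t} : t \in (0,1)\}$ is precisely the punctured circle of radius $1$ centred at $1$, and matching it cleanly with the expression $\|w\|/\bar x_n$ produced from an arbitrary $v \in S^{2n-1}\setminus\{e_n\}$. The remaining steps — well-definedness on $S\C P^{n-2}$, identification of the quotient as $S^{2n-1}$, and injectivity — are essentially formal.
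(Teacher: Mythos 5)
Your proof is correct. Note that the paper itself does not prove this lemma; it simply attributes the computation to Yokota's Lemma~6.1 and records the result, so there is no in-paper argument to compare against directly. Your self-contained computation replaces that reference with a direct check. The structure is the standard one for such statements (continuous bijection from compact to Hausdorff) and all three pieces go through: $g_n([t,u])$ depends only on $t$ and $[ue_1]\in\C P^{n-1}$ because $ueu^* = xx^*$ for $x=ue_1$, the formula $g_n(t,[x]) = e_n - (1-e^{2\pi i t})\bar{x}_n x$ vanishes to $e_n$ precisely when $x_n=0$ or $t\in\{0,1\}$, and the algebra for injectivity and surjectivity (that $\|w\|^2 = 2\operatorname{Re}(w_n)$ and that $\|w\|/\bar x_n = 2\operatorname{Re}(w_n)/\bar w_n$ lies on the unit circle centred at $1$) is right. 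One small point of caution in exposition: the identification $S\C P^{n-1}/S\C P^{n-2}\cong S^{2n-1}$ you invoke is really the statement that collapsing $SA$ inside $SX$ gives the \emph{reduced} suspension $\Sigma(X/A)$; for $X/A = \C P^{n-1}/\C P^{n-2}\simeq S^{2n-2}$ this does give $S^{2n-1}$, but it is worth making explicit that one passes to reduced suspension since the two cone points get identified with the collapsed $S\C P^{n-2}$. With that clarification, the argument is complete and would serve as a drop-in proof of the lemma independent of Yokota's reference.
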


Let $\sigma \colon H^k(SX,\Z) \to H^{k-1}(X,\Z)$ for $k > 1$ be the suspension isomorphism. We are now ready to prove that the homomorphisms $f_n^*$ together with the suspension isomorphisms detect the generators of $H^*(SU(n),\Z)$. 
\begin{lemma} \label{lem:map_detects_coh_classes}
	The map $f_n$ defined in \eqref{eqn:map_fn} induces the following homomorphism on cohomology  
	\[
		\sigma \circ f^*_n \colon H^{2k+1}(SU(n),\Z) \to H^{2k}(\C P^{n-1},\Z)\ .
	\] 
	The generators $a_{2k+1} \in H^{*}(SU(n),\Z)$ can be chosen in such a way that the above homomorphism satisfies 
	\[
		(\sigma \circ f_n^*)(a_{2k+1}) = t^{k}
	\]
	for $k \in \{1, \dots, n-1\}$. 
\end{lemma}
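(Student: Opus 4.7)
The plan is to prove this by induction on $n$, reducing the computation of $f_n^{*}$ to that of $h_n^{*}$ via the homotopy $\iota \circ f_n \simeq h_n$ recorded around \eqref{eqn:def_of_hn}. Since $\iota^{*}(c_{2k+1}) = a_{2k+1}$ for $k \geq 1$, we have $f_n^{*}(a_{2k+1}) = h_n^{*}(c_{2k+1})$, so it suffices to produce generators $c_{2k+1} \in H^{2k+1}(U(n),\Z)$ for which $\sigma \circ h_n^{*}(c_{2k+1}) = t^{k}$, and then the corresponding $a_{2k+1}$ will be forced by the cohomology map induced by $\iota$.

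First I would verify compatibility of $h_n$ with the two inclusions $U(n-1) \hookrightarrow U(n)$ (as the upper-left $(n-1) \times (n-1)$-block) and $S\C P^{n-2} \hookrightarrow S\C P^{n-1}$ (as the corresponding suspended subspace). A direct inspection of the formula \eqref{eqn:def_of_hn} shows that if $u \in U(n)$ lies in the upper-left block, then $u e u^{*}$ remains the rank-one projection inside the block, so $h_n$ restricted to $S\C P^{n-2}$ coincides with $h_{n-1}$ composed with the block inclusion. Combined with the inductive hypothesis and naturality of the suspension isomorphism, this yields
\[
\sigma \circ h_n^{*}(c_{2k+1})\big|_{\C P^{n-2}} = t^{k} \quad \text{for } k \leq n-2.
\]
Since the restriction map $H^{2k}(\C P^{n-1},\Z) \to H^{2k}(\C P^{n-2},\Z)$ is an isomorphism for $k \leq n-2$, this determines $\sigma \circ h_n^{*}(c_{2k+1}) = t^{k}$ in that range.

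For the top class $k = n-1$, I would use that $c_{2n-1} = p^{*}\gamma$ for a chosen generator $\gamma \in H^{2n-1}(S^{2n-1},\Z)$, whence
\[
h_n^{*}(c_{2n-1}) = (p \circ h_n)^{*}(\gamma) = g_n^{*}(\gamma).
\]
By Lemma \ref{lem:gn_homeomorphism}, $g_n$ factors through the quotient as $S\C P^{n-1} \to S\C P^{n-1}/S\C P^{n-2} \xrightarrow{\overline{g}_n} S^{2n-1}$ with $\overline{g}_n$ a homeomorphism, so $g_n^{*}(\gamma)$ is a generator of $H^{2n-1}(S\C P^{n-1},\Z)$. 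Since $H^{2n-1}(S\C P^{n-1},\Z) \cong \Z$ is generated by $\sigma^{-1}(t^{n-1})$, we conclude $\sigma \circ h_n^{*}(c_{2n-1}) = \pm t^{n-1}$; choosing $\gamma$ (equivalently, $a_{2n-1}$) with the correct sign gives equality. The inductive choice of $a_{2k+1}$ for $k < n-1$ is then forced by the requirement that they restrict to the chosen generators at stage $n-1$, matching the construction of the generators described in the excerpt.

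The base case is $n = 2$, where $SU(2) \cong S^3$ and $S\C P^{1} \cong S^{3}$, and the claim reduces via Lemma \ref{lem:gn_homeomorphism} to the fact that $g_2$ is a homotopy equivalence. The main technical annoyance I anticipate is not any single step but rather bookkeeping the sign conventions consistently across the induction together with verifying cleanly that the scalar factor $e^{-2\pi i t/n}$ appearing in $\omega$ indeed deforms away to produce the homotopy $\iota \circ f_n \simeq h_n$ through maps that respect the quotient defining the unreduced suspension.
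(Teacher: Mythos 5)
Your proposal follows the same inductive strategy as the paper: reduce to $h_n^*$ via $\iota^*(c_{2k+1})=a_{2k+1}$, use the commuting square with $j_{n-1}$ and the block inclusion $\iota_n$ to handle $k\le n-2$, and invoke Lemma~\ref{lem:gn_homeomorphism} together with $c_{2n-1}=p^*\gamma$ for the top class. The only point the paper similarly asserts without detail—the homotopy $\iota\circ f_n\simeq h_n$ obtained by deforming away the scalar $e^{-2\pi i t/n}$—is indeed unproblematic, since the scalar is independent of $u$ and equals $1$ at $t=0$, so the straight-line homotopy descends to the unreduced suspension.
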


\begin{proof}
We will prove the statement by induction over $n$. For $n = 2$ the map $p_{SU(2)}$ is a homeomorphism and $p_{SU(2)} \circ f_2$ is homotopic to $g_2 \colon S\C P^{1} \to S^3$, which in turn is a homotopy equivalence by Lemma~\ref{lem:gn_homeomorphism} and the contractibility of $S\C P^0 \cong I$. Thus, if $x_{3} \in H^3(S^3,\Z)$ is a generator, then $\sigma \circ f_2^* \circ p_{SU(2)}^*(x_3)$ has to agree with $\pm t \in H^2(\C P^1,\Z) \cong \Z$, but $p_{SU(2)}^*(x_3) = a_3$.

Let $\iota \colon SU(n) \to U(n)$ be the inclusion map. As was already pointed out above we have $\iota^*(c_{2k+1}) = a_{2k+1}$ for $k \in \{1, \dots, n-1\}$ and 
	\(
		f_n^* \circ \iota^* = h_n^*
	\).
	Hence, it suffices to show that $h_n^*(c_{2k+1}) = t^k$. Assume that we have proven this statement in all dimensions $m \in \{2, \dots, n-1\}$. To prove that it follows for $m = n$ as well note that the following diagram commutes
	\[
		\begin{tikzcd}
			S\C P^{n-2} \ar[d,"h_{n-1}" left] \ar[r,"j_{n-1}"]  & S \C P^{n-1} \ar[d,"h_n" right] \\
			U(n-1)  \ar[r, "\iota_n" below] & U(n) 
		\end{tikzcd}
	\]
	where $j_{n-1}$ and $\iota_n$ denote the inclusions described above. Observe that $\iota_n \colon U(n-1) \to U(n)$ induces the homomorphism 
	\begin{gather*}
		\iota_n^* \colon \Lambda_{\Z}[c_1, \dots, c_{2n-3}, c_{2n-1}] \to \Lambda_{\Z}[c_1', \dots, c_{2n-3}'] \ ,\\
		\iota_n^*(c_{2k-1}) = 
		\begin{cases}
			c_{2k-1}' & \text{if } 1 \leq k \leq n-1 \\
			0 & \text{if } k = n
		\end{cases}\ .
	\end{gather*}
	whereas $j_{n-1}^*$ can be identified with the quotient homomorphism 
	\[
		j_{n-1}^* \colon \Z[t]/(t^{n}) \to \Z[t']/((t')^{n-1}) \ .
	\] 
	For $k \in \{1, \dots, n-2\}$ our induction hypothesis implies 
	\[
		(t')^k = h^*_{n-1}(c_{2k+1}') = h^*_{n-1}(\iota_n^*(c_{2k+1})) = j^*_{n-1}(h_n^*(c_{2k+1}))
	\]
	and therefore $h_n^*(c_{2k+1}) = t^k$. Hence, it only remains to be seen that we can choose the generators $c_{2n-1}$ such that $h_n^*(c_{2n-1}) = t^{n-1}$. It suffices to prove that $h_n^* \circ p^* = g_n^*$ is an isomorphism. The long exact sequence of the pair $(S\C P^{n-1},S\C P^{n-2})$ shows that $H^{2n-1}(S \C P^{n-1}, S \C P^{n-2},\Z) \to H^{2n-1}(S \C P^{n-1},\Z)$ is an isomorphism. The domain is isomorphic to the cohomology of the quotient and the commutative diagram
	\[
		\begin{tikzcd}
			H^{2n-1}(S^{2n-1},\Z) \ar[d,"\overline{g}_n^*" left] \ar[equal]{r} & H^{2n-1}(S^{2n-1},\Z) \ar[d,"g_n"] \\
			H^{2n-1}(S \C P^{n-1}/S \C P^{n-2},\Z) \ar[r,"\cong" below] &H^{2n-1}(S \C P^{n-1},\Z)
		\end{tikzcd}
	\]
	together with Lemma~\ref{lem:gn_homeomorphism} proves that $g_n$ is indeed an isomorphism.
\end{proof}

\subsection{Twists over $SU(n)$ from Bott periodicity} \label{sec:twist_over_SUn}
Bott periodicity yields a homotopy equivalence \[ B(BU_{\oplus} \times \Z) \simeq U\ .\] The map $B(BU_{\oplus} \times \Z) \to B\Z$ induced by the projection corresponds to the determinant map on the right hand side. Therefore, Bott periodicity restricts to an equivalence $BBU_{\oplus} = B(BU_{\oplus} \times \{0\}) \simeq SU$ of the corresponding homotopy fibres. The group $SU$ is the colimit over all $SU(n)$ over the embeddings $SU(n) \to SU(n+1)$. In particular, we obtain twists of the form
\begin{equation} \label{eqn:twists_SU(n)}
\begin{tikzcd}
	\tau_F^n \colon SU(n) \ar[r] & SU \simeq BBU_{\oplus} \ar[r,"\tau_F"] & B\BUloc{d} 
\end{tikzcd}
\end{equation}
These twists over $SU(n)$ provide a (non-equivariant) generalisation of the one described by the basic gerbe \cite{paper:Meinrenken, paper:FreedHopkinsTelemanI, paper:MurrayStevenson} in the following sense: Let $\tau_{\det}$ be the map associated to the exponential functor $\extp^{\rm top} \colon \Viso \to \Viso$ as in Rem.~\ref{rem:ext_power}. As we will see below, the associated twist $\tau^n_{\det}$ corresponds to a generator $a_3 \in H^3(SU(n),\Q) \cong H^3_{SU(n)}(SU(n),\Q) \cong \Q$ and therefore agrees with the one given by the basic gerbe. 

The rationalisation of the generalised cohomology theory associated to the infinite loop space $BBU_{\otimes}$ is well-understood \cite[Prop.~3.5]{paper:Teleman}. Up to equivalence it agrees with rational ordinary cohomology in odd degrees starting with $3$, i.e.
\begin{align} \label{eqn:rational_equiv}
	\left(BBU_{\otimes}\right)_{\Q} \simeq \prod_{n \in \N} K(2n+1,\Q)\ .
\end{align}
To obtain a map inducing the above equivalence note that the Chern character provides a $E_{\infty}$-ring map that identifies rational topological $K$-theory $KU_{\Q}$ with even rational cohomology $H\Q$. It restricts to an equivalence $BGL_1(KU_{\Q}) \to BGL_1(H{\Q})$. Restricting to the $1$-connected cover and composing this transformation with the natural logarithm on rational cohomology \cite[Sec.~2.5, Prop.~2.6]{paper:Rezk}, we obtain the equivalence \eqref{eqn:rational_equiv}.

Composing \eqref{eqn:twists_SU(n)} with the natural map $B\BUloc{d} \to (BBU_{\otimes})_{\Q}$ and then applying \eqref{eqn:rational_equiv} we obtain a class
\begin{align} \label{eqn:delta}
	\delta^{F,n} = \delta^{F,n}_3 + \delta^{F,n}_5 + \delta^{F,n}_7 + \dots  \quad \in H^{\text{odd}}(SU(n),\Q)\ ,
\end{align}
which can be expressed in terms of the generators $a_{2k+1} \in H^{2k+1}(SU(n),\Q)$. We will employ Lemma~\ref{lem:map_detects_coh_classes} to make this explicit. Hence, we need to understand $f_n^*(\delta^{F,n}) \in H^{\text{odd}}(S \C P^{n-1}, \Q) \cong H^{\text{even}}(\C P^{n-1}, \Q)$. A first step in this direction is provided by the next lemma.

\begin{lemma} \label{lem:classifying_map_plus}
Let $f_n \colon S \C P^{n-1} \to SU(n)$ be the map defined in \eqref{eqn:map_fn}. Let $\alpha_n \colon \C P^{n-1} \to BU_{\oplus}$ be the continuous map that is adjoint to 
\[
	\begin{tikzcd}
		\Sigma \C P^{n-1} \simeq S\C P^{n-1} \ar[r,"f_n"] & SU(n) \ar[r] & SU \simeq BBU_{\oplus}
	\end{tikzcd}
\] 
Then $\alpha_n$ classifies the virtual vector bundle $[H] - [\C]$, where $H$ is the tautological line bundle over $\C P^{n-1}$. 
\end{lemma}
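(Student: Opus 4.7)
The strategy is to compute the $K$-theory class $\alpha_n^* E_{univ} \in K^0(\C P^{n-1})$ via the Bott clutching construction, using an auxiliary comparison with the map $h_n$ of \eqref{eqn:def_of_hn}. I would first exploit the homotopy $\iota\circ f_n \simeq h_n$ between maps $S\C P^{n-1} \to U(n)$. Unlike $\iota\circ f_n$, whose two suspension endpoints map to the distinct scalar matrices $I_n$ and $e^{-2\pi i/n}I_n$, the map $h_n([t,u]) = I_n - (1-e^{2\pi i t})ueu^*$ has both endpoints equal to $I_n$ and hence gives a genuine family of loops in $U(n)$ parametrised by $[v]\in\C P^{n-1}$. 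Each loop $t \mapsto (1-p_{[v]}) + e^{2\pi i t}p_{[v]}$, where $p_{[v]}$ is the orthogonal projection onto $\C\cdot v$, acts by multiplication by $e^{2\pi i t}$ on $\mathrm{Im}(p_{[v]})$ and trivially on its orthogonal complement.

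The next step is to apply the Bott clutching in families. Since $[v]\mapsto \mathrm{Im}(p_{[v]})$ is exactly the tautological line bundle $H\to \C P^{n-1}$, the clutching construction yields the rank-$n$ bundle
\[
E \;\cong\; (H \boxtimes L)\oplus (H^\perp \boxtimes \underline{\C})
\]
on $\C P^{n-1}\times S^2$, where $L\to S^2$ is the tautological line bundle. In $K^0(\C P^{n-1}\times S^2) \cong K^0(\C P^{n-1})[b]/(b^2)$ with $b = [L]-[\C]$, this computes to $[E] = n + [H]\cdot b$. Reading off the Bott adjoint from the coefficient of $b$, the map $\C P^{n-1}\to \Omega U$ adjoint to $h_n$ classifies $[H]\in K^0(\C P^{n-1})$, a virtual bundle of rank $1$.

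Finally, I would pass from $\Omega U$ to $\Omega SU \simeq BU_\oplus$. The pointwise identity
\[
h_n([t,u]) \;=\; e^{2\pi i t/n}\cdot (\iota \circ f_n)([t,u])
\]
shows that $h_n$ is obtained from $f_n$ (a family of paths in $SU(n)$ from $I_n$ to $e^{-2\pi i/n}I_n$) by concatenating with the scalar $U(1)$-path $e^{-2\pi i(1-s)/n}I_n$. The correct adjoint of $f_n$ via $SU\simeq BBU_\oplus$ instead requires closing within $SU(n)$; the two closings differ, as loops in $U(n)$, by the standard generator $s \mapsto e^{2\pi i s}I_n$ of $\pi_1 U(n)$, which as a $[v]$-constant family has Bott adjoint $1=[\C]\in K^0(\C P^{n-1})$. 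Subtracting yields $\alpha_n^* E_{univ} = [H] - [\C]$.

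The main obstacle is this final reconciliation: showing that the scalar-path correction contributes exactly the trivial line bundle class in $K$-theory, rather than some other scalar multiple. This depends on the compatibility of Bott periodicity with the fibration $SU \to U \xrightarrow{\det} U(1)$, under which $\Omega U \simeq BU\times\Z \to \Omega U(1) = \Z$ corresponds to the virtual-rank map on $K^0$, together with the standard identification of the scalar loop $e^{2\pi i s}I$ as the generator of $\pi_1 U$ whose Bott adjoint is $[\C] \in K^0(\mathrm{pt}) = \Z$. A clean alternative would be to bypass the path/loop issue entirely by invoking the rational Chern character together with Lemma~\ref{lem:map_detects_coh_classes} to verify the identity $\mathrm{ch}(\alpha_n^* E_{univ}) = e^t - 1 = \mathrm{ch}([H]-[\C])$, then conclude by injectivity of $\mathrm{ch}$ on the torsion-free ring $K^0(\C P^{n-1})$.
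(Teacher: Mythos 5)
Your proposal reaches the right conclusion, but it takes a genuinely different technical route from the paper. Both arguments share the initial reduction: use the homotopy $\iota\circ f_n \simeq h_n$ to replace $f_n$ by $h_n$, whose suspension endpoints both equal $I_n$. From there the paper invokes Harris's explicit simplicial model $U \cong \lvert X_\bullet\rvert$, where simplices are tuples of orthogonal projections in the compacts and realisation is via spectral decomposition, then traces the map $[u]\mapsto ueu^*$ through the chain of equivalences to $B(\coprod BU(n))$ and identifies it with the classifying map of $H$. You instead read $h_n$ as a family of loops $t\mapsto (1-p_{[v]}) + e^{2\pi it}p_{[v]}$ and apply the classical clutching construction over $\C P^{n-1}\times S^2$, computing $[E] = n + [H]\,b$ and reading off the Bott coefficient. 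This is more elementary and avoids the Harris machinery. Your handling of the $\Omega U$ versus $\Omega SU$ transition is actually more explicit than the paper's: you correctly observe that the adjoint of $h_n$ has $\det$-winding $1$ and classifies the rank-one class $[H]$, that $\alpha_n$ by construction lands in the zero-component $\Omega SU \simeq BU_\oplus$, and that the discrepancy is a constant family of scalar loops contributing $[\C]$. The paper phrases this same point as "the reduced class of $H$ corresponds to $[H]-[\C]$" but does not make the component bookkeeping explicit.

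Two small imprecisions worth tightening. First, the passage from "$h_n([t,u]) = e^{2\pi i t/n}(\iota\circ f_n)([t,u])$" to "obtained by concatenating with a scalar path" conflates pointwise multiplication of paths with concatenation; they agree up to homotopy via Eckmann--Hilton, but the precise statement should be that any loop closure $\bar f_n = f_n * \gamma$ with $\gamma$ in $SU(n)$ and the loop $h_n$ differ by a constant $[v]$-family of loops in $\pi_1 U(n)$ that is detected by $\det$-winding $1$, hence equals the generator. Second, your Chern character fallback needs one additional ingredient: Lemma~\ref{lem:map_detects_coh_classes} controls $\sigma\circ f_n^*$ on $H^*(SU(n))$, not directly $\mathrm{ch}(\alpha_n^*E_{\mathrm{univ}})$; one must observe that the universal Chern character classes on $BU_\oplus = \Omega SU$ transgress to the generators $a_{2k+1}$, which is how the paper itself normalises those generators. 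With these points addressed the alternative would be a clean, purely rational proof of the lemma.
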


\begin{proof}
	Let $\iota \colon SU(n) \to U(n)$ and $\iota_{\infty} \colon SU \to U$ be the inclusion maps. Let $h_n \colon S\C P^{n-1} \to U(n)$ be the map defined in \eqref{eqn:def_of_hn}. In the following diagram the triangle commutes up to homotopy, while the square commutes.
	\[
		\begin{tikzcd}
			\Sigma \C P^{n-1} \simeq S\C P^{n-1} \ar[r,"f_n"] \ar[dr,"h_n" below] & SU(n) \ar[r,"\alpha"] \ar[d,"\iota" left]  & SU \ar[d,"\iota_{\infty}"] \\
			 & U(n) \ar[r, "\alpha" below]  & U 
		\end{tikzcd}
	\]
	This implies that the two maps $\C P^{n-1} \to \Omega U$ adjoint to $\iota_{\infty} \circ \alpha \circ f_n$ and $\alpha \circ h_n$, respectively, agree up to homotopy. Moreover, the homotopy equivalences $\Omega SU \simeq BU_{\oplus}$ and $\Omega U \simeq BU_{\oplus} \times \Z$ identify $\Omega \iota_{\infty} \colon \Omega SU \to \Omega U$ with the inclusion map $BU_{\oplus} \to BU_{\oplus} \times \Z$ onto the $0$-component. Hence it remains to be proven that the map adjoint to $\alpha \circ h_n$ classifies the stable isomorphism class of $H$ in reduced $K$-theory, which corresponds to $[H] - [\C]$, when $\widetilde{K}^0(\C P^{n-1})$ is viewed as formal differences of unreduced classes.
	
	To see why this is true, we have to take a look at the equivalence $U \simeq B(\coprod_{n \in \N_0} BU(n))$. We follow the proof of Bott periodicity given in \cite{paper:Harris}. Let $\K$ be the compact operators on a separable infinite dimensional Hilbert space~$H$ and denote by $\mathcal{P}(\K)$ the space of all projections in $\K$ with the norm topology. Let $\lvert X_{\bullet}\rvert$ be the geometric realisation of the simplicial space $(X_n)_{n \in \N_0}$ with $X_0 = \{\text{pt}\}$ and
	\[
		X_n = \left\{ (p_1, \dots, p_n) \in \mathcal{P}(\K)^n\ |\ p_ip_j = 0 \text{ for all } i \neq j \right\}
	\]
	for $n > 0$, where the face maps take sums of projections and the degeneracy maps add a  zero projection to the tuple. The spectral decomposition of elements in $U$ yields a homeomorphism \cite[Sec.~2, p.~451]{paper:Harris}
	\[
		s \colon \lvert X_{\bullet} \rvert \to U \quad, \quad [(t_1, \dots, t_n), (p_1, \dots, p_n)] \mapsto \exp\left( 2 \pi i \sum_{i=1}^n t_ip_i \right)\ .
	\]
	Let $\Proj{e}{\K} \subset \mathcal{P}(\K)$ be the subspace of rank $1$ projections. Note that this has the homotopy type of the classifying space $BU(1)$. Let $j \colon S\Proj{e}{\K} \to \lvert X_{\bullet} \rvert$ be the canonical map that identifies $[t,p] \in S\Proj{e}{\K}$ with the corresponding point in $\lvert X_{\bullet}\rvert$. 	The map $h_n$ can be rewritten as follows
	\[
		h_n([t,u]) = \exp(2\pi i t\,ueu^*)\ .
	\]
	Thus, if we define $k_n \colon \C P^{n-1} \to \Proj{e}{\K}$ by $k_n([u]) = ueu^* \in \Proj{e}{\K} \subset X_1$, then the following diagram commutes
	\[
		\begin{tikzcd}
			S \C P^{n-1} \ar[d,"Sk_n" left] \ar[r,"\alpha \circ h_n"] & U \\
			S \Proj{e}{\K} \ar[r,"j" below] & \lvert X_{\bullet} \rvert \ar[u,"s" right, "\cong" left]
		\end{tikzcd}
	\]
	The proof of Bott periodicity in \cite{paper:Harris} proceeds by constructing two simplicial spaces $(Z_n)_{n \in \N_0}$ and $(Y_n)_{n \in \N_0}$ and equivalences 
	\[
		\begin{tikzcd}
			U \cong \lvert X_{\bullet}\rvert & \ar[l,"\simeq" above] \lvert Z_{\bullet}\rvert \ar[r,"\simeq"] & \lvert Y_{\bullet}\rvert \simeq B\left(\coprod_{n \in \N_0} BU(n)\right)\ .
		\end{tikzcd}
	\]
	Tracing the map $j$ through these equivalences we see that it agrees up to homotopy with
	\[
		\begin{tikzcd}
			\bar{j} \colon SBU(1) \ar[r,"\circled{1}"] & \Sigma BU(1)_+ \ar[r,"\circled{2}"] &  B \left(\coprod_{n \in \N_0} BU(n) \right) 
		\end{tikzcd}
	\] 
	where $BU(1)_+$ denotes the space $BU(1)$ with a disjoint basepoint, $\circled{\footnotesize{1}}$ is the quotient map and $\circled{\footnotesize{2}}$ is induced by the inclusion of the $1$-skeleton. Thus, the bundle classified by the map adjoint to $\Sigma \C P^{n-1} \simeq S\C P^{n-1} \to B(\coprod_{n \in \N_0} BU(n))$ agrees with the one classified by $k_n \colon \C P^{n-1} \to \Proj{e}{\K}$. Up to the homotopy equivalences described above this agrees with the quotient map
	\[
		U(n)/U(1) \times U(n-1) \to U(H)/U(1) \times U(e_1^{\perp})
	\]
	classifying the tautological bundle $H \cong U(n)/U(n-1) \to \C P^{n-1}$.
\end{proof}

\begin{corollary} \label{cor:classifying_map_tensor}
	Let $f_n \colon S \C P^{n-1} \to SU(n)$ be the map defined in \eqref{eqn:map_fn}. The map $\psi_n$ adjoint to 
	\[
		\begin{tikzcd}
			f_n^*\tau_F^n \colon \Sigma \C P^{n-1} \simeq S \C P^{n-1} \ar[r,"f_n"] & SU(n) \ar[r,"\tau_F^n"] & B\BUloc{d}  			
		\end{tikzcd}
	\]
	classifies the bundle $\frac{1}{d} F(H)$, where $H \to \C P^{n-1}$ denotes the tautological line bundle.
\end{corollary}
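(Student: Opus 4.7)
The plan is to reduce to Lem.~\ref{lem:classifying_map_plus} and exploit that $\Omega\tau_F$ is an $H$-space homomorphism from the additive structure on $BU_{\oplus}$ to the tensor structure on $\BUloc{d}$. First, since taking adjoints is compatible with post-composition, $\psi_n = \Omega\tau_F \circ \alpha_n$, where $\alpha_n$ is the map from Lem.~\ref{lem:classifying_map_plus} classifying $[H] - [\C] \in \tilde{K}^0(\C P^{n-1})$. The problem thus reduces to computing $\Omega\tau_F$ on this class.

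Next I would identify $\Omega\tau_F$ as an $H$-space map. Because $\kappa \colon \BUpl \to \BUte{W}$ is monoidal with respect to the additive (source) and tensor (target) monoidal structures, $\lvert\kappa\rvert$ is a map of topological monoids, and applying $\Omega B$ gives an $H$-space map $BU_{\oplus} \times \Z \to \BUloc{d} \times \Z$ from the additive to the tensorial structure (where the $\Z$ factor on the right embeds as the virtual-dimension exponent $k \mapsto d^k$, cf.\ Rem.~\ref{rem:Z-component}). On unreduced classes it sends a rank $n$ bundle $V$ to $F(V)$ of rank $d^n$, since $B\kappa_n \colon BU(n) \to BU(W^{\otimes n})$ classifies $F$ applied to the universal bundle. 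Restricting to $0$-components recovers $\Omega\tau_F$.

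I would then decompose $[H] - [\C]$ inside the additive $H$-structure on $BU_{\oplus} \times \Z$ as $[H] + (-[\C])$, with $[H]$ in the $1$-component and $-[\C]$ in the $(-1)$-component, and apply the additive-to-tensor $H$-homomorphism to obtain
\[
	\Omega\tau_F([H] - [\C]) \;=\; \Omega\tau_F([H]) \otimes \Omega\tau_F([\C])^{-1} \;=\; [F(H)] \otimes [W]^{-1}.
\]
In the localised $K$-theory $K^0(\C P^{n-1})\bigl[\tfrac{1}{d}\bigr]$ the trivial rank $d$ bundle satisfies $[W] = d$, hence $[W]^{-1} = \tfrac{1}{d}$, and $\psi_n$ classifies the rank $1$ class $\tfrac{1}{d}F(H)$ as claimed.

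The hard step will be rigorously justifying the additive-to-tensor $H$-homomorphism property of $\Omega\tau_F$ together with the translation between $\pi_0$-components. One concrete route is to mirror the Bott-periodicity argument of Lem.~\ref{lem:classifying_map_plus} — via Harris's simplicial model — but applied after post-composition with $\kappa$, so that the factor $\tfrac{1}{d}$ appears naturally when reducing the rank $d$ class $[F(H)]$ from the $1$-component to the $0$-component via the tensor inverse of the basepoint $[W]$.
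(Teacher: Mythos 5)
Your proposal is correct and takes essentially the same route as the paper: reduce to Lemma~\ref{lem:classifying_map_plus}, then observe that $\Omega\tau_F$ carries the $\oplus$-structure to the $\otimes$-structure so that $[F(H)] = [\psi_n]\cdot[F(\C)]$ in $[\C P^{n-1}, \BUloc{d}\times\Z]$, giving $[\psi_n] = \tfrac{1}{d}F(H)$. The step you flag as hard is not: $\lvert\kappa\rvert$ is a map of topological monoids by construction, so $\Omega B\lvert\kappa\rvert$ is automatically an $H$-map between group-like $H$-spaces and preserves inverses up to homotopy -- no Harris-style bookkeeping is needed for this corollary (only Lemma~\ref{lem:classifying_map_plus} required tracing through the simplicial model).
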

\begin{proof}
	This is now a direct consequence of Lemma~\ref{lem:classifying_map_plus}. Note that $F(H)$ is classified by a map $\C P^{n-1} \to \BUloc{d} \times \{1\} \subset \BUloc{d} \times \Z$ (see Remark~\ref{rem:Z-component}) and we have $[F(H)] = [\psi_n] \cdot [F(\C)]$ as classes in $[\C P^{n-1}, \BUloc{d} \times \Z]$. 
\end{proof}

With Cor.~\ref{cor:classifying_map_tensor} we now have all the tools in place to compute the indecomposable terms in the rational characteristic classes of $\tau^n_F$ for a polynomial exponential functor $F$ in terms of the generators of $H^{\rm odd}(SU(n),\Q)$. 

\begin{theorem} \label{thm:char_classes}
	Let $n \in \N$, let $F \colon \Vfin \to \Vfin$ be a polynomial exponential functor and let $(b_1, \dots, b_k) \in \N^k$ be the rescaled Thoma parameters of its $R$-matrix. The class $\delta^{F,n} \in H^{\text{odd}}(SU(n),\Q)$ obtained from the rationalisation of the twist $\tau_F^n  \colon SU(n) \to B\BUloc{d}$ as in \eqref{eqn:delta} takes the form
	\[
		\delta^{F,n} = \kappa_1 a_3 + \kappa_2 a_5 + \dots + \kappa_{n-1} a_{2n-1} + r \in H^{\text{odd}}(SU(n),\Q)
	\]
	where $\kappa_i$ is the $i$th coefficient in the Taylor expansion of 
	\[
		\kappa(x) = \sum_{j=1}^k \log \left( \frac{1+b_je^x}{1+b_j} \right)
	\]
	and $r \in H^{\text{odd}}(SU(n),\Q)$ only contains decomposable terms. 
\end{theorem}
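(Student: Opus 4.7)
The plan is to probe $\delta^{F,n}$ using the map $f_n \colon S\C P^{n-1} \to SU(n)$ from Lemma~\ref{lem:map_detects_coh_classes}. Since $S\C P^{n-1}$ is a suspension, its positive-degree reduced cohomology carries the trivial cup product, so $f_n^*$ annihilates every decomposable element of $H^{\text{odd}}(SU(n),\Q)$. Combined with the identity $\sigma \circ f_n^*(a_{2k+1}) = t^k$, determining the coefficients $\kappa_1, \dots, \kappa_{n-1}$ in $\delta^{F,n}$ is reduced to computing $\sigma(f_n^*(\delta^{F,n})) \in H^{\text{even}}(\C P^{n-1},\Q)$ and reading off the coefficients of $t, t^2, \dots, t^{n-1}$.

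Next I would invoke Corollary~\ref{cor:classifying_map_tensor}: under the loop-suspension adjunction, $f_n^* \tau_F^n$ corresponds to the map $\psi_n \colon \C P^{n-1} \to \BUloc{d}$ classifying the virtual class $\tfrac{1}{d}[F(H)]$. Post-composing with the rationalisation $\BUloc{d} \to (BU_{\otimes})_{\Q} \simeq \prod_{k \geq 1} K(\Q,2k)$, which by construction of the equivalence in~(\ref{eqn:rational_equiv}) is the logarithm of the Chern character applied to rank-$1$ elements of $KU_{\Q}$, I obtain the class $\log(\ch(F(H))/d) \in H^{\text{even}}(\C P^{n-1},\Q)$. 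Under the suspension isomorphism the $2k$-th homogeneous component of this class is precisely $\sigma(f_n^*(\delta^{F,n}_{2k+1}))$, so it remains to compute $\log(\ch(F(H))/d)$ as a power series in $x = c_1(H) = t$.

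To make $F(H)$ explicit I would invoke Theorem~\ref{thm:classification_part_1}: there exist $W_1,\dots,W_k$ with $\dim W_j = b_j$ such that $F \funeq F^{W_1} \otimes \dots \otimes F^{W_k}$. A natural equivalence of functors on $\Viso$ induces a $U(1)$-equivariant isomorphism between standard fibres, hence an isomorphism of the associated bundles, giving $F(H) \cong \bigotimes_{j=1}^k F^{W_j}(H)$ as vector bundles over $\C P^{n-1}$. Since $H$ is a line bundle, all higher exterior powers vanish, so the definition~(\ref{eqn:mod_ext_power}) yields $F^{W_j}(H) = \C \oplus (W_j \otimes H)$, a bundle of rank $1+b_j$ and Chern character $1+b_je^x$. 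Multiplying and using $d = \dim F(\C) = \prod_{j}(1+b_j)$ gives
\[
 \ch(F(H))/d \;=\; \prod_{j=1}^k \frac{1+b_je^x}{1+b_j},
\]
whose logarithm is exactly $\kappa(x)$. Extracting the coefficient of $x^i$ (equivalently $t^i$) yields $\kappa_i$, which by the first paragraph is the coefficient of $a_{2i+1}$ in $\delta^{F,n}$ modulo decomposables.

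The main obstacle I foresee is bookkeeping around the identification $\BUloc{d} \to (BU_{\otimes})_{\Q} \simeq \prod K(\Q,2k)$ with $\log \circ \ch$ on the virtual class $[F(H)]/d$, and verifying its compatibility with the suspension-loop adjunction used to move between $\delta^{F,n}$ on $SU(n)$ and the Chern character computation on $\C P^{n-1}$. This rests on the infinite loop space description of the rationalisation of $BU_{\otimes}$ via the logarithmic cohomology operation (cf.\ \cite[Sec.~2.5]{paper:Rezk}), which is standard but needs to be traced through carefully. A minor technical point is that $f_n$ should be replaced by a based representative, which is harmless since $SU(n)$ is simply connected.
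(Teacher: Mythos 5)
Your proposal is correct and follows essentially the same route as the paper's own proof: probe $\delta^{F,n}$ with $f_n \colon S\C P^{n-1} \to SU(n)$, note that $f_n^*$ kills decomposables (so only the indecomposable coefficients survive) and use Lemma~\ref{lem:map_detects_coh_classes}, translate via Corollary~\ref{cor:classifying_map_tensor} into $\log\ch(\tfrac{1}{d}F(H))$, then factor $F$ via Theorem~\ref{thm:classification_part_1} and compute $\ch(F^{W_j}(H)) = 1 + b_j e^t$ using that $H$ is a line bundle, with $d = \prod_j (1+b_j)$. The extra care you flag around the identification of the rationalisation with $\log\circ\ch$ and its compatibility with the suspension-loop adjunction is exactly the part the paper relegates to the discussion following \eqref{eqn:rational_equiv} and the proof of Corollary~\ref{cor:classifying_map_tensor}.
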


\begin{proof}
Let $\tau_{F,\Q}^n \colon SU(n) \to (BBU)_{\Q}$ be the composition of $\tau_F^n$ with the natural map $B\BUloc{d} \to (BBU)_{\Q}$. As described in the paragraph after \eqref{eqn:rational_equiv} the class $\delta^{F,n}$ is represented by the map $B(\log\,\ch) \circ \tau_{F,\Q}^n$. By Lemma~\ref{lem:map_detects_coh_classes} the coefficient $\kappa_i$ agrees with the coefficient of $t^i$ of the class 
\[
	f_n^*\delta^{F,n} \in H^{\text{odd}}(S\C P^{n-1},\Q) \cong H^{\text{even}}(\C P^{n-1},\Q) \cong \Q[t]/(t^n)\ .
\]
By Cor.~\ref{cor:classifying_map_tensor} this class agrees with $\log\ch(\frac{1}{d}F(H))$. In particular, it only depends on the isomorphism class of $F(H)$. Let $F^{b_i} = F^{\C^{b_i}}$. We can now apply Thm.~\ref{thm:classification_part_1} to obtain 
\begin{align*}
	\log\ch(F(H)) &= \log\ch(F^{b_1}(H) \otimes \dots \otimes F^{b_k}(H)) \\
	&= \log \left(\ch(F^{b_1}(H)) \cdots \ch(F^{b_k}(H)) \right) = \sum_{i=1}^k \log \ch(F^{b_i}(H))
\end{align*}
Note that we have $d = (1 + b_1) \cdots (1+b_k)$. Thus, we are left with the computation of $\frac{1}{1+b_i} \ch(F^{b_i}(H))$. This turns out to be 
\[
	\frac{1}{1+b_i}\ch(F^{b_i}(H)) = \frac{1}{1+b_i}\left(\ch(\C \oplus \C^{b_i} \otimes \Lambda^1(H) \right) = \frac{1 + b_ie^t}{1+b_i}
\]
which proves the statement.
\end{proof}

\begin{corollary}
Let $F \colon \Vfin \to \Vfin$ be a polynomial exponential functor and let $(b_1, \dots, b_k)$ be the rescaled Thoma parameters of its associated $R$-matrix. For the characteristic classes $\delta^{F,2}$ and $\delta^{F,3}$ associated to the twists $\tau^F_2$ over $SU(2)$ and $\tau^F_3$ over $SU(3)$, respectively, we obtain 
\begin{align*}
	\delta^{F,2} &= \delta^{F,2}_3 = \sum_{i=1}^k \frac{b_i}{b_i+1}a_3 \ , \\
	\delta^{F,3} &= \delta^{F,3}_3 + \delta^{F,3}_5 = \sum_{i=1}^k \left(\frac{b_i}{b_i+1}a_3 + \frac{b_i}{2(b_i + 1)^2}a_5 \right)\ . 
\end{align*}
\end{corollary}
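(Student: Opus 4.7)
The proof is a direct application of Theorem~\ref{thm:char_classes} combined with a structural observation about the cohomology rings in question. The plan has essentially two steps.

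First I would verify that the remainder term $r$ vanishes in both cases. Recall that $H^*(SU(2),\Q) \cong \Lambda_{\Q}[a_3]$ and $H^*(SU(3),\Q) \cong \Lambda_{\Q}[a_3,a_5]$ are exterior algebras. Any decomposable odd-degree element would have to be a product of at least two of the generators, but $a_3^2 = a_5^2 = 0$ and the only other non-trivial product $a_3 a_5$ lives in the even degree $8$. Hence there are no non-zero decomposable classes in odd degree, forcing $r = 0$ in both $\delta^{F,2}$ and $\delta^{F,3}$.

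Second, I would compute the first two Taylor coefficients of the function
\[
\kappa(x) = \sum_{j=1}^k \log\!\left( \frac{1+b_j e^x}{1+b_j} \right)
\]
around $x = 0$. Differentiating gives $\kappa'(x) = \sum_{j=1}^k \tfrac{b_j e^x}{1+b_j e^x}$, so $\kappa_1 = \kappa'(0) = \sum_j \tfrac{b_j}{1+b_j}$. A second differentiation yields $\kappa''(x) = \sum_{j=1}^k \tfrac{b_j e^x}{(1+b_j e^x)^2}$, so $\kappa_2 = \tfrac{1}{2}\kappa''(0) = \sum_j \tfrac{b_j}{2(1+b_j)^2}$. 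Plugging these into Theorem~\ref{thm:char_classes} gives the stated formulas.

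There is no real obstacle here — the corollary is a worked example of the main theorem. The only substantive point (besides the Taylor computation, which is routine) is the vanishing of decomposable odd-degree classes, and this follows immediately from the exterior algebra structure on $H^*(SU(n),\Q)$ for $n \in \{2,3\}$. For $n \geq 4$ decomposable terms such as $a_3 a_5 a_7$ would appear in odd degree and the corresponding statement would require additional input beyond the function $\kappa(x)$.
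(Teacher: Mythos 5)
Your proof is correct and takes essentially the same approach as the paper: the published proof also argues that the exterior algebra structure on $H^*(SU(n),\Q)$ for $n\in\{2,3\}$ forces all decomposable classes into even degree (hence $r=0$), and leaves the routine Taylor computation implicit; your explicit differentiation of $\kappa$ matches the claimed coefficients.
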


\begin{proof}
The rational cohomology of $SU(2)$ is $H^*(SU(2),\Q) \cong \Lambda_\Q[a_3]$. Since all products are trivial in this ring, there are no decomposable terms. Likewise, $H^*(SU(3),\Q) \cong \Lambda_\Q[a_3,a_5]$. Therefore all decomposable expressions live in degree $8$, which is even. 
\end{proof}

Finally, it is also possible to treat the classical twists obtained from the basic gerbe over $SU(n)$ using the techniques developed above as explained in the next theorem.

\begin{theorem} \label{thm:class_of_classical_twist}
Let $\tau_{\det}^n \colon SU(n) \to BBU_{\otimes}$ be the twist associated to the exponential functor $\extp^{\rm top}	 \colon \Viso \to \Viso$ (see Rem.~\ref{rem:ext_power} and the beginning of this section). The associated class $\delta^{\det,n} \in H^{\rm odd}(SU(n),\Q)$ satisfies 
\[
	\delta^{\det,n} = a_3\ .
\]
\end{theorem}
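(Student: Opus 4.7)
The plan is to exploit the factorisation of $\tau_{\det}$ through $BBU(1)$ recorded in Remark~\ref{rem:ext_power} to show that $\delta^{\det,n}$ can only be a rational multiple of $a_3$, and then to pin down that multiple using the detection map $f_n$ of Lemma~\ref{lem:map_detects_coh_classes}.

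First I would observe that by Remark~\ref{rem:ext_power} the twist $\tau^n_{\det}$ factors as
\[
	SU(n) \to SU \simeq BBU_{\oplus} \to BBU(1) \to BBU_{\otimes}\ ,
\]
and that $BBU(1) \simeq K(\Z,3)$ since $BU(1) \simeq K(\Z,2)$. Rationalising, $BBU(1)_{\Q} \simeq K(\Q,3)$ has rational cohomology ring $\Lambda_{\Q}[u_3]$ (an exterior algebra on a single degree~$3$ generator). Consequently, the rationalised twist $\tau^n_{\det,\Q}$ factors through $K(\Q,3)$, and each component $\delta^{\det,n}_{2k+1}$ is pulled back from $H^{2k+1}(K(\Q,3),\Q)$. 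For $k \geq 2$ this group vanishes, so $\delta^{\det,n} = \delta^{\det,n}_3$ lies in $H^3(SU(n),\Q) = \Q\,a_3$. Hence $\delta^{\det,n} = c\,a_3$ for some $c \in \Q$; in particular there is no decomposable remainder to worry about.

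Next I would compute $c$ via the detection map. By Corollary~\ref{cor:classifying_map_tensor} the map $\psi_n$ adjoint to $f_n^*\tau^n_{\det}$ classifies $\tfrac{1}{d}\extp^{\rm top}(H)$; since $H$ is a line bundle, $\extp^{\rm top}(H) = H$, and $d = \dim(\extp^{\rm top}(\C)) = 1$, so $\psi_n$ simply classifies $H$. As in the proof of Theorem~\ref{thm:char_classes}, applying the suspension isomorphism $\sigma$ to $f_n^*\delta^{\det,n}$ gives
\[
	\sigma \circ f_n^*(\delta^{\det,n}) = \log \ch(H) = \log(e^t) = t \ \in \ H^2(\C P^{n-1},\Q)\ .
\]
On the other hand, Lemma~\ref{lem:map_detects_coh_classes} gives $\sigma \circ f_n^*(a_3) = t$, so $\sigma \circ f_n^*(c\,a_3) = c\,t$. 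Comparing these identities yields $c = 1$, and therefore $\delta^{\det,n} = a_3$, as claimed.

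The main obstacle is really the first step: one must be sure that rationally $BBU(1) \simeq K(\Q,3)$ has no odd-degree cohomology above degree~$3$, in order to eliminate a priori any decomposable correction terms that Theorem~\ref{thm:char_classes} would otherwise allow. Once that factorisation is in place, the coefficient computation via $f_n$ is then straightforward and runs exactly parallel to the polynomial case treated in Theorem~\ref{thm:char_classes}.
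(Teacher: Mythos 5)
Your proof is correct and follows essentially the same route as the paper's: compute $\sigma \circ f_n^*(\delta^{\det,n}) = \log\ch(\extp^{\rm top}(H)) = t$ via Corollary~\ref{cor:classifying_map_tensor} and Lemma~\ref{lem:map_detects_coh_classes} to fix the coefficient of $a_3$, and use the factorisation through $BBU(1)$ from Remark~\ref{rem:ext_power} to rule out any contributions above degree~$3$. The paper presents these two steps in the opposite order and more tersely; your write-out of why $BBU(1)_{\Q} \simeq K(\Q,3)$ kills everything in higher odd degree makes that second step explicit but adds no new ingredient.
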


\begin{proof}
We proceed as in the proof of the previous theorem. For $F = \extp^{\rm top}$ we obtain $F(H) \cong H$ and $d = 1$. Therefore 
\[
	\log\ch(F(H)) = \log \ch(H) = \log \exp(t) = t\ .
\]
Therefore $\kappa_1 = 1$. But since $\tau^n_{\det}$ factors through $BBU(1)$, the class $\delta^{\det,n}$ can only be non-trivial in degree~$3$.
\end{proof}


\begin{remark} \label{rem:coadjoint_orbits}
	Observe that for a fixed suspension coordinate $t$ the image of $f_n$ is the adjoint orbit of $\omega(t)$ in $SU(n)$ consisting of all matrices with one eigenvalue $\exp(2\pi i t\tfrac{n-1}{n})$ and $(n-1)$ eigenvalues of the form $\exp(-2 \pi i t\tfrac{1}{n})$. This can be interpreted in terms of the decomposition of $SU(n)$ as a simplicial space as in \cite[Eq.~(18)]{paper:Meinrenken}:  The suspension coordinate runs along one of the edges in the $1$-skeleton of that decomposition. Using a different edge we obtain analogous maps
	\[
		SGr_k^n \to SU(n)
	\] 
	where $Gr_k^n$ denotes the Grassmann manifold of complex $k$-dim.\ subspaces in $\C^n$. We will leave it as an open question whether these maps yield more information about the rational cohomology classes of $\tau^n_F$ if an analogous analysis as the one above is performed with them.
\end{remark}

\subsection{From twists to Fell bundles}
We end this paper with an outline of the operator algebraic description of the twists $\tau_F^n$ in terms of Fell bundles. For reasons of brevity we will only present a sketch here. The proofs of the claims made in this section will be part of upcoming work (see \cite{paper:EvansPennigFellBundles}). A Fell bundle consists of a groupoid $\cG$ together with a Banach bundle $\pi \colon \cE \to \cG$ over the space of morphisms of $\cG$ equipped with the following additional structure: Let $\cG^{(2)} \subset \cG \times \cG$ be the subspace of composable arrows and let 
\[
	\cE^{(2)} = \{ (e_1,e_2) \in \cE \times \cE \ |\ (\pi(e_1),\pi(e_2)) \in \cG^{(2)}  \}\ .
\]
The Fell bundle $\cE$ carries a bilinear and associative multiplication map $\cE^{(2)} \to \cE$ denoted by a dot and an operator $\ast \colon \cE \to \cE$ inverting the underlying arrows of $\cG$. The norm on the fibres should be submultiplicative and satisfy the $C^*$-condition $\lVert e^* \cdot e \rVert = \lVert e \rVert^2$. As a consequence the fibres of $\cE$ over the objects of $\cG$ are $C^*$-algebras and provide bimodules for the corresponding source and target algebras over the non-identity fibres. If these bimodules are Morita equivalences, the Fell bundle is called saturated. For the full details of the definition we refer the reader to \cite{paper:BussMeyerZhu, paper:Kumjian}. In case $\cG = Y^{[2]}$ is the pair groupoid of a fibration $Y \to X$ over a topological space $X$ and $\cE \to \cG$ is a line bundle the construction boils down to that of a bundle gerbe in the sense of \cite{paper:MurrayStevenson}.

Our construction is now based on the same idea as in \cite{paper:MurrayStevenson}: For $g \in SU(n)$ let $\EV{g} \subset S^1$ be the set of distinct eigenvalues of $g$. Given $\lambda \in \EV{g}$ let $\Eig{g}{\lambda} \subset \C^n$ be the eigenspace of $g$ with respect to~$\lambda$. Fix a total order on $S^1 \setminus \{1\}$ and define
\[
	Y = \{ (g,z) \in SU(n) \times (S^1 \setminus \{1\})\ |\ z \notin \EV{g}  \}\ .
\]   
The fibre product $Y^{[2]} = \{ (g,z_1,z_2) \in SU(n) \times (S^1 \setminus \{1\})\ |\ z_1,z_2 \notin \EV{g}\}$ decomposes into three disjoint subspaces: the space $Y^{[2]}_+$ consisting of those triples $(g,z_1,z_2)$ with $z_1 < z_2$ and at least one eigenvalue of $g$ between $z_1$ and~$z_2$, the space $Y^{[2]}_0$ containing all triples with no eigenvalues between $z_1$ and~$z_2$ and $Y^{[2]}_-$ obtained by interchanging the role of $z_1,z_2$ in $Y^{[2]}_+$.

Let $F \colon \Viso \to \Viso$ be an exponential functor and let $M_F = \Endo{F(\C^n)}^{\otimes \infty}$ be the infinite tensor product of the full matrix algebra $\Endo{F(\C^n)}$. This is a uniformly hyperfinite $C^*$-algebra. In particular, it is strongly self-absorbing. Define $\cE^{+} \to Y^{[2]}_+$ fibrewise as follows:
\[
	\cE^{+}_{(g,z_1,z_2)} = \bigotimes_{z_1 < \lambda < z_2 \atop \lambda \in \EV{g}} F(\Eig{g}{\lambda}) \otimes M_F\ .
\]
Each fibre is a right Hilbert $M_F$-module where $M_F$ acts by right multiplication on the tensor factor $M_F$. Let $W = \Eig{g}{\lambda} \subset \C^n$. There are natural isomorphisms $F(\C^n) \cong F(W \oplus W^{\perp}) \cong F(W) \otimes F(W^{\perp})$ induced by the exponential structure of $F$. As a consequence the infinite UHF-algebra $M_F$ absorbs the $C^*$-algebra
\[
	\bigotimes_{z_1 < \lambda < z_2 \atop \lambda \in \EV{g}}\Endo{F(\Eig{g}{\lambda})}
\]
and there is an isomorphism $\mathcal{K}_{M_F}(\cE^{+}_{(g,z_1,z_2)}) \cong M_F$, where the left hand side denotes the compact $M_F$-linear operators on the Hilbert module $\cE^{+}_{(g,z_1,z_2)}$. This structure turns $\cE^{+}_{(g,z_1,z_2)}$ into an $M_F$-$M_F$-Morita equivalence. We have that $\cE^{+} \to Y^{[2]}_+$ is a Banach bundle and the isomorphisms described above can be chosen in such a way that they form a bilinear associative multiplication 
\[
	\cE^{+}_{(g,z_1,z_2)} \times \cE^{+}_{(g,z_2,z_3)} \to \cE^{+}_{(g,z_1,z_3)}
\]
for all $(g,z_1,z_2), (g,z_2,z_3) \in Y^{[2]}_+$. Moreover, if we define 
\[
	\cE^{0} = Y^{[2]}_0 \times M_F \qquad \text{and} \qquad \cE^{-}_{(g,z_1,z_2)} = \left(\cE^{+}_{(g,z_2,z_1)}\right)^{\rm op}\ ,
\]
then the multiplication extends to the disjoint union $\cE \to Y^{[2]}$, is still bilinear and associative and turns $\cE$ into a saturated Fell bundle over $\cG = Y^{[2]}$. Choosing a Haar system on $\cG$ we can now look at the $C^*$-algebra $C^*(\cE)$ of sections of $\cE$. Pulling back continuous functions from $SU(n)$ to $Y^{[2]}$ we see that each $f \in C(SU(n))$ acts as a mutiplier on $C^*(\cE)$ and as such provides a central element. This turns $C^*(\cE)$ into a continuous $C(SU(n))$-algebra, whose fibres are all isomorphic to $M_F \otimes \K$, where $\K$ denotes the compact operators. The algebra $C^*(\cE)$ satisfies the Fell condition stated in \cite[Def.~4.1]{paper:DadarlatPennigI}. Thus, by \cite[Thm.~4.2]{paper:DadarlatPennigI} it is isomorphic to the section algebra of a locally trivial bundle of $C^*$-algebras over $SU(n)$ with fibre $M_F \otimes \K$. But since  
\[
	[SU(n), B\Aut{M_F \otimes \K}] \cong \left[SU(n), B\BUloc{d}\right]
\]
by \cite{paper:DadarlatPennigII} (with $d = \dim(F(\C))$) the bundle obtained from $C^*(\cE)$ corresponds to a twist, which we claim agrees with $\tau^n_F$. The upshot of this construction is that the adjoint action of $SU(n)$ on itself lifts to an action of $SU(n)$ on $\cE$ in a canonical way, since $h \in SU(n)$ maps $\Eig{g}{\lambda}$ to $\Eig{hgh^{-1}}{\lambda}$ mimicking the equivariant bundle gerbe in the classical case \cite{paper:MurrayStevenson}.

\bibliographystyle{plain}
\bibliography{ExponentialFunctors}

\newpage
\appendix 
\section{An $\I$-monoid model for $\BUloc{d}$}
In this appendix we present a model for the infinite loop space $\BUloc{d}$ as a commutative $\I$-monoid. It provides an alternative to the model via permutative categories used in Sec.~\ref{sec:twists_via_mon_cats}. We include it here, since it might be of independent interest.

Let $\I$ be the category with objects the finite sets $\mathbf{n} = \{1,\dots,n\}$ (including the empty set $\mathbf{0}$) and injective maps between them as morphisms. This category is symmetric monoidal: The tensor product is given by concatenation $\mathbf{m} \sqcup \mathbf{n} = \{1, \dots, m+n \}$, where $\mathbf{m}$ corresponds to the first $m$ entries and $\mathbf{n}$ to the last $n$. The symmetry isomorphisms are the obvious shuffle maps $s_{m,n} \colon \mathbf{m} \sqcup \mathbf{n} \to \mathbf{n} \sqcup \mathbf{m}$. Let $\mathcal{T}op_{\ast}$ be the category of pointed topological spaces and basepoint preserving continuous maps.

An \emph{$\I$-space} is a functor $X \colon \I \to \mathcal{T}op_{\ast}$. An $\I$-monoid is an $\I$-space $X$ together with a natural transformation
\[
	\begin{tikzcd}
		\mu \colon X \times X \ar[r,Rightarrow] & X \circ \sqcup
	\end{tikzcd}
\]
given by maps $\mu_{m,n} \colon X(\mathbf{m}) \times X(\mathbf{n}) \to X(\mathbf{m} \sqcup \mathbf{n})$ that satisfy associativity and unitality conditions in the sense that the basepoint of $X(\mathbf{0})$ acts as a unit for $\mu$. An $\I$-monoid is called \emph{commutative} if the following diagram commutes
\[
	\begin{tikzcd}[column sep=1.8cm]
		X(\mathbf{m}) \times X(\mathbf{n}) \ar[d,"\text{tw}" left] \ar[r,"\mu_{m,n}"] & X(\mathbf{m} \sqcup \mathbf{n}) \ar[d,"X(s_{m,n})"] \\
		X(\mathbf{n}) \times X(\mathbf{m}) \ar[r,"\mu_{n,m}" below] & X(\mathbf{n} \sqcup \mathbf{m})
	\end{tikzcd}
\]
For details about this construction we refer the reader to \cite{paper:SagaveSchlichtkrull, paper:Schlichtkrull}. Let $W$ be a complex inner product space with $d = \dim(W)$ and define 
\[
	\Bte{W}(\mathbf{n}) = BU(W^{\otimes n})\ .
\] 
A permutation $\sigma \colon \mathbf{n} \to \mathbf{n}$ acts on $W^{\otimes n}$ by permuting the tensor factors. The adjoint action of the corresponding unitary yields $\Bte{W}(\sigma)$. For the proper inclusion $\iota \colon \mathbf{m} \to \mathbf{n}$ we define $\Bte{W}(\iota) \colon BU(W^{\otimes m}) \to BU(W^{\otimes n})$ to be the continuous map induced by the group homomorphism 
\[
	\begin{tikzcd}[column sep=1.8cm]
		U(W^{\otimes m}) \ar[r,"u \mapsto u \otimes 1"] & 
		U(W^{\otimes m} \otimes W^{\otimes (n-m)})) = U(W^{\otimes n}) 		
	\end{tikzcd}\ .
\]
Any morphism $f \colon \mathbf{m} \to \mathbf{n}$ can be written as $\sigma \circ \iota$, where $\sigma \colon \mathbf{n} \to \mathbf{n}$ is the permutation that maps $1,\dots, m$ to $f(1), \dots, f(m)$ and fills the gaps with the element $m+1, \dots, n$ in ascending order. It is straightforward to see that the definition $\Bte{W}(f) := \Bte{W}(\sigma) \circ \Bte{W}(\iota)$ indeed yields a functor. This fixes $\Bte{W} \colon \I \to \mathcal{T}op_{\ast}$ completely and gives it the structure of an $\I$-space. It can be equipped with an $\I$-monoid structure, where the multiplication
\[
	\mu_{m,n}^{\otimes} \colon \Bte{W}(\mathbf{m}) \times \Bte{W}(\mathbf{n}) \to \Bte{W}(\mathbf{m} \sqcup \mathbf{n})
\]
is induced by the tensor product of unitaries. With respect to these definitions $\Bte{W}$ is a commutative $\I$-monoid \cite{paper:SagaveSchlichtkrull, paper:Schlichtkrull}. However, it is not convergent. We will determine the homotopy type of $(\Bte{W})_{h\I}$ in the next lemma. Let $\mathcal{N}$ be the poset $\N$ considered as a category with a unique arrow from $m$ to $n$ if and only if $m \leq n$. Note that $\mathcal{N}$ embeds into $\I$ by sending $n \in \mathcal{N}$ to $\mathbf{n} \in \I$. and the arrow $m \to n$ in $\mathcal{N}$ to the embedding $\iota \colon \mathbf{m} \to \mathbf{n}$ in $\I$ as defined above. We define 
\[
	\left(\Bte{F}\right)_{\infty} = \hocolim_{\mathcal{N}}\left(\Bte{F}\right)\ .
\]

\begin{lemma} \label{lem:htpy_BtensorW}
	Let $W \in \obj{\Vfin}$ and let $d = \dim(W)$. We have 
	\[
		\left(\Bte{W}\right)_{\infty} \simeq \BUloc{d}
	\] 
	and the continuous map $\left(\Bte{W} \right)_{\infty} \to \left(\Bte{W}\right)_{h\I}$ is a homotopy equivalence.
\end{lemma}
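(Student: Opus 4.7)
The plan is to prove the two assertions in turn. For the first equivalence I would start by noting that $\mathcal{N}$ is a linearly ordered poset, so $(\Bte{W})_\infty = \hocolim_\mathcal{N} \Bte{W}|_\mathcal{N}$ is naturally weakly equivalent to the mapping telescope of the sequence
\[
	BU(W) \to BU(W^{\otimes 2}) \to BU(W^{\otimes 3}) \to \dots
\]
whose transition maps are induced by $u \mapsto u \otimes 1_d$. The identification with $\BUloc{d}$ then proceeds by the same strategy used in the proof of Lemma~\ref{lem:htpy_type_BUF}: one compares each $BU(W^{\otimes n})$ with a stabilised version $BU_{\infty}(W^{\otimes n})$ via the upper-left-hand-corner inclusion, obtains in the colimit a telescope of copies of $BU$ whose transition maps classify direct sums of $d$ copies of the universal bundle and therefore induce multiplication by $d$ on positive-degree homotopy groups, and invokes Sullivan's localisation \cite[Thm.~2.2]{paper:Sullivan} to conclude that the colimit is $\BUloc{d}$.

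For the second assertion, the comparison map $(\Bte{W})_\infty \to (\Bte{W})_{h\I}$ is the one induced by the inclusion $\iota \colon \mathcal{N} \hookrightarrow \I$. This inclusion is not cofinal in the classical sense; for each $\mathbf{n} \in \I$ the relevant comma category decomposes as a disjoint union indexed by the choice of injection $\mathbf{n} \to \mathbf{m}$ and so fails to be contractible, so a direct cofinality argument is unavailable. The key structural observation I would use is that the $\Sigma_n$-action on $\Bte{W}(\mathbf{n}) = BU(W^{\otimes n})$ is induced by conjugation with the permutation unitaries $U_\sigma \in U(W^{\otimes n})$. Since $U(W^{\otimes n})$ is path-connected, a continuous path from $\id{W^{\otimes n}}$ to $U_\sigma$ yields a path of inner automorphisms, and hence a homotopy on classifying spaces from the identity to $\Bte{W}(\sigma)$. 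Thus the $\Sigma_n$-action on $\Bte{W}(\mathbf{n})$ is homotopically trivial for every $n$.

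With this in hand, my plan for the cofinality step is to invoke a general fact along the following lines: if $X \colon \I \to \mathcal{T}op_{\ast}$ is a commutative $\I$-monoid whose $\Sigma_n$-actions on $X(\mathbf{n})$ are homotopically trivial for every $n$, then the comparison map $X_\infty \to X_{h\I}$ is a weak equivalence. Schematically, the obstruction to $\iota$ being cofinal lies entirely in the $\Sigma_n$-actions, and their homotopical triviality allows the $\hocolim$ over $\I$ to collapse onto the $\hocolim$ over $\mathcal{N}$. Such a statement can be extracted—or proved directly via a two-sided bar construction—from the $\I$-space machinery of Sagave and Schlichtkrull \cite{paper:SagaveSchlichtkrull, paper:Schlichtkrull}. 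Applied to $X = \Bte{W}$, this yields the desired equivalence $(\Bte{W})_\infty \simeq (\Bte{W})_{h\I}$.

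The main obstacle will be making the cofinality step rigorous. The symmetric group actions are only homotopically trivial, not strictly so, and one cannot simply replace $\I$ by $\mathcal{N}$ as the indexing category. The natural approach is a diagram-replacement: produce a weakly equivalent $\I$-diagram on which the $\Sigma_n$-actions are strictly trivial, so that the replacement factors through the projection $\I \to \mathcal{N}$; cofinality of this projection—or a direct analysis of the resulting bar construction—then yields the statement.
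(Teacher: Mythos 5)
Your first step (identifying $(\Bte{W})_\infty$ with $\BUloc{d}$ via the mapping telescope and Sullivan localisation) matches the paper, which simply cites Lemma~\ref{lem:htpy_type_BUF} at this point. The gap is in the second step. The observation that each $\Bte{W}(\sigma)$ is individually homotopic to the identity --- because it is conjugation by a unitary in the path-connected group $U(W^{\otimes n})$ --- is correct, but the ``general fact'' you then invoke, that pointwise homotopical triviality of the $\Sigma_n$-actions forces $X_\infty \to X_{h\I}$ to be an equivalence, is not a result in \cite{paper:SagaveSchlichtkrull, paper:Schlichtkrull} and does not hold in that generality. Pointwise triviality only says each $\sigma$ acts by a self-map homotopic to the identity; it does not say the $\Sigma_n$-space $\Bte{W}(\mathbf{n})$ is equivariantly equivalent to one with a trivial action, and it certainly does not supply the coherence data required to rectify the diagram to one factoring through the projection $\I \to \mathcal{N}$. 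Your final paragraph acknowledges exactly this obstruction but offers no route to carry out the replacement, and without it the argument has not been reduced to a cofinality check.

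The paper avoids this by taking a different, checkable route. It constructs a rigidified projection model $\mathcal{B}^{\mathcal{P}}_\otimes(\mathbf{n}) = \mathcal{P}_{q^{\otimes n}}\bigl((\operatorname{End}(W) \otimes \K)^{\otimes n}\bigr)$, compares it to $\Bte{W}$ through an intermediate $\I$-monoid $\mathcal{B}^{EU}_\otimes$ via levelwise equivalences, and verifies the hypotheses of \cite[Thm.~3.1 and Thm.~3.3]{paper:AdemGomezLindTillmann}: all structure maps are closed cofibrations of Banach manifolds, and $\mu_{m,n}(p_1,p_2)$ lies in the image of a proper structure map if and only if $p_1$ or $p_2$ does. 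These are freeness conditions on the multiplication, not statements about triviality of the symmetric-group actions, and together they yield $(\Bte{W})_\infty \simeq (\Bte{W})_\infty^+ \simeq (\Bte{W})_{h\I}$, with simple connectivity of $\BUloc{d}$ disposing of the plus construction. To fill the hole in your proposal you would either have to prove the cofinality statement you are conjecturing --- which would need coherence hypotheses considerably stronger than pointwise homotopical triviality and amounts to a new theorem --- or fall back on verifying the cofibration and freeness conditions as the paper does.
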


\begin{proof}
	Since $\left(\Bte{W}\right)_{\infty}$ agrees up to homotopy equivalence with the path component labelled by $1 \in \Z$ of the space $\lvert \BUte{W} \rvert_{\infty}$ that we considered in Lemma~\ref{lem:htpy_type_BUF}, the first statement of the lemma is clear. 
	
	We will prove the second statement by showing that there exists a model for $BU(W^{\otimes n})$ which satisfies the conditions of \cite[Thm.~3.1 and Thm.~3.3]{paper:AdemGomezLindTillmann}. Combining these theorems we obtain that
	\[
		\left(\Bte{W} \right)_{\infty} \simeq \left(\Bte{W} \right)_{\infty}^+ \simeq \left(\Bte{W}\right)_{h\I}\ ,
	\]
	where we used that $\BUloc{d}$ is simply connected to obtain the first equivalence. Let $H$ be a separable infinite-dimensional Hilbert space, denote by $\K$ the compact operators on $H$ and 	let $e \in \K$ be a projection of rank~$1$. The unitary group $U(W \otimes H)$ endowed with the norm topology is contractible by Kuiper's theorem. Identify $W$ with the subspace $W \otimes eH \subset W \otimes H$. The space 
	\[
		U(W \otimes H)/U(W) \times U(W^\perp)
	\]
	is a model for $BU(W)$. The map $u \mapsto u(\id{W} \otimes e)u^*$ induces a homeomorphism
	\begin{equation} \label{eqn:homeo_proj}
		U(W \otimes H)/U(W) \times U(W^{\perp}) \cong \Proj{\id{W} \otimes e}{\text{End}(W) \otimes \K}		
	\end{equation}
	where the path component of $\id{W} \otimes e$ in the space of self-adjoint projections in the $C^*$-algebra $\text{End}(W) \otimes \K$ is equipped with the norm topology. This space is a Banach manifold. Let $q = \id{W} \otimes e$ and 
	\[
		\mathcal{B}_{\otimes}^{\mathcal{P}}(\mathbf{n}) = \Proj{q^{\otimes n}}{(\text{End}(W) \otimes \K)^{\otimes n}}\ . 
	\]
	Note that $\mathcal{B}_{\otimes}^{\mathcal{P}}(\mathbf{0}) = \{\id{\C}\}$. A permutation $\sigma \colon \mathbf{n} \to \mathbf{n}$ acts on $(W \otimes H)^{\otimes n}$ by permuting the tensor factors. Thus, it acts on $\mathcal{B}_{\otimes}^{\mathcal{P}}(\mathbf{n})$ via the corresponding adjoint action. This defines $\mathcal{B}_{\otimes}^{\mathcal{P}}(\sigma)$. Let $\iota \colon \mathbf{m} \to \mathbf{n}$ be the inclusion that identifies $\mathbf{m}$ with the first $m$ entries of~$\mathbf{n}$. Then $\mathcal{B}_{\otimes}^{\mathcal{P}}(\iota)$ maps $p \in \mathcal{B}_{\otimes}^{\mathcal{P}}(\mathbf{m})$ to $p \otimes q^{\otimes (n - m)} \in \mathcal{B}_{\otimes}^{\mathcal{P}}(\mathbf{n})$. As in the construction above this turns $\mathcal{B}_{\otimes}^{\mathcal{P}}$ into an $\I$-space. It is a commutative $\I$-monoid with respect to the multiplication
	\[
		\mu_{m,n} \colon \mathcal{B}_{\otimes}^{\mathcal{P}}(\mathbf{m}) \times \mathcal{B}_{\otimes}^{\mathcal{P}}(\mathbf{n}) \to \mathcal{B}_{\otimes}^{\mathcal{P}}(\mathbf{m} \sqcup \mathbf{n}) \quad , \quad \mu_{m,n}(p_1,p_2) = p_1 \otimes p_2\ .
	\]
	For $f \colon \mathbf{m} \to \mathbf{n}$ the induced map $\mathcal{B}_{\otimes}^{\mathcal{P}}(f)$ embeds $\mathcal{B}_{\otimes}^{\mathcal{P}}(\mathbf{m})$ into $\mathcal{B}_{\otimes}^{\mathcal{P}}(\mathbf{n})$ as a closed submanifold. All of these maps are cofibrations by \cite[Thm.~7]{paper:Palais}. Moreover, if $p_1 = p_2 \otimes e \in \K \otimes \K$ is a projection it follows that $p_2$ has to be a projection as well. Therefore, $\mu_{m,n}(p_1,p_2)$ is in the image of $\mathcal{B}_{\otimes}^{\mathcal{P}}(f)$ for a strict inclusion $f$ if and only if $p_1$ or $p_2$ is. Thus, $\mathcal{B}_{\otimes}^{\mathcal{P}}$ satisfies the hypotheses of \cite[Thm.~3.3]{paper:AdemGomezLindTillmann}. 
	
	We will now construct an $\I$-monoid $\mathcal{B}_{\otimes}^{EU}$ together with $\I$-monoid morphisms $\Bte{W} \to \mathcal{B}_{\otimes}^{EU}$ and $\mathcal{B}_{\otimes}^{\mathcal{P}} \to \mathcal{B}_{\otimes}^{EU}$ that are object-wise homotopy equivalences. Let
	\[
		\mathcal{B}_{\otimes}^{EU}(\mathbf{n}) = EU((W \otimes H)^{\otimes n})/U(W^{\otimes n}) \times U((W^{\otimes n})^{\perp})\ ,
	\]
	 where $W^{\otimes n}$ embeds as a direct summand into $(W \otimes H)^{\otimes n}$ via $e$ like above. The adjoint action of the permutation of tensor factors defines $\mathcal{B}_{\otimes}^{EU}(\sigma)$ for all $\sigma \colon \mathbf{n} \to \mathbf{n}$ and the group homomorphism $U((W \otimes H)^{m}) \to U((W \otimes H)^{n})$ given by $u \mapsto u \otimes \id{(W \otimes H)^{\otimes (n-m)}}$ induces $\mathcal{B}_{\otimes}^{EU}(\iota)$. The $\I$-monoid structure is induced by the homomorphism
	 \[
	 	U((W\otimes H)^{\otimes n}) \times U((W\otimes H)^{\otimes m}) \to U((W\otimes H)^{\otimes (n+m)}) 
	 \]
	 given by taking the tensor product of two unitaries. Note in particular, that this maps the subgroup $U(W^{\otimes n}) \times U((W^{\otimes n})^{\perp}) \times U(W^{\otimes m}) \times U((W^{\otimes m})^{\perp})$ into $U(W^{\otimes (n +m)}) \times U((W^{\otimes (n+m)})^{\perp})$, since these are the stabilisers with respect to the adjoint action of the projections $q^{\otimes n}$, $q^{\otimes m}$ and $q^{\otimes (n+m)}$, respectively. This structure turns $\mathcal{B}_{\otimes}^{EU}$ into a commutative $\I$-monoid. Using the homeomorphism \eqref{eqn:homeo_proj} each space $\mathcal{B}_{\otimes}^{EU}(\mathbf{n})$ yields a locally trivial bundle 
	 \[
	 	\mathcal{B}^\mathcal{P}_{\otimes}(\mathbf{n}) \to \mathcal{B}^{EU}_{\otimes}(\mathbf{n}) \to BU((W \otimes H)^{\otimes n}) 
	 \] 
	 where the first map is the inclusion of the fibre over the basepoint. It is straightforward to check that the homeomorphism \eqref{eqn:homeo_proj} transforms the commutative $\I$-monoid structure on the fibre of $\mathcal{B}^{EU}_{\otimes}(\mathbf{n}) \to BU((W \otimes H)^{\otimes n})$ to the one on $\mathcal{B}^\mathcal{P}_{\otimes}(\mathbf{n})$. Moreover, the group $U((W \otimes H)^{\otimes n})$ is contractible (again by Kuiper's theorem). Therefore the map $\mathcal{B}^\mathcal{P}_{\otimes}(\mathbf{n}) \to \mathcal{B}^{EU}_{\otimes}(\mathbf{n})$ is a homotopy equivalence for all $\mathbf{n} \in \obj{\I}$. The embedding $U(W^{\otimes n}) \to U((W \otimes H)^{\otimes n})$ induces a map $EU(W^{\otimes n}) \to EU((W\otimes H)^{\otimes n})$, which yields a homotopy equivalence
	 \[
	 	EU(W^{\otimes n})/U(W^{\otimes n}) \to EU((W \otimes H)^{\otimes n})/U(W^{\otimes n}) \times U((W^{\otimes n})^{\perp})
	 \]
	 Therefore we obtain homotopy equivalences
	 \[
	 	\Bte{W}(\mathbf{n}) = BU(W^{\otimes n}) \to \mathcal{B}^{EU}_{\otimes}(\mathbf{n})\ ,
	 \]
	 for all $\mathbf{n} \in \obj{\I}$. These are compatible with all maps defining the commutative $\I$-monoid structure.
\end{proof}

\end{document}